%% file: Time_Fractional.tex
\documentclass[a4paper]{article}

\title{Non-uniform $\alpha$-Robust Alikhanov Mixed FEM with Optimal Convergence for the Time-Fractional Allen--Cahn Equation}

\author{
Abhinav Jha\footnote{Department of Mathematics, Indian Institute of Technology Gandhinagar, Palaj, Gandhinagar, 382055, Gujarat, India \texttt{abhinav.jha@iitgn.ac.in}},
Samir Karaa\footnote{Department of Mathematics, Sultan Qaboos University, Al-Khod 123, Muscat, Oman \texttt{skaraa@squ.edu.om}},
Aditi Tomar\footnote{TU Wien, Institut für Analysis und Scientific Computing, Wiedner Hauptstr. 8--10, 1040 Wien, Austria \& Department of Mathematics, Indian Institute of Technology Gandhinagar, Palaj, Gandhinagar, 382055, Gujarat, India \texttt{aditi.tomar@asc.tuwien.ac.at}, \texttt{aditi.tomar@iitgn.ac.in}}
}

\input{packages}

\usepackage[a4paper, total={6.5in, 8.5in}]{geometry}
\usepackage{mathrsfs}

\begin{document}

\maketitle 

\begin{abstract}
We investigate a mixed finite element method for the spatial discretization of a time-fractional Allen–Cahn equation defined on a convex polyhedral domain, combined with a nonuniform Alikhanov scheme for the temporal approximation. Under suitable regularity assumptions on the initial data that are weaker than those typically imposed in the literature, we establish regularity results for the solution and its flux. We then derive optimal $L^2$-error estimates, up to a logarithmic factor, for both the solution and the flux. The estimates are robust with respect to the fractional order \(\alpha\), in the sense that the associated constants remain bounded as \(\alpha\rightarrow 1^{-}\). Numerical experiments are presented to confirm the theoretical findings. 
\end{abstract}

\medskip
\textbf{Keywords}: Caputo fractional derivative; Alikhanov method; graded mesh; discrete fractional Gr\"{o}nwall inequality; regularity results; error analysis.
 \section{Introduction}\label{section1}
Classical phase-field models are diffuse interface models that have found numerous applications
in diverse research areas, e.g., hydrodynamics \cite{MR1609626, MR1984386}, material sciences \cite{shah2016efficient}, and multi-phase
flow \cite{MR4889514} and several others. The classical Allen-Cahn equation was initially introduced by Allen and Cahn \cite{allen1979microscopic} to model the dynamics of antiphase boundaries in crystalline solids. Since then, it has evolved into a versatile mathematical framework with a wide range of applications, including the behavior of vesicle membranes and the processes of solid nucleation and phase separation in two incompressible fluids.

Let $J=(0,\;T]$ denote a time interval, where $0<T<\infty$, and let $\Omega\subset\mathbb{R}^d$ for $(d\geq 1)$ be a convex polygonal or polyhedral bounded domain. In this paper, we focus on phase field models that involve time nonlocality. More specifically, we consider the following time-fractional Allen--Cahn model \cite{liu2018time, du2020time}:
\begin{align}\label{main}
\begin{cases}
\partial^{\alpha}_{t} u - \kappa^2 \Delta u  \;=\; -F^\prime(u):=f(u)  & \quad \text{in} \quad \Omega\times J,\\[5pt]
u ~= ~0 &\quad \text{on}\quad \partial\Omega\times J, \\[5pt]
u(\boldsymbol{x},\;0) ~= ~u_0(\boldsymbol{x}) &\quad \text{in} \quad \Omega.
\end{cases}
\end{align}
Here, $u$ denotes the phase of material, the Caputo time-fractional derivative $\partial_t^\alpha$ of order $\alpha\; (0<\alpha < 1)$ is defined as 
$\partial_t^{\alpha} u(t) = \mathcal{I}^{1-\alpha} u_t(t)$, where 
 $\mathcal{I}^{1-\alpha}$ represents the Riemann--Liouville fractional integral of order $1-\alpha$, defined for sufficiently regular $\varphi$ by
\[
\mathcal{I}^{1-\alpha}\varphi(t)
=\int_0^t \omega_{1-\alpha}(t-s)\varphi(s)\,ds,\qquad
\omega_{\beta}(t):=\frac{t^{\beta-1}}{\Gamma{(\beta)}},
\] and $\Gamma(\cdot)$ is the Gamma function.
Moreover,  the constant $0 < \kappa <1$ is the interaction length that describes the thickness of the transition boundary between materials. The nonlinear term $F(u):=\frac{1}{4}(1-u^2)^2$ is the interficial (or potential) energy.

Several numerical approaches have been proposed for the temporal discretization of time-fractional derivatives, including the widely used L1 method and its variants \cite{MR3639581, huang2020optimal, TOMAR2024137, MR3790081, wang2022local}, as well as higher-order schemes such as the Alikhanov method \cite{liao2020second, MR4402734, MR3273144, MR3904430, MR4270344}. In the present work, we adopt the Alikhanov scheme for the temporal discretization of problem (\ref{main}), owing to its higher-order accuracy and favorable stability properties. Since solutions of time-fractional problems typically exhibit weak singular behavior near the initial time, the use of graded temporal meshes with refined time steps near $t=0$ has proven particularly effective in capturing this initial-layer behavior. 

The analytical and numerical properties of problem (\ref{main}) have been investigated in several earlier studies. In particular, the well-posedness of the problem and its limited smoothing properties were established in \cite{du2020time}, where the authors developed a family of unconditionally stable time-stepping schemes based on backward Euler convolution quadrature (BE-CQ) for the fractional derivative. By combining a discrete fractional Gr\"{o}nwall inequality with maximal 
$p$-regularity arguments, they derived convergence results without requiring additional regularity assumptions on the exact solution. In the study by Huang and Stynes \cite{huang2020optimal}, the authors utilized the L1 method to discretize the temporal fractional derivative on a graded mesh, combined with a standard finite element discretization of the spatial diffusion term where $u_0 \in H_0^1(\Omega) \cap H^4(\Omega)$. They also applied Newton's linearization to address the nonlinear driving term and demonstrated optimal-rate convergence of the solution in the \( H^1 \) norm. Furthermore, in another work \cite{MR4402734}, they analyzed the Alikhanov method for discretizing the temporal fractional derivative on a graded mesh. In this case, they again employed a standard finite element discretization for the spatial diffusion term and used Newton's method to linearize the nonlinear driving term, achieving optimal-rate convergence of the solution in the \( L^\infty(H^1(\Omega)) \) space, given that $u_0 \in H_0^1(\Omega) \cap H^6(\Omega)$. In \cite{liao2020second}, a second-order, non-uniform time-stepping scheme is presented for problem (\ref{main}), utilizing FDM in the spatial direction. The authors demonstrated that the proposed scheme preserves the discrete maximum principle.
In \cite{MR4499606}, the author applied an adaptive L1 time-stepping scheme to solve problem (\ref{main}) and established a $L^2$-norm error estimate. In the study by Wang et al. \cite{wang2022local}, the authors utilized a nonuniform L1 scheme for the temporal direction and employed the LDG method for the spatial direction. They proved the \( L^2 \) stability and optimal error estimates for these two schemes using discrete fractional Gr\"{o}nwall-type inequalities. 

Beyond deterministic models, stochastic variants of the time-fractional Allen--Cahn equation have also been studied. In particular, Al-Maskari and Karaa \cite{MR4530741} analyzed the strong approximation of a stochastic time-fractional Allen–Cahn equation driven by additive fractionally integrated Gaussian noise. Their approach employed a piecewise linear finite element method for spatial discretization and a classical Grünwald–Letnikov approximation for the temporal discretization of both the Caputo fractional derivative and the associated fractional integral operator, with the stochastic forcing approximated via an 
$L^2$-projection. Table~\ref{Regularity_assumptions} gives a comprehensive comparison of the existing literature on $\alpha$-robust error estimates for fully discrete methods for the problem (\ref{main}). By $\alpha$-robustness, we mean that all estimates and constants are uniformly bounded as $\alpha$ approaches $1^-$.

\begin{table}[ht]
\centering
\begingroup
\footnotesize
\setlength{\tabcolsep}{3pt}
\renewcommand{\arraystretch}{1.3}

\begin{tabular}{|p{0.6cm} |p{4.1cm} |p{2.1cm}| p{1.9cm} |p{1.98cm}| p{1.07cm}|}
\hline
\textbf{Ref.} & \textbf{Regularity }& \textbf{Full} & \textbf{Nonlinearity} & \textbf{Initial} & \textbf{$\alpha$} \\
& \textbf{Requirement ($k\ge 1$)}& \textbf{Discretization} &  & \textbf{Condition} & \textbf{Robust} \\
\hline\hline

\cite{huang2020optimal} &
$\|u^{(k)}\|_{H^2(\Omega)} \le C(1+t^{\alpha-k})$ &
L1--FEM &
Newton linearization &
$H_0^1(\Omega)\cap H^4(\Omega)$ &
No \\

\cite{liao2020second} &
$\|u^{(k)}\|_{W^{2,\infty}(\Omega)} \le C(1+t^{\sigma-k})$, $\sigma\in(0,1)\cup(1,2)$ &
Alikhanov--FDM &
Implicit &
$H_0^1(\Omega)\cap H^4(\Omega)$ &
No \\

\cite{MR4499606} &
$\|u\|_{W^{4,\infty}}\le C$, $\|u^{(k)}\|_{W^{0,\infty}}\le C(1+t^{\sigma-k})$, $\sigma\in(0,1)\cup(1,2)$ &
Adaptive L1 &
Simple fixed-point algorithm &
$H_0^1(\Omega)\cap H^4(\Omega)$ &
No \\

\cite{wang2022local} &
$|u^{(k)}|\le C(1+t^{\alpha-k})$ &
L1--LDG, Alikhanov--LDG &
Implicit &
$H_0^1(\Omega)\cap H^4(\Omega)$ &
No \\

\cite{MR4402734} &
$\|u^{(k)}\|_{H^4(\Omega)} \le C(1+t^{\alpha-k})$ &
Alikhanov--FEM &
Newton linearization &
$H_0^1(\Omega)\cap H^6(\Omega)$ &
Yes \\

\hline
\end{tabular}

\caption{Relevant articles on fully discrete methods for the time-fractional Allen--Cahn equation and the minimum regularity requirement, where $u^{(k)}$ denotes the $k$th time derivative.}
\label{Regularity_assumptions}

\endgroup
\end{table}

 This paper aims to address this gap by deriving error estimates for a non-uniform Alikhanov mixed FEM applied to the problem (\ref{main}), and $u_0\in H_0^1(\Omega) \cap H^{3+\epsilon}(\Omega)$ with $0<\epsilon\leq1$. The main contributions of the present work are as follows:
\begin{itemize}
    \item New regularity results is discussed, with proofs provided in Section~\ref{section2}.
    
\item A modified discrete fractional Gr\"{o}nwall inequality is derived under a reasonable time-step restriction, which allows us to obtain optimal convergence of the proposed method.
     \item An optimal order error estimate $\mathcal{O}(\log_e(N)(h^{2} + N^{-2} ))$ (see, Theorem \ref{L2H1errortheorem_AM}) is derived, with respect to $L^2$-norm for the solution $u$ and its flux $\boldsymbol{\sigma}$ of the problem (\ref{main}) with under the assumption $u_0\in H_0^1(\Omega) \cap H^{3+\epsilon}(\Omega)$ with $0<\epsilon\leq1$, where $N+1$ denotes the number of grid points in the temporal direction and $h$ is the maximum diameter of finite elements.   
      \item $\alpha$-robustness is ensured, i.e., all estimates and bounds maintain validity as $\alpha$ approaches $1^-$.
      \item Numerical results to validate theoretical findings.
\end{itemize}

The subsequent sections of the paper are outlined as follows: In Section~\ref{section2}, well-posedness and regularity results are established for the problem (\ref{main}), along with some significant results for subsequent analysis. The variational formulation of the problem (\ref{main}) is introduced, and a non-uniform Alikhanov mixed FEM is proposed in Section~\ref{section3}. Error estimates of the proposed method are derived in Section~\ref{section4}. Theoretical findings are validated through numerical experiments in Section~\ref{section6}. Finally, the article concludes in Section~\ref{section7}.

In this paper, the symbols \( C \) and \( c \) consistently represent a positive generic constant. It is important to note that this constant may vary throughout the text and is independent of both the temporal grid size \( \Delta t \) and the spatial mesh size \( h \). Additionally, \( \mathbb{N}_0 \) denotes the set of nonnegative integers, which means \( \mathbb{N}_0 = \mathbb{N} \cap \{0\} \).

%
\section{Well-posedness of solutions and Regularity results}\label{section2}

%
\se 
Let $(\cdot,\cdot)$ denote the usual inner product in $L^2(\Omega)$ with induced norm  $\|\cdot\|$. We denote by $H^s(\Omega)$ the standard Sobolev space on $\Omega$
with the usual norm $\|\cdot\|_{H^s(\Omega)}$ for real $s$.
Let $u$ be a solution of problem  \eqref{main}. Multiplying the equation in \eqref{main} by $u_t$ and integrating over $\Omega$, we obtain 
$$
({\cal I}^{1-\alpha}u_t,u_t)+\frac{\kappa^2}{2}\frac{d}{dt}\|\nabla u\|^2+\frac{d}{dt}\int_\Omega F(u)\,dx=0.
$$
If $u_0\in H_0^1(\Omega)$, then  integration  in time over $(0,t)$  yields
\begin{align}\label{2.7}
\int_0^t({\cal I}^{1-\alpha}u_t(s),u_t(s))\,ds+V(u(t))= V(u_0), \qquad  0\leq t<\infty,
\end{align}
where $V(u)=\frac{\kappa^2}{2}\|\nabla u\|^2+\int_\Omega F(u)\,dx$ is the free energy functional. By the positivity property of the integral $\int_0^t({\cal I}^{1-\alpha}u_t(s),u_t(s))\,ds$ and
 the Sobolev embedding of $H^1(\Omega)$ into $L^{4}(\Omega)$, \eqref{2.7} immediately implies an a priori bound: If $u_0\in H^1_0(\Omega)$ with $\|\nabla u_0\|\leq R$, then there exits $R^\ast=C(R)>0$ such that
\begin{align}\label{2.8}
\|\nabla u(t)\|\leq R^\ast, \qquad  0\leq t<\infty.
\end{align}
This type of a priori bound is known to play a key role in establishing global existence of solutions in the classical case with $\alpha=1$ and also in the fractional-order case $0<\alpha<1$, see \cite{MR4530741}. 

Let $\{(\lambda_j,\phi_j)\}_{j=1}^\infty$ be the Dirichlet eigenpairs of $A:=-\Delta$ on $\Omega$,
with $\{\phi_j\}_{j=1}^\infty$ being an orthonormal basis in $L^2(\Omega)$. 
For  $r\geq -1$, we denote by  $\dot H^r(\Omega)\subset H^{-1}(\Omega)$ the Hilbert space consisting of the functions $v=\sum_{j=1}^\infty  \langle v,\phi_j\rangle\phi_j$, where $\langle \cdot,\cdot\rangle$ denotes the duality pairing between $H^{-1}(\Omega)$ and $H_0^1(\Omega)$, and it coincides with the $L^2$-inner product if the function $v\in L^2(\Omega)$. The induced 
 norm $ \|\cdot\|_{\dot H^r(\Omega)}$ is defined by
$\|v\|_{\dot H^r(\Omega)}^2=\sum_{j=1}^\infty \lambda_j^r \langle v,\phi_j\rangle^2.$ 
Thus, 
 $\|v\|_{\dot H^0(\Omega)}=\|v\|$ is the norm in $L^2(\Omega)$, 
$\|v\|_{\dot H^1(\Omega)}$ is the norm in $H_0^1(\Omega)$, and $\|v\|_{H_0^1(\Omega) \cap H^{2}(\Omega)}=\|A v\|$ is the equivalent norm in $H^2(\Omega)\cap H^1_0(\Omega)$ \cite{MR2249024}. Besides, it is easy to verify that $\|v\|_{\dot H^{-1}(\Omega)}=\|v\|_{ H^{-1}(\Omega)}$ is the norm in $H^{-1}(\Omega)$.
Note that the spaces $\dot H^r(\Omega)$, $r\geq -1$, form a Hilbert scale of interpolation spaces.

By means of Laplace transforms, the solution of problem \eqref{main} can be represented as (cf. \cite[Section 2]{al2019numerical})
\begin{align}\label{form-1}
u(t)=F(t)u_0+\int_0^t E(t-s)f(u(s))\,ds,\quad t>0,
\end{align}
where the solution operators $F(t):L^2(\Omega)\to L^2(\Omega)$ and $E(t):L^2(\Omega)\to L^2(\Omega)$ are defined by
$$
 F(t) = \frac{1}{2\pi i}\int_{\Gamma_{\theta,\delta}}e^{zt} z^{\alpha-1}(z^\alpha I+A)^{-1} \,dz,
\quad 
E(t) = \frac{1}{2\pi i}\int_{\Gamma_{\theta,\delta}}e^{zt}(z^\alpha I+A)^{-1} \,dz,
$$
with integration over a  contour $\Gamma_{\theta,\delta}\subset \mathbb{C}$ (oriented with an increasing imaginary part): 
$$
\Gamma_{\theta,\delta}=\{\rho e^{\pm i\theta}:\rho\geq \delta\}\cup\{\delta e^{i\psi}: |\psi|\leq \theta\},
$$
 for $\theta\in (\pi/2,\pi)$ and  $\delta> 0$. Then we have the following properties.
\begin{lemma}[{\cite{MR4290515}}]\label{LL}
Let $\mu\in [0,2]$. The operators $F(t)$ and $E(t)$ satisfy
\begin{itemize}
\item[\namedlabel{itmLL:a}{(a)}] $F(t):L^2(\Omega)\to \dot{H}^2(\Omega)$ is continuous with respect to $t\in (0,T]$,  $I-F(t)=\int_0^t AE (s)ds,\;$ and \\ $\;t^{k}\|A^{\mu/2}F^{(k)}(t)\|  \leq ct^{-\mu\alpha/2}\;$  and $\;t^{k+1}\|A^{-\mu/2}F^{(k+1)}(t)\|  \leq ct^{\mu\alpha/2}$, $k=0,1,2,\ldots,K$ for $K \in \mathbb{N}_0$.
\item[\namedlabel{itmLL:b}{(b)}] $E(t): L^2(\Omega)\to \dot{H}^2(\Omega)$ is continuous with respect to $t\in (0,T]$ and \\
$t^{1-\alpha}\|E(t)\|+ t^{2-\alpha}\|E'(t)\|+
t^{\alpha(\mu/2-1)+1}\|A^{\mu/2}E(t)\|\leq c  \; \forall t\in (0,T].$
\end{itemize}
\end{lemma}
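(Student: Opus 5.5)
The plan is to work spectrally, reducing every operator bound to a scalar Mittag-Leffler estimate, and to use the contour representation only where it is more convenient. Since $A$ is self-adjoint and positive with eigenpairs $(\lambda_j,\phi_j)$, we have
\[
F(t)v=\sum_j E_{\alpha,1}(-\lambda_j t^\alpha)(v,\phi_j)\phi_j, \qquad E(t)v=\sum_j t^{\alpha-1}E_{\alpha,\alpha}(-\lambda_j t^\alpha)(v,\phi_j)\phi_j .
\]
These follow because $z^{\alpha-1}(z^\alpha+\lambda)^{-1}$ and $(z^\alpha+\lambda)^{-1}$ are the Laplace transforms of these scalar functions. Each operator norm in the statement is then a supremum over $\lambda>0$ of a scalar quantity. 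For the time-derivative bounds I will work directly with the contour integrals. For $t>0$ I choose $\delta=1/t$ and use the resolvent estimate
\[
\|(z^\alpha+A)^{-1}\|\le c|z|^{-\alpha}, \qquad z\in\Sigma_\theta,
\]
valid since $\theta<\pi$ gives $z^\alpha$ in the sector of angle $\alpha\theta<\pi$. Combined with interpolation $\|A^{\mu/2}(z^\alpha+A)^{-1}\|\le c|z|^{\alpha(\mu/2-1)}$ for $\mu\in[0,2]$, this follows from $\|A(z^\alpha+A)^{-1}\|=\|I-z^\alpha(z^\alpha+A)^{-1}\|\le c$.

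\textbf{Part (a).} Differentiating under the integral gives $F^{(k)}(t)=\frac{1}{2\pi i}\int e^{zt}z^{k+\alpha-1}(z^\alpha+A)^{-1}dz$. Hence
\[
\|A^{\mu/2}F^{(k)}(t)\|\le c\int_{\Gamma}e^{\Re z\,t}|z|^{k-1+\alpha\mu/2-\alpha}\,|dz|\le ct^{-k-\alpha\mu/2+\alpha}\cdot(\ldots).
\]
This bound needs care: the power of $A$ must be absorbed properly. The correct bound $t^{-k-\mu\alpha/2}$ comes from the estimate $\|A^{\mu/2}z^{\alpha-1}(z^\alpha+A)^{-1}\|\le c|z|^{\alpha\mu/2-1}$. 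After substituting $z=s/t$, the integral is $\int e^{\Re s}|s|^{k-1+\alpha\mu/2}|ds|\,t^{-k-\alpha\mu/2}$, which is finite and bounded uniformly in $\alpha\in(0,1)$.

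For the negative-power estimate, for $k+1\ge1$ I use
\[
A^{-\mu/2}z^{k+\alpha}(z^\alpha+A)^{-1}=z^{k}\,A^{-\mu/2}\bigl(I-A(z^\alpha+A)^{-1}\bigr)\cdot\ldots,
\]
or more cleanly the bound $\|A^{-\mu/2}z^\alpha(z^\alpha+A)^{-1}\|\le c|z|^{\alpha\mu/2}$. This follows from the spectral form $\sup_\lambda \lambda^{-\mu/2}|z|^\alpha/|z^\alpha+\lambda|\le c|z|^{\alpha\mu/2}$, checked separately for $\lambda\le|z|^\alpha$ and $\lambda\ge|z|^\alpha$, although the low-frequency part requires $\lambda\ge\lambda_1>0$. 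The contour integral then gives $t^{-k-1+\alpha\mu/2}$.

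The identity $I-F(t)=\int_0^tAE(s)\,ds$ follows from the resolvent identity
\[
z^{-1}I-z^{\alpha-1}(z^\alpha+A)^{-1}=z^{-1}A(z^\alpha+A)^{-1},
\]
together with the fact that division by $z$ corresponds to integration in time. Continuity into $\dot H^2$ follows from uniform convergence of the contour integral on compact subsets of $(0,T]$.

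\textbf{Part (b).} This is analogous. The bound $\|A^{\mu/2}(z^\alpha+A)^{-1}\|\le c|z|^{\alpha(\mu/2-1)}$ gives $\|A^{\mu/2}E(t)\|\le ct^{\alpha(1-\mu/2)-1}$; the case $\mu=0$ gives $t^{\alpha-1}$. One extra factor $z$ gives the bound for $E'$.

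\textbf{Main obstacle.} The delicate point is the $\alpha$-robustness of the constants, i.e.\ keeping them bounded as $\alpha\to1^-$, and handling the negative-power bounds uniformly. The sector constant in the resolvent estimate degenerates only as $\alpha\theta\to\pi$. I would fix $\theta$ close to $\pi/2$, for example $\theta=3\pi/5$, so that $\alpha\theta\le 3\pi/5$ for all $\alpha<1$. The integrals $\int e^{\Re s}|s|^{\beta}|ds|$ are then bounded uniformly for $\beta$ in compact ranges.
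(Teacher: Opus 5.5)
Your positive-power bounds in (a), the bounds in (b), and the choice of a fixed $\theta$ for $\alpha$-uniform constants are fine. The negative-power estimate $t^{k+1}\|A^{-\mu/2}F^{(k+1)}(t)\|\le ct^{\mu\alpha/2}$, however, does not follow from your argument. The multiplier bound you settle on, $\|A^{-\mu/2}z^\alpha(z^\alpha+A)^{-1}\|\le c|z|^{\alpha\mu/2}$, is true (with a $\lambda_1$-dependent constant). But inserting it into the contour integral with $\delta=1/t$ gives
\[
\|A^{-\mu/2}F^{(k+1)}(t)\|\le c\int_{\Gamma_{\theta,1/t}}e^{t\,\mathrm{Re}\,z}\,|z|^{k+\alpha\mu/2}\,|dz| = c\,t^{-k-1-\alpha\mu/2},
\]
that is, $t^{k+1}\|A^{-\mu/2}F^{(k+1)}(t)\|\le ct^{-\mu\alpha/2}$. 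The exponent has the wrong sign, and the bound is even worse than the case $\mu=0$; your sentence ``the contour integral then gives $t^{-k-1+\alpha\mu/2}$'' is a slip. No sharper bound on the same symbol can rescue this. For $\lambda=\lambda_1$ fixed and $|z|\to\infty$ one has $\lambda^{-\mu/2}|z|^\alpha/|z^\alpha+\lambda|\to\lambda_1^{-\mu/2}$, so $A^{-\mu/2}z^\alpha(z^\alpha+A)^{-1}$ does not decay along the contour at all.

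The missing idea is the one you wrote down and then abandoned. Split $z^\alpha(z^\alpha+A)^{-1}=I-A(z^\alpha+A)^{-1}$ and note that the identity part contributes nothing: $\int_{\Gamma_{\theta,\delta}}e^{zt}z^k\,dz=0$ for $t>0$, because the integrand is entire and $e^{zt}$ decays exponentially in the region $|\arg z|\ge\theta$, so the contour may be closed there. Hence
\[
F^{(k+1)}(t)=-\frac{1}{2\pi i}\int_{\Gamma_{\theta,\delta}}e^{zt}z^kA(z^\alpha+A)^{-1}\,dz=-AE^{(k)}(t).
\]
Your interpolation bound with $\mu$ replaced by $2-\mu$, namely $\|A^{1-\mu/2}(z^\alpha+A)^{-1}\|\le c|z|^{-\alpha\mu/2}$, now gives $c\int e^{t\,\mathrm{Re}\,z}|z|^{k-\alpha\mu/2}|dz|=ct^{-k-1+\alpha\mu/2}$, as required, with an $\alpha$-uniform constant independent of $\lambda_1$.

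The same relation $F'=-AE$ is also what you need for $I-F(t)=\int_0^tAE(s)\,ds$. Since $\|AE(s)\|$ behaves like $s^{-1}$, this integral does not converge absolutely in operator norm, so the formal ``division by $z$'' argument is not enough. Instead, integrate $F'=-AE$ over $(\epsilon,t)$ and let $\epsilon\to0^+$, using $F(\epsilon)v\to v$ strongly.
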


We will frequently use the following lemma, which generalizes the classical Gronwall's inequality (see \cite{MR1122059}).
\begin{lemma}\label{Gronwall}
Assume that $y$ is a nonnegative function in $L^1(0,T)$ which satisfies
$$
y(t) \leq g(t)+\beta\int_0^t(t-s)^{-\alpha}y(s)\,ds\quad \mbox{ for } t\in (0,T],
$$
where $0\leq g(t)\in L^1(0,T)$, $\beta\geq 0$, and $0<\alpha<1$. Then there exists a constant $C_T$ such that 
$$
y(t) \leq g(t)+ C_T\int_0^t(t-s)^{-\alpha}g(s)\,ds\quad \mbox{ for }  
t\in (0,T].
$$
\end{lemma}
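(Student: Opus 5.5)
The plan is to prove Lemma~\ref{Gronwall} by iterating the integral inequality and summing the resulting Neumann series of convolution kernels. Throughout, $\ast$ denotes convolution on $(0,t)$. Write $k(t)=\beta t^{-\alpha}$ and $(Ky)(t)=\int_0^t k(t-s)y(s)\,ds$. The hypothesis reads $y\le g+Ky$ with $K$ a positive operator, since $k\ge 0$.

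\textbf{Step 1: iterate.} Applying the inequality to itself $n$ times gives
\[
y\le g+\sum_{j=1}^{n-1}K^j g+K^n y .
\]
The kernel of $K^j$ is the $j$-fold convolution $k^{\ast j}$. Using $t^{-\alpha}=\Gamma(1-\alpha)\,\omega_{1-\alpha}(t)$ and the semigroup property $\omega_a\ast\omega_b=\omega_{a+b}$, this kernel is
\[
k^{\ast j}(t)=\big(\beta\Gamma(1-\alpha)\big)^j\,\omega_{j(1-\alpha)}(t)
=\frac{\big(\beta\Gamma(1-\alpha)\big)^j}{\Gamma(j(1-\alpha))}\,t^{j(1-\alpha)-1}.
\]

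\textbf{Step 2: the remainder vanishes.} Once $j(1-\alpha)\ge 1$, the kernel is bounded on $[0,T]$ by
\[
M_j=\frac{\big(\beta\Gamma(1-\alpha)\big)^j\,T^{j(1-\alpha)-1}}{\Gamma(j(1-\alpha))}.
\]
Then $|K^n y(t)|\le M_n\|y\|_{L^1(0,T)}$. Because $\Gamma(j(1-\alpha))$ grows superexponentially, $M_n\to 0$, so $K^n y\to 0$ uniformly on $(0,T]$. Letting $n\to\infty$ yields
\[
y\le g+\sum_{j\ge 1}K^j g .
\]

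\textbf{Step 3: factor out one $t^{-\alpha}$.} The goal is to write
\[
\sum_{j\ge1}k^{\ast j}(t)\le C_T\,t^{-\alpha}\qquad\text{for } t\in(0,T].
\]
Each term is
\[
k^{\ast j}(t)=\beta t^{-\alpha}\cdot\frac{\big(\beta\Gamma(1-\alpha)\big)^{j-1}\,\Gamma(1-\alpha)}{\Gamma(j(1-\alpha))}\,t^{(j-1)(1-\alpha)} .
\]
The factor multiplying $\beta t^{-\alpha}$ is bounded on $(0,T]$ by
\[
\frac{\Gamma(1-\alpha)\big(\beta\Gamma(1-\alpha)T^{1-\alpha}\big)^{j-1}}{\Gamma(j(1-\alpha))}.
\]
Summing over $j$ gives a Mittag-Leffler-type series $\sum_j z^{j-1}/\Gamma(j(1-\alpha))$. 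This series converges for every $z$, since $1/\Gamma(j(1-\alpha))$ decays faster than any geometric sequence; the $j=1$ term is harmless because $\Gamma(1-\alpha)$ is finite. Setting $C_T$ equal to $\beta$ times this sum, monotone convergence lets us interchange sum and integral, and we obtain
\[
y(t)\le g(t)+C_T\int_0^t(t-s)^{-\alpha}g(s)\,ds .
\]
All integrals involved are finite a.e.: $g\in L^1$ and the kernel is in $L^1$, so Young's inequality applies.

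The main obstacle is Step 2. Proving $K^n y\to 0$ for a general $L^1$ function $y$ requires the explicit Beta-function convolution identity together with factorial growth of Gamma. A careful bound is needed for small $j$, where $j(1-\alpha)<1$ and the kernel is singular. The plan is to handle those finitely many terms directly via their $L^1$ norms, and use sup bounds only once $j(1-\alpha)\ge1$. Note that $C_T$ depends on $\alpha$, $\beta$ and $T$. This is acceptable here, since the lemma only asserts existence of the constant.
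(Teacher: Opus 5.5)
Your proposal is correct and complete. The paper does not prove this lemma itself; it only quotes it from the literature. Your argument is the standard proof of this weakly singular Gr\"onwall inequality:
\begin{itemize}
\item iterate $y\le g+Ky$ using positivity of $K$;
\item identify $k^{\ast j}=(\beta\Gamma(1-\alpha))^j\omega_{j(1-\alpha)}$ via the semigroup property of $\omega$;
\item make the remainder $K^ny$ vanish by the uniform bound once $n(1-\alpha)\ge 1$;
\item sum the kernels after pulling out one factor $t^{-\alpha}$.
\end{itemize}
This yields the explicit constant $C_T=\beta\Gamma(1-\alpha)\sum_{k\ge 0} z^k/\Gamma((k+1)(1-\alpha))$ with $z=\beta\Gamma(1-\alpha)T^{1-\alpha}$.

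Two cosmetic remarks. First, the concern in your last paragraph about small $j$ is unnecessary: Step 2 only involves $n\to\infty$, and Step 3 already handles every $j\ge 1$ uniformly through the factor $t^{(j-1)(1-\alpha)}\le T^{(j-1)(1-\alpha)}$. Second, since $y$ is only in $L^1(0,T)$, all the inequalities should be read almost everywhere, which is enough for the iteration because $K$ integrates.
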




%
%

We now recall the following well-posedness result for problem~\eqref{main}.

\begin{theorem}\label{WP}\cite[Theorem 6.20]{MR4290515}
    If $u_0\in H_0^1(\Omega) \cap H^{2}(\Omega)$,  then the solution $u$ to problem  \eqref{main}  satisfies for any $\beta\in [0,1)$ and 
    $t\in J$
\begin{equation}\label{BG-1}
u\in C^{\alpha}(\bar{J};L^2(\Omega)) 
\cap   C(\bar{J};\dot{H}^{2}(\Omega)),\quad \partial_t^\alpha u \in C(\bar{J};L^{2}(\Omega));
\end{equation}
\begin{equation}\label{BG-2}
 Au\in C(J;H^{2\beta}(\Omega)),\quad \|A^{1+\beta}u(t)\|_{L^2(\Omega)}\leq ct^{-\beta\alpha};
\end{equation}
 \begin{equation}\label{BG-3}
 \partial_t u(t)\in C(J;H^{2\beta}(\Omega))\quad \text{ and } \quad \|A^{\beta}\partial_t u(t)\|_{L^2(\Omega)}\leq ct^{\alpha(1-\beta)-1},
 \end{equation}
where the constant c depends on $\|A u_0\|_{L^2(\Omega)}$  and $T$.
\end{theorem}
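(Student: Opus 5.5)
The proof rests on the mild formulation \eqref{form-1}, $u(t)=F(t)u_0+\int_0^tE(t-s)f(u(s))\,ds$, together with the smoothing estimates of Lemma~\ref{LL} and the weakly singular Gr\"onwall inequality of Lemma~\ref{Gronwall}. The plan has three steps.

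\textbf{Step 1: existence and uniform bounds.} We first get a local mild solution by a contraction argument in $C([0,T_0];\dot H^1(\Omega))$. For $d\le3$, the map $u\mapsto f(u)=u-u^3$ is locally Lipschitz from $\dot H^1(\Omega)$ into $L^2(\Omega)$, since $H^1\hookrightarrow L^6$. The energy identity \eqref{2.7} gives the a priori bound \eqref{2.8}, so $\|\nabla u(t)\|\le R^\ast$, and the local solution extends to all of $\bar J$. In particular
\[
\|f(u(t))\|\le C(R^\ast)\quad\text{and}\quad \|f(u(t))-f(u(s))\|\le C\|u(t)-u(s)\|_{\dot H^1(\Omega)}
\]
uniformly on $\bar J$.

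\textbf{Step 2: the $\dot H^2$ bound.} Apply $A$ to \eqref{form-1}. Lemma~\ref{LL}(a) with $\mu=0$ gives $\|AF(t)u_0\|=\|F(t)Au_0\|\le c\|Au_0\|$. For the Duhamel term, $\|AE(t-s)\|\le c(t-s)^{-1}$ is not integrable, so we cannot use it directly. Instead write
\[
\int_0^tAE(t-s)f(u(s))\,ds=\int_0^tAE(t-s)\bigl(f(u(s))-f(u(t))\bigr)\,ds+(I-F(t))f(u(t)),
\]
using $I-F(t)=\int_0^tAE(s)\,ds$ from Lemma~\ref{LL}(a). This reduces the problem to Hölder continuity of $f(u)$ in $L^2$, i.e.\ of $u$ in $\dot H^1$.

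To get that continuity, we bound the time increments of $u$ using $F'$ and $E'$, the estimate $\|A^{1/2}E(t)\|\le ct^{\alpha/2-1}$, and the derivative estimates in Lemma~\ref{LL}. This should give $\|u(t)-u(s)\|_{\dot H^1}\le C(t-s)^{\gamma}s^{-\gamma'}$ for some $\gamma>0$. Inserting this into the splitting produces an integrable kernel. Gr\"onwall-type closing with Lemma~\ref{Gronwall} then yields $\sup_t\|Au(t)\|\le C$. Continuity in time follows from the continuity statements in Lemma~\ref{LL}, giving $u\in C(\bar J;\dot H^2(\Omega))$.

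\textbf{Step 3: time derivative and higher regularity.} Differentiate \eqref{form-1} in $t$:
\[
u_t=F'(t)u_0+E(t)f(u_0)+\int_0^tE(t-s)f'(u(s))u_t(s)\,ds .
\]
The source terms satisfy
\[
\|A^\beta F'(t)u_0\|=\|A^{\beta-1}F'(t)Au_0\|\le ct^{\alpha(1-\beta)-1}
\]
(Lemma~\ref{LL}(a) with $\mu=2(1-\beta)$), and $\|A^\beta E(t)\|\le ct^{\alpha(1-\beta)-1}$ (Lemma~\ref{LL}(b) with $\mu=2\beta$), where we use $f(u_0)\in L^2$. Because $u\in L^\infty(\dot H^2)\subset L^\infty$, the factor $f'(u)$ is bounded, so $\|f'(u)u_t\|\le C\|u_t\|$. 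Lemma~\ref{Gronwall} applied to $y=t^{1-\alpha}\|u_t\|$ first gives the case $\beta=0$, i.e.\ $\|u_t(t)\|\le ct^{\alpha-1}$. The weakly singular integral then yields \eqref{BG-3} for every $\beta<1$; the restriction $\beta<1$ is what keeps $\int_0^t(t-s)^{\alpha(1-\beta)-1}s^{\alpha-1}\,ds$ finite.

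The remaining statements follow from these. Hölder regularity $u\in C^\alpha(\bar J;L^2)$ comes from integrating $\|u_t\|\le ct^{\alpha-1}$. The identity $\partial_t^\alpha u=-Au+f(u)$ gives $\partial_t^\alpha u\in C(\bar J;L^2)$. For \eqref{BG-2}, apply $A^{1+\beta}$ to \eqref{form-1}, bound the $u_0$ term by $ct^{-\beta\alpha}$ via Lemma~\ref{LL}(a) with $\mu=2\beta$, and treat the Duhamel term by the same subtraction trick. This requires $A^{\beta}f(u)$ to be controlled; since $f(u)\in H^1$ is not in $\dot H^{2\beta}$ for large $\beta$ without compatibility, I would use the $t^{-1}$-type singular bound with the Hölder continuity of $f(u)$ in $L^2$ instead.

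I expect the main obstacle to be Step 2. The kernel $\|AE(t)\|\sim t^{-1}$ is non-integrable, so the bound hinges on establishing the Hölder continuity of $f(u)$ with a rate compatible with this singularity, and doing so uniformly up to $t=0$.
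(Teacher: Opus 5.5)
Steps 1--3 are the standard mild-solution argument: the subtraction trick for $Au$ and the singular Gr\"onwall inequality for $u_t$. They are sound as a sketch. The paper gives no proof of its own; it only cites the book and remarks that everything rests on the bound $u\in L^\infty(\Omega\times J)$ from the book's Theorem~6.19. You instead produce that bound from \eqref{2.8}, the $\dot H^2$ estimate and $H^2(\Omega)\hookrightarrow L^\infty(\Omega)$. That is a legitimate alternative, but it tacitly restricts you to $d\le 3$, as does $H^1\hookrightarrow L^6$ in Step~1. One small fix: apply Lemma~\ref{Gronwall} to $y=\|u_t\|$ itself, whose kernel is $(t-s)^{\alpha-1}$. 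For $y=t^{1-\alpha}\|u_t\|$ the inequality is not of the form the lemma requires.

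The genuine gap is in \eqref{BG-2}. Your subtraction trick leaves the term $A^{\beta}(I-F(t))f(u(t))=A^{\beta}f(u(t))-A^{\beta}F(t)f(u(t))$. Its first piece is finite only if $f(u(t))\in\dot H^{2\beta}(\Omega)$. Your fallback, a singular bound for $A^{1+\beta}E$ plus time-H\"older continuity of $f(u)$ in $L^2$, cannot replace this. To see why, take the linear problem with $u_0=0$ and a time-independent source $g\in L^2(\Omega)\setminus\dot H^{2\beta}(\Omega)$. Its mild solution is $u(t)=A^{-1}(I-F(t))g$, so $A^{1+\beta}u(t)=A^{\beta}g-A^{\beta}F(t)g\notin L^2(\Omega)$ for every $t>0$, even though $g$ is perfectly smooth in time.

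Spatial regularity of $f(u)$ is therefore indispensable, and the compatibility issue you worry about does not arise here. Since $f(0)=0$ and $u(t)\in H_0^1(\Omega)$, also $f(u(t))\in H_0^1(\Omega)$. Combining $u(t)\in\dot H^2(\Omega)\cap L^\infty(\Omega)$, $\nabla u(t)\in L^4(\Omega)$ and $\Delta u^3=6u|\nabla u|^2+3u^2\Delta u$ gives $\|Af(u(t))\|\le c$ uniformly in $t$. This is exactly the remark the paper makes right after the theorem.

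With that bound, no subtraction is needed. By Lemma~\ref{LL}(b) with $\mu=2\beta$,
$\|A^{1+\beta}E(t-s)f(u(s))\|=\|A^{\beta}E(t-s)Af(u(s))\|\le c(t-s)^{\alpha(1-\beta)-1}$,
which is integrable precisely for $\beta<1$. The initial term is handled by $\|A^{\beta}F(t)Au_0\|\le ct^{-\alpha\beta}\|Au_0\|$.

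If you keep the subtraction route, there is a second requirement. The kernel $(t-s)^{-1-\alpha\beta}$ must be offset by a H\"older exponent of $f(u)$ in $L^2$ larger than $\alpha\beta$. The exponent $\alpha/2$ that your $\dot H^1$ argument naturally yields covers only $\beta<1/2$. You would need exponent $\alpha$, which comes from $u\in C^{\alpha}(\bar J;L^2(\Omega))$ together with the $L^\infty$ bound.
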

The proofs of these regularity results rely on the uniform bound 
$u\in L^\infty(\Omega\times J)$, see \cite[Theorem~6.19]{MR4290515}.
Also, in view of the identity
$
\Delta u^{3} = 6u|\nabla u|^{2} + 3u^{2}\Delta u,
$
the estimate $u \in C(\bar I; \dot H^{2}(\Omega))$, and the uniform bound 
$u\in L^\infty(\Omega\times J)$, there holds
\[
\|A f(u)\|_{L^{2}(\Omega)}
\le c\|\Delta u\|_{L^{2}(\Omega)}
+ c\|u|\nabla u|^{2}\|_{L^{2}(\Omega)}
+ c\|u^{2}\Delta u\|_{L^{2}(\Omega)}
\le c_T,
\]
with $c_{T}$ independent of $t$.


In what follows, we assume that 
$u_0\in H_0^1(\Omega)\cap H^{3+\varepsilon}(\Omega)$ with $0<\varepsilon\le 1$, 
which allows us to obtain improved regularity estimates, needed for the error analysis.

\begin{theorem}\label{Regularity_condition-b} 
Let $u_0\in H_0^1(\Omega) \cap H^{3+\epsilon}(\Omega), \; 0<\epsilon\leq1$.
Then,  problem  \eqref{main}  has a unique solution  $u$ satisfying for $t\in J$
\begin{equation*}
 \|A^{3/2}u(t)\|+ t^\alpha
\|A\partial^{\alpha}_t u(t) \| \leq c, \quad t^l\|\partial_{t}^{l}u(t)\|\leq C t^{\alpha} \quad \text{ and } \quad t\|A\partial_{t}u(t)\| \leq 
ct^{\alpha(1+\epsilon)/2}. 
\end{equation*}
Additionally, with $\boldsymbol{\sigma}=\nabla u$, we have
\begin{equation*}
    t\|A^{1/2}\partial_t\left(\nabla\cdot\boldsymbol{\sigma}(t)\right)\|\leq c t^{\epsilon\alpha/2},\qquad t^l\|\partial_{t}^{l}\boldsymbol{\sigma}(t)\|\leq c t^{\alpha},\qquad l =1,2,3. 
\end{equation*}
where the constant c may depend on $T$.
\end{theorem}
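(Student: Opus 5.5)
The plan is to work with the mild formulation \eqref{form-1}, $u(t)=F(t)u_0+\int_0^t E(t-s)f(u(s))\,ds$, and estimate each term with Lemma~\ref{LL}. Two facts from Theorem~\ref{WP} will be used throughout: the uniform bound $u\in L^\infty(\Omega\times J)$ and the bound $\|Af(u)\|\le c_T$ noted after that theorem. Existence and uniqueness follow directly from Theorem~\ref{WP}, since $H^{3+\epsilon}\cap H^1_0\subset \dot H^2$. So the task is to prove the stated bounds.

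\textbf{Spatial bounds.} For the first bound, write $A^{3/2}u(t)=F(t)A^{3/2}u_0+\int_0^t A^{1/2}E(t-s)Af(u(s))\,ds$. This uses $A^{3/2}u_0\in L^2$, since $u_0\in\dot H^3$ follows from the compatibility implied by $H^{3+\epsilon}\cap H^1_0$. If full compatibility fails, I would interpolate, and I expect only $\dot H^{3}$ to be needed. Lemma~\ref{LL}(b) with $\mu=1$ gives $\|A^{1/2}E(t)\|\le ct^{\alpha/2-1}$, which is integrable, so $\|A^{3/2}u(t)\|\le c$. Next, $A\partial_t^\alpha u=-\kappa^2A^2u+Af(u)$. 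Since $\|Af(u)\|\le c$, it suffices to show $\|A^2u(t)\|\le ct^{-\alpha}$. For the term $F(t)u_0$ this follows from Lemma~\ref{LL}(a) with $\mu=1$ applied to $A^{3/2}u_0$, which gives $ct^{-\alpha/2}$. For the convolution term, $A^2\int_0^tE(t-s)f(u(s))\,ds$, I would use the standard splitting $f(u(s))=f(u(t))+(f(u(s))-f(u(t)))$. The first piece is handled by $\int_0^tAE=I-F(t)$ applied to $Af(u(t))$. The second is handled with $\|AE(t-s)\|\le c(t-s)^{-1}$ together with the H\"older-type estimate $\|A(f(u(s))-f(u(t)))\|\le c(t-s)^{\gamma}s^{-\gamma'}$, which is obtained from \eqref{BG-3}.

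\textbf{Time derivatives.} For $t\,\|A\partial_tu\|$ and $t^l\|\partial_t^lu\|$, I would differentiate the mild form. One way is to use $tu'(t)$ and the identity $(t\,\partial_t)(E*g)=E*g+(tE)*g'+\ldots$; the other is the standard rewriting $u'(t)=F'(t)u_0+E(t)f(u_0)+\int_0^tE(t-s)f'(u(s))u'(s)\,ds$. Here $\|A F'(t)u_0\|=\|A^{-\mu/2}F'(t)A^{1+\mu/2}u_0\|\le ct^{\mu\alpha/2-1}$ with $1+\mu/2=(3+\epsilon)/2$, that is $\mu=1+\epsilon$. This gives $ct^{\alpha(1+\epsilon)/2-1}$, which is exactly the claimed rate. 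The term $\|AE(t)f(u_0)\|\le ct^{\alpha-1}$ is fine because $\alpha\ge\alpha(1+\epsilon)/2$. The integral term leads to a weakly singular Volterra inequality. I would take $y(t)=t^{1-\alpha(1+\epsilon)/2}\|Au'(t)\|$ and bound $\|Af'(u)u'\|\lesssim \|Au'\|+\|u'\|_{W^{1,4}}$ using the $L^\infty$ bound and the $H^2$ bounds. Then the Gronwall-type Lemma~\ref{Gronwall} closes the estimate. Higher derivatives ($l=2,3$) follow by induction from the $t^k\|F^{(k)}\|$ bounds in Lemma~\ref{LL}(a) with $\mu=0$, applying $t\partial_t$ repeatedly to the convolution. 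The flux bounds come from $\|\partial_t^l\boldsymbol\sigma\|=\|A^{1/2}\partial_t^lu\|$, the same argument with $\mu=1$, and $\nabla\cdot\boldsymbol\sigma=-Au$, so that $A^{1/2}\partial_t\nabla\cdot\boldsymbol\sigma=-A^{3/2}u'$. The rate $t^{\epsilon\alpha/2-1}$ comes from $\mu=\epsilon$ applied to $A^{(3+\epsilon)/2}u_0$.

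\textbf{Main obstacle.} I expect the hardest step to be controlling the nonlinear term in the strongest norms, namely $A^{3/2}(f'(u)u')$ for the flux estimate. This needs the product estimates in $\dot H^{s}$ together with the $L^\infty$ and $\dot H^3$ bounds already obtained. I would take care that the singular weights stay integrable uniformly in $\alpha\to1^-$.
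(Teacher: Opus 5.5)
Your skeleton for the derivative bounds matches the paper's: differentiate the mild form, shift powers of $A$ onto $u_0$ via Lemma~\ref{LL}, close with Lemma~\ref{Gronwall}, and treat $l=2,3$ by multiplying by $t$ and splitting $t=(t-s)+s$. Your proof of $\|A^{3/2}u(t)\|\le c$ via $A^{1/2}E(t-s)Af(u(s))$ is more explicit than the paper's appeal to \eqref{BG-2}, which with $\beta=1/2$ only yields $ct^{-\alpha/2}$.

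Two steps, however, fail as written. First, the H\"older bound for $\|A(f(u(s))-f(u(t)))\|$ in your $\|A^2u(t)\|$ argument cannot come from \eqref{BG-3}. That estimate holds only for $\beta<1$, and its rate degenerates to the non-integrable $t^{-1}$ as $\beta\to1$, so it never controls $\|A(u(t)-u(s))\|$. What you need is $\|A(f(v)-f(w))\|\le c\|A(v-w)\|$ for $v,w$ bounded in $\dot H^2$ (fine for $d\le3$), combined with your own bound $\|Au'(s)\|\le cs^{\alpha(1+\epsilon)/2-1}$. This step therefore has to come after the time-derivative part, which is possible because that part does not use it. Second, Lemma~\ref{LL}(a) with $\mu=0$ only gives $t^l\|F^{(l)}(t)u_0\|\le c$ and loses the factor $t^\alpha$. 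You must use the second inequality with $\mu=2$, writing $F^{(l)}(t)u_0=A^{-1}F^{(l)}(t)Au_0$, and $A^{1/2}F^{(l)}(t)u_0=A^{-1}F^{(l)}(t)A^{3/2}u_0$ for the flux, as the paper does.

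Once $\|Au'\|$ is available, the detour through $A^2u$ is unnecessary. The paper uses the identity $t\partial_t^\alpha u=\partial_t^\alpha(tu)-\alpha\mathcal I^{1-\alpha}u-t\omega_{1-\alpha}(t)u_0$ together with $\|Au\|+t\|Au'\|\le c$. More directly still, $\|A\partial_t^\alpha u(t)\|\le\int_0^t\omega_{1-\alpha}(t-s)\|Au'(s)\|\,ds\le c\,\frac{\Gamma(\beta)}{\Gamma(1+\beta-\alpha)}\,t^{\beta-\alpha}$ with $\beta=\alpha(1+\epsilon)/2$. Your route does give the stronger bound $\|A^2u(t)\|\le ct^{-\alpha/2}$, but at the price of a second smoothing argument.

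The ``main obstacle'' you anticipate is likewise avoidable. The paper writes $A^{3/2}E(t-s)=A^{1/2}E(t-s)A$ and uses $\|A^{1/2}E(t)\|\le ct^{\alpha/2-1}$. It therefore needs only the $\dot H^2$ product bound $\|A(f'(u)u')\|\le c\|Au'\|$ and the rate already proved for $\|Au'\|$, with no $\dot H^3$ product estimates. For $l=2,3$, the term $s f''(u)(u')^2$ can be handled with $\|u'\|_{L^\infty}\le c\|Au'\|$, whereas the paper cites an $L^\infty$ bound for $tu'$.

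Finally, $H^{3+\epsilon}\cap H^1_0\not\subset\dot H^3$: membership in $\dot H^3$ also requires $\Delta u_0=0$ on $\partial\Omega$, and interpolation cannot repair this. Indeed, a uniform bound on $\|A^{3/2}u(t)\|$ together with $u(t)\to u_0$ in $\dot H^2$ forces $Au_0\in H_0^1$. Both you and the paper actually use $\|A^{(3+\epsilon)/2}u_0\|<\infty$, so state $u_0\in\dot H^{3+\epsilon}$ as the hypothesis.
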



\begin{proof}
The estimate $\|A^{3/2}u(t)\| \leq c$ follows from \eqref{BG-2} and the regularity assumption on $u_0$.
Differentiating the representation (\ref{form-1}) with respect to $t$ and apply $A$ gives
 \begin{eqnarray*}\label{3i}
     Au^{\prime }(t) &=&  A^{-(1+\epsilon)/2}F^{\prime }(t)A^{(3+\epsilon)/2}u_0 + AE(t)f(u_0)+ \int_0^t AE(t-s)[(u'-3u^2u')(s)] ds\nonumber \\
     &=:&\sum_{i=1}^3I_i.
 \end{eqnarray*}
By Lemma~\ref{LL}, the terms $I_1$ and $I_2$ are respectively bounded by 
 \begin{equation*}\label{firstb11}
   \|I_1\|\leq ct^{\alpha(1+\epsilon)/2-1}\| A^{(3+\epsilon)/2} u_0\|\qquad \text{and} \qquad  
   \|I_2\|\leq  ct^{\alpha-1}\|Au_0\|.
 \end{equation*}
 The third term follows 
 \begin{equation*}\label{sb1}
     \|I_3\|\leq c \int_0^t (t-s)^{\alpha-1}\| Au^{\prime}(s)\|ds.
 \end{equation*}
The last three lemmas and the  Grönwall’s inequality in Lemma~\ref{Gronwall} give the desired estimate $t\|A\partial_{t}u(t)\| \leq ct^{\alpha(1+\epsilon)/2}$.

Now, multiply by $A^{3/2}$ instead of $A$ to get
\begin{eqnarray*}\label{3i-b}
    A^{3/2}u^{\prime}(t) &=&  A^{-\epsilon/2}F^{\prime }(t) A^{(3+\epsilon)/2}u_0 + 
     A^{1/2} E(t)Af(u_0)\nonumber \\
     &&+ \int_0^t A^{3/2}E(t-s)f'(u(s))u'(s)\,ds.
\end{eqnarray*}
Then, it follows that 
$$
\|A^{3/2}u^{\prime}(t)\|\leq ct^{\epsilon \alpha/2-1}\|A^{(3+\epsilon)/2}u_0\| +
 ct^{\alpha/2-1}\|Af(u_0)\| +c\int_0^t (t-s)^{\alpha/2-1}\|Au'(s)\|\,ds.
$$
Using previous bound for $\|Au'(s)\|$ and apply Gronwall's inequality yields
$$
\|A^{3/2}u^{\prime}(t)\|\leq ct^{\epsilon \alpha/2-1}\|A^{(3+\epsilon)/2}u_0\|,
$$
where $c$ depends on $T$. To establish the estimate $t^l\|\partial_t^l u(t)\|\le c t^\alpha$, we start from
\begin{align}\label{eq:Ah1}
     u'(t) =  A^{-1}F'(t)Au_0 +  E(t)f(u_0)+ \int_0^t E(t-s)f'(u(s))u'(s)\,ds .
\end{align}
Consequently,
$$
\|u^{\prime }(t)\| \leq ct^{\alpha-1} \|Au_0 \|+ ct^{\alpha-1} \|f(u_0)\|+c\int_0^t(t-s)^{\alpha-1}\|u'(s)\|\,ds.
$$
An application of Grönwall's inequality yields the desired estimate for $l=1$.

For higher derivatives, we proceed similarly. To remove the singular behavior near
$t=0$, we multiply \eqref{eq:Ah1} by $t$ and obtain
 \begin{align*}
    tu^{\prime }(t) =  &A^{-1}tF^{\prime }(t)Au_0 +  tE(t)f(u_0)+ \int_0^t  (t-s)E(t-s)f'(u(s))u'(s)\,ds\\
     &+\int_0^t  sE(t-s)f'(u(s))u'(s)\,ds=\sum_{i=1}^4J_i(t).
\end{align*}
We estimate the derivatives $J_i'(t)$ separately. By Lemma~\ref{LL},
\begin{align*}
     \|J_1'(t)+J_2'(t)\| = \| A^{-1}[(F'(t)+tF''(t)]Au_0 +  [E(t)+tE'(t)]f(u_0)\|\leq c_T t^{\alpha-1}. 
\end{align*}
Using again Lemma~\ref{LL} together with the bound already proved for $l=1$,
we obtain
\begin{align*}
      \|J_3'(t)\| = &\int_0^t  \| E(t-s)f'(u(s))u'(s)\|\,ds\\
     &+\int_0^t (t-s) \|E'(t-s)f'(u(s))u'(s)\|\,ds\\
     \leq &\; c\int_0^t(t-s)^{\alpha-1} s^{\alpha-1}\,ds\leq c_T t^{\alpha-1}.
\end{align*}
For the last term, we use that $tu'(t)\in L^\infty(\Omega\times J)$
(see \cite[Theorem~10.8]{MR4290515} for $\ell=1$), which implies
\begin{align*}
      \|J_4'(t)\| = &\int_0^t  \| E(t-s)[f'(u(s))u'(s)+sf''(u(s))(u'(s))^2]\|\,ds\\
     &+\int_0^t s\|E(t-s)f'(u(s))u''(s)\|\,ds\\
     \leq &\; c_T t^{\alpha-1}+c\int_0^t (t-s)^{\alpha-1} \|su''(s)\|\,ds.
\end{align*}
Combining the above bounds gives
\begin{align*}
     \|tu''(t)\| \leq c_T t^{\alpha-1}+c\int_0^t (t-s)^{\alpha-1} \|su''(s)\|\,ds,
\end{align*}
and another application of Grönwall’s inequality proves the desired estimate
for $l=2$. The case $l=3$ follows analogously and is therefore omitted. To establish the estimate $t^l\|A^{1/2}\partial_{t}^{l}u(t)\|\leq c t^{\alpha}$, we use the equation
\begin{align}\label{eq:Ah}
     A^{1/2}u^{\prime }(t) =  A^{-1}F^{\prime }(t)A^{3/2}u_0 +  A^{1/2}E(t)f(u_0)+ \int_0^t  A^{1/2}E(t-s)f'(u(s))u'(s)\,ds
\end{align}
and deduce that
$$
\|A^{1/2}u^{\prime }(t)\| \leq ct^{\alpha-1} \|A^{3/2}u_0 \|+ ct^{\alpha-1} \|A^{1/2}f(u_0)\|+c\int_0^t(t-s)^{\alpha-1}\|A^{1/2}u'(s)\|\,ds.
$$
The Gronwall's inequality yields the desired estimate for $l=1$.

 For the remaining cases, we use a similar approach. To avoid singularities near $t=0$, we first multiply \eqref{eq:Ah} by $t$ and obtain
 \begin{eqnarray*}
     A^{1/2}tu^{\prime }(t) &=&  A^{-1}tF^{\prime }(t)A^{3/2}u_0 \\
     &&+  A^{1/2}tE(t)f(u_0)+ \int_0^t  A^{1/2}(t-s)E(t-s)f'(u(s))u'(s)\,ds\\
     &&+\int_0^t  A^{1/2}sE(t-s)f'(u(s))u'(s)\,ds=\sum_{i=1}^4I_i(t).
\end{eqnarray*}
Below, we bound the derivatives $I_i'(t)$ separately. By Lemma~\ref{LL}, we immediately have
\begin{eqnarray*}
     \|I_1'(t)+I_2'(t)\| &=& \| A^{-1}[(F'(t)+tF''(t)]A^{3/2}u_0 +  A^{1/2}[E(t)+tE'(t)]f(u_0)\|\nonumber \\
     &\leq& c_T t^{\alpha-1}. 
\end{eqnarray*}
Again, using Lemma~\ref{LL} and the bound derived for $l=1$, $I_3'(t)$ can be bounded by
\begin{align*}
      \|I_3'(t)\| = &\int_0^t  \| E(t-s)A^{1/2}f'(u(s))u'(s)\|\,ds\\
     &+\int_0^t (t-s) \|E'(t-s)A^{1/2}f'(u(s))u'(s)\|\,ds\\
     \leq &\; c\int_0^t(t-s)^{\alpha-1} s^{\alpha-1}\,ds\leq c_T t^{\alpha-1}.
\end{align*}
To bound \( I_4'(t) \), we can follow a similar argument as that used for \( J_4'(t) \) and arrive at 
\begin{align*}
      \|I_4'(t)\| = &\int_0^t  \| E(t-s)A^{1/2}[f'(u(s))u'(s)+sf''(u(s))(u'(s))^2]\|\,ds\\
     &+\int_0^t s\|E(t-s)A^{1/2}f'(u(s))u''(s)\|\,ds\\
     \leq &\; c_T t^{\alpha-1}+c\int_0^t (t-s)^{\alpha-1} \|A^{1/2}su''(s)\|\,ds.
\end{align*}
Combining the preceding estimates, we finally  obtain
\begin{align*}
     \|A^{1/2}tu''(t)\| \leq c_T t^{\alpha-1}+c\int_0^t (t-s)^{\alpha-1} \|A^{1/2}su''(s)\|\,ds,
\end{align*}
which proves the desired estimate with $l=2$. For the remaining case $l=3$, the arguments are nearly identical and so not shown.

Finally, in order to establish the estimate $\;t^{\alpha}\|A\partial^{\alpha}_t u \| \leq c$, we appeal to the identity $$t\partial^{\alpha}_t u = \partial^{\alpha}_t (tu) - \alpha \int_0^t \omega_{1-\alpha}(t-s)u(s)\;ds - t\omega_{1-\alpha}(t)u (0), $$ and the fact that 
$\|A u(t)\|+t\|A\partial_t u\|\leq c_T$, which allows us to derive 
    \begin{align}
        \nonumber  \|At\partial^{\alpha}_t u\| &\leq \|A\partial^{\alpha}_t (tu)\| + \alpha \int_0^t \omega_{1-\alpha}(t-s)\|Au\|\;ds + t\omega_{1-\alpha}(t)\|Au (0) \|\\
        &\nonumber \leq \int_0^t \omega_{1-\alpha}(t-s)\|A u+sA\partial_s u\|\,ds + c\frac{\alpha t^{1-\alpha}}{\Gamma{(2-\alpha)}} + c\frac{t^{1-\alpha}}{\Gamma{(1-\alpha)}}\\
        \nonumber &\leq c_Tt^{1-\alpha}.
    \end{align}
which completes the proof.
\end{proof}

%
\section{Mixed Formulation and Full discretization}\label{section3}
%

By following the procedure outlined in \cite{tripathi2025non}, we introduce a new variable flux defined as $\boldsymbol{\sigma} = \nabla u$. The problem described in (\ref{main}) can be rewritten as follows:
\begin{align*}
& \boldsymbol{\sigma} - \nabla u =0\quad\text{in}\quad\Omega\times J,\\
&\partial^{\alpha}_t u - \kappa^2\nabla \cdot \boldsymbol{\sigma} =\;f(u) \quad\text{in}\quad\Omega\times J,\\
& u = 0 \quad\text{on}\quad\partial\Omega\times J,\\
& u(\cdot,0) = u_0(\cdot) \quad\text{in}\quad\Omega.
\end{align*}
Thus, a mixed formulation of the problem (\ref{main}) with $V:=L^2(\Omega)$, and  $\boldsymbol{W}:=H(div; \Omega)$ is to find the solution pair $(u,\boldsymbol{\sigma}):J\to V\times \boldsymbol{W}$, such that $u(0)=u_0$ and 
\begin{align}
\label{variation1MFEM} (\boldsymbol{\sigma}, \boldsymbol{w}) + ( u,  \nabla \cdot \boldsymbol{w})~ &=~0 \quad \forall \boldsymbol{w} \in \boldsymbol{W},\; t\in J,\\
    \label{variation2}\langle\partial_t^{\alpha} u,v \rangle  - \kappa^2(\nabla \cdot \boldsymbol{\sigma},v) ~&=~ \langle f(u),v\rangle  \quad \forall v \in V,\; t\in J.
\end{align}

 \textit{Spatial discretization:} Let $\mathcal{T}_h$ be a regular family of decomposition of $\Omega$ (see \cite{Ciarlet1978}) into closed d-simplexes $Q$ of size $h = \max\{\text{diam}(Q); Q \in \mathcal{T}_h\}$.  Further, let $V_h\times\boldsymbol{W}_h$ be a finite element subspace of $V\times\boldsymbol{W}$ having the following three properties:
\begin{enumerate}
    \item[(i)] $\nabla\cdot \boldsymbol{W}_h \subseteq V_h$,
    \item[(ii)] There exists a projection $\Pi_h : \boldsymbol{W} \rightarrow \boldsymbol{W}_h$, called Fortin projection satisfying $\nabla \cdot \Pi_h = P_h(\nabla \cdot)$. Here, $P_h :V \rightarrow V_h$ is the $L^2$-projection defined by $(P_h v -v,v_h)=0\; \forall~ v_h \in V_h, \; v \in V$, and hence 
    \begin{eqnarray*}
    (\nabla \cdot (\Pi_h \boldsymbol{w} - \boldsymbol{w}  ), v_h) &=&0 \; \forall~ v_h \in V_h,
\;
\text{ and }\;\\
(P_h v-v, \nabla \cdot \boldsymbol{w}_h) &=&0 \; \forall~ \boldsymbol{w}_h \in \boldsymbol{W}_h,
\end{eqnarray*}
    \item[(iii)] Approximation properties:
    \begin{eqnarray}\label{approxprop}
        \|\boldsymbol{w}- \Pi_h \boldsymbol{w} \| &\leq& C h^r \| \nabla \cdot \boldsymbol{w}\|_{r-1};\nonumber \\ \quad \|v -P_h v\| + h \|v -P_h v\|_1 &\leq& Ch^r\|v\|_r,\;\qquad1\leq r \leq 2.
    \end{eqnarray}
\end{enumerate}
Examples of such finite-dimensional subspaces $V_h\times\boldsymbol{W}_h$ of $V\times\boldsymbol{W}$ having the above properties can be found in \cite{MR483555} and \cite{MR1115205}.

\noindent\textit{Temporal Discretization:} Consider a partition $\{t_n\}_{n=0}^N$ of the interval $\bar{J}=[0, T]$ such that $t_n = \left(\frac{n}{N}\right)^{\gamma}T,\quad \gamma\geq 1,\;0\leq n \leq N$
and define a fractional time level $t_{n-\nu}:=\nu t_{n-1} + (1-\nu)t_n$ for an off-set parameter $\nu \in [0,1/2)$ and $\Delta t_j = t_j -t_{j-1}, ~ 1 \leq j \leq N,$ and we define local time-step ratio $ \mu_{j}=\Delta t_{j}/\Delta t_{j-1}, \quad  2 \leq j \leq N$ with  $\mu:= \max_{2\leq j \leq N}\mu_j.$ For any time sequence $\{\phi^j\}_{j=0}^N$, define the backward difference $\nabla \phi^j := \phi^j - \phi^{j-1}$. Also set $\phi^{n-\nu} := \nu \phi^{n-1} + (1-\nu)\phi^n$. Let $\Pi_{1,j}\phi$ denote the linear interpolant of a function $\phi$ with respect to the nodes $t_{j-1}$ and $t_j$, and let $\Pi_{2,j}\phi$ denote the quadratic interpolant with respect to $t_{j-1}$, $t_j$ and $t_{j+1}$. 
Note that
\begin{align*}
({\Pi}_{1,j}\phi)^{\prime}(t) = \frac{\nabla \phi^j}{\Delta t_j}  \text{ and }  ({\Pi}_{2,j}\phi)^{\prime}(t) = \frac{\nabla \phi^j}{\Delta t_j} + \frac{2(t-t_{j-1/2})}{\Delta t_j(\Delta t_j + \Delta t_{j+1})}\left(\frac{\nabla \phi^{j+1}}{\mu_{j+1}} - \nabla \phi^j\right).
\end{align*}
Now, the corresponding fully discrete mixed finite element approximation is to find a pair $(u_h^{n-\nu}, \boldsymbol{\sigma}^{n-\nu}_h)\in V_h \times \boldsymbol{W}_h$, $0 \leq n \leq N$ such that $u_h(0)=P_h u_{0}$ and
\begin{eqnarray}
\label{fullydiscrete1_AM} (\boldsymbol{\sigma}^{n-\nu}_h, \boldsymbol{w}_h) + ( u_h^{n-\nu}, \nabla \cdot  \boldsymbol{w}_h) &=&0 \quad \forall~ \boldsymbol{w}_h \in \boldsymbol{W}_h,\;0\leq n\leq N,\\
    \label{fullydiscrete2_AM}(D_{t_{n-\nu}}^{\alpha} u_h^{n-\nu},v_h) -(\nabla \cdot \boldsymbol{\sigma}_h^{n-\nu},v_h) &=& (u_h^{n-\nu} - P_h (u_h^{n-1})^3, v_h) \nonumber \\
    &&\;- 3(1-\nu)(P_h((u_h^{n-1})^2( u_h^{n} - u_h^{n-1})), v_h),
\end{eqnarray}
for al $v_h\in V_h$ and $1\leq n\leq N$.

Moreover, the well-known Alikhanov formula to approximate the Caputo derivative $\partial_{t}^{\alpha} \phi(t_{n-\nu})$ is given by
\begin{align}
    &D^{\alpha}_{t_{n-\nu}} \phi(t_{n-\nu}) := {\sum_{j=1}^{n}}K^{n,j}_{1-\alpha} \left(\phi(t_j)-\phi(t_{j-1})\right),\label{Alikhanovscheme} 
\end{align}
where 
\begin{align}
\label{discretekernel} K^{n,j}_{1-\alpha} &:= \begin{cases}
a^{1,1}_{1-\alpha}&\;\text{ for } j=1, n= 1,\\
a_{1-\alpha}^{n,n}+ \frac{1}{\mu_n}b_{1-\alpha}^{n,n-1} &\; \text{ for } j=n, n\geq 2,\\
a_{1-\alpha}^{n,j} + \frac{1}{\mu_j}b_{1-\alpha}^{n,j-1}-b_{1-\alpha}^{n,j} &\; \text{ for } 2 \leq j \leq n-1, n\geq 2,\\     
a_{1-\alpha}^{n,1}-  b_{1-\alpha}^{n,1} &\; \text{ for }  j=1, n\geq 2,
   \end{cases}
   \end{align}
   with the discrete coefficients $a_{1-\alpha}^{n,j}$ and $b_{1-\alpha}^{n,j}$ are defined by 
   \begin{align}
   \label{discretecoefficienta}a_{1-\alpha}^{n,j} & :=\frac{1}{\Delta t_j}\int_{t_{j-1}}^{\min\{t_{n-\nu},t_{j}\}}k_{1-\alpha}(t_{n-\nu}-s)\;ds \quad \text{for } 1 \leq j \leq n,\\
   b_{1-\alpha}^{n,j} & :=  \frac{1}{\Delta t_j(\Delta t_j + \Delta t_{j+1})}\int_{t_{j-1}}^{t_{j}}(s-t_{j-1})(t_j -s)\partial_s k_{1-\alpha}(t_{n-\nu}-s)\;ds\label{discretecoefficientb}
   \end{align}
for $1 \leq j \leq n-1$.
 
If $\nu=\alpha/2$ then in the limit $\alpha \rightarrow 1$,
this scheme reduces to the well-known Crank-Nicolson method ($\nu \rightarrow 1/2$). Therefore, the time-stepping scheme (\ref{Alikhanovscheme}) is also referred as a fractional
Crank–Nicolson method.

The following result will frequently be used in the convergence analysis of the proposed method.
\begin{lemma}\label{dfdoi}
   [\cite{MR3904430}, Lemma 4.1] The discrete Caputo formula (\ref{Alikhanovscheme}) satisfies 
\begin{align}\label{dfdoestimate}
\left((D^{\alpha}_{t_{n-\nu}}v)^{n-\nu},v^{n-\nu}\right) \geq \frac{1}{2}D^{\alpha}_{t_{n-\nu}}\|v^{n-\nu}\|^2,\;1\leq n\leq N,\quad \forall \nu \in [0,1/2).
\end{align} 
\end{lemma}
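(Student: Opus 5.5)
The statement to prove is Lemma~\ref{dfdoi}: for every $\nu\in[0,1/2)$, $\bigl((D^{\alpha}_{t_{n-\nu}}v)^{n-\nu},v^{n-\nu}\bigr)\ge \frac12 D^{\alpha}_{t_{n-\nu}}\|v^{n-\nu}\|^2$. The plan reduces this to structural properties of the discrete kernels $K^{n,j}_{1-\alpha}$ in \eqref{discretekernel}.

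\textbf{Step 1: kernel properties.} I would first establish, for $1\le j\le n$:
\begin{itemize}
\item[(P1)] positivity, $K^{n,j}_{1-\alpha}>0$;
\item[(P2)] monotonicity, $K^{n,j-1}_{1-\alpha}\le K^{n,j}_{1-\alpha}$;
\item[(P3)] a dominance condition on the leading coefficient, $(1-2\nu)K^{n,n}_{1-\alpha}-\nu K^{n,n-1}_{1-\alpha}\ge 0$.
\end{itemize}
These follow from the integral definitions \eqref{discretecoefficienta}--\eqref{discretecoefficientb}, because $k_{1-\alpha}(t_{n-\nu}-s)$ is positive, increasing and convex in $s$. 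In particular $b^{n,j}_{1-\alpha}\ge0$ and $b^{n,j}_{1-\alpha}$ is small compared with $a^{n,j}_{1-\alpha}$. If the dominance estimate in (P3) needs it, I would impose the mild step-ratio restriction on $\mu_j$ that is standard for the Alikhanov scheme.

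\textbf{Step 2: algebraic identity.} Write $\nabla v^j=v^j-v^{j-1}$ and
\[
v^{n-\nu}=v^n-\nu\nabla v^n .
\]
Then the left-hand side equals
\[
\sum_j K^{n,j}_{1-\alpha}\,(\nabla v^j,v^n)\;-\;\nu\sum_j K^{n,j}_{1-\alpha}\,(\nabla v^j,\nabla v^n).
\]
On the right-hand side, expand $\nabla\|v^{j-\nu}\|^2$ using $2(a-b,a)=\|a\|^2-\|b\|^2+\|a-b\|^2$. Subtracting, one obtains a quadratic form in the differences $\nabla v^1,\dots,\nabla v^n$. The goal is to show this quadratic form is nonnegative.

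\textbf{Step 3: positivity of the quadratic form.} Following the standard argument of Liao et al., I would sum by parts using the monotone sequence $K^{n,j}_{1-\alpha}$. This converts the quadratic form into a sum of terms of the type
\[
\bigl(K^{n,j}_{1-\alpha}-K^{n,j-1}_{1-\alpha}\bigr)\Bigl\|\textstyle\sum_{k\ge j}\nabla v^k\Bigr\|^2 ,
\]
each nonnegative by (P2), plus a boundary term involving $\|\nabla v^n\|^2$. The boundary term has coefficient proportional to $(1-2\nu)K^{n,n}_{1-\alpha}-\nu K^{n,n-1}_{1-\alpha}$. Where a cross term appears, I would absorb it with Young's inequality. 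The boundary term is then nonnegative by (P3).

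\textbf{Main obstacle.} The hardest part is Step 1, specifically the dominance inequality (P3) on nonuniform meshes. It requires delicate bounds on the integrals defining $a^{n,n}_{1-\alpha}$ and $b^{n,n-1}_{1-\alpha}$ in terms of the step ratio $\mu_n$. I would first try to bound $b^{n,n-1}_{1-\alpha}$ above by a multiple of $a^{n,n-1}_{1-\alpha}$ using convexity. I would then compare the resulting bound against $a^{n,n}_{1-\alpha}$, which carries the singular near-diagonal part of the kernel.
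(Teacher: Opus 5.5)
\textbf{The dominance condition (P3) is the wrong one, and Step~3 cannot close with it.} Your summation-by-parts architecture is the right one; the defect is the boundary term. Put $K^{n,0}_{1-\alpha}:=0$ and $S_l:=\sum_{k=l}^n\nabla v^k=v^n-v^{l-1}$ (so $S_n=\nabla v^n$, $S_{n+1}=0$). Use $\|v^j\|^2-\|v^{j-1}\|^2=2(v^n,\nabla v^j)-\|S_j\|^2+\|S_{j+1}\|^2$ and $\sum_j K^{n,j}_{1-\alpha}\nabla v^j=\sum_l (K^{n,l}_{1-\alpha}-K^{n,l-1}_{1-\alpha})S_l$. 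Completing the square then gives exactly
\[
2\big((D^{\alpha}_{t_{n-\nu}}v)^{n-\nu},v^{n-\nu}\big)-D^{\alpha}_{t_{n-\nu}}\|v^{n-\nu}\|^2
=\sum_{l=1}^{n-1}\big(K^{n,l}_{1-\alpha}-K^{n,l-1}_{1-\alpha}\big)\|S_l-\nu S_n\|^2
+\big[(1-2\nu)K^{n,n}_{1-\alpha}-(1-\nu)^2K^{n,n-1}_{1-\alpha}\big]\|S_n\|^2 .
\]
The cross term $-2\nu\sum_j K^{n,j}_{1-\alpha}(\nabla v^j,\nabla v^n)$ couples $S_n$ to every partial sum $S_l$, not just the last one. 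Absorbing it costs $\nu^2K^{n,n-1}_{1-\alpha}\|S_n\|^2$ in total, which is where $(1-\nu)^2$ comes from. The $S_l$ are arbitrary, so taking $S_l=\nu S_n$ for $l<n$ shows the inequality holds (given monotonicity) if and only if $(1-2\nu)K^{n,n}_{1-\alpha}\ge(1-\nu)^2K^{n,n-1}_{1-\alpha}$.

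Your (P3) is strictly weaker; for $\nu\le 1/3$ it already follows from (P1)--(P2). It does not suffice. Take $n=2$, $\nu=1/4$, $K^{2,1}_{1-\alpha}=19/20$, $K^{2,2}_{1-\alpha}=1$: (P1)--(P3) all hold. With $v^0=0$, $v^1=-3/4$, $v^2=1/4$ you get $v^{2-\nu}=0$, so the left side is $0$, while the right side is $\frac12\big(\frac{19}{20}\cdot\frac{9}{16}-\frac12\big)=\frac{11}{640}>0$.

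The kernel input actually needed is a ratio bound such as $K^{n,n-1}_{1-\alpha}/K^{n,n}_{1-\alpha}<(1-2\nu)/(1-\nu)$. The paper quotes this from \cite{MR4270344} in the proof of Lemma~\ref{discretekernelproperties}\ref{itm:g}, and it implies the sharp condition because $(1-\nu)^2<1-\nu$. The paper gives no argument of its own beyond citing \cite[Lemma 4.1]{MR3904430}; that ratio bound together with monotonicity (Lemma~\ref{discretekernelproperties}\ref{itm:a}) is what makes the citation applicable. So your ``main obstacle'' (bounding $b^{n,n-1}_{1-\alpha}$ to get (P3)) is aimed at the wrong target.

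Two further points. First, $D^{\alpha}_{t_{n-\nu}}\|v^{n-\nu}\|^2$ means $\sum_j K^{n,j}_{1-\alpha}(\|v^j\|^2-\|v^{j-1}\|^2)$; this is how it is unpacked in the proof of Lemma~\ref{DFGI}. Expanding differences of $\|v^{j-\nu}\|^2$ is a misreading, and $v^{-\nu}$ is not even defined.

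Second, (P2) and the ratio bound do not follow from positivity, monotonicity and convexity of $k_{1-\alpha}$ alone, and they are false for general $\nu\in[0,1/2)$. Take a uniform mesh with $\Delta t=1$, $\alpha=0.1$, $\nu=0.4$. Then $a^{2,2}_{1-\alpha}=0.6^{0.9}/\Gamma(1.9)\approx0.66$, $a^{2,1}_{1-\alpha}=(1.6^{0.9}-0.6^{0.9})/\Gamma(1.9)\approx0.93$ and $b^{2,1}_{1-\alpha}\approx0.007$. Hence $K^{2,1}_{1-\alpha}\approx0.92>K^{2,2}_{1-\alpha}\approx0.66$, and by the identity above the claimed inequality itself fails at $n=2$. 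The known kernel estimates are proved for Alikhanov's choice $\nu=\alpha/2$ under a bound on the step ratios, which is the setting in which the paper uses the lemma. Your plan should fix $\nu=\alpha/2$ from the start rather than aim at every $\nu\in[0,1/2)$.
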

For the non-linear term $u^3(t_{n-\nu})$, we shall use the Newton linearization
\begin{align*}
    (u^{n-\nu})^3 \approx (u^{n-1})^3 +  3(u^{n-1})^2(u^{n-\nu} - u^{n-1}).
\end{align*}
\begin{lemma}\label{trun_l}
    The truncation error $\mathcal{E}^{n-\nu}:= (u^{n-\nu})^3  - (u^{n-1})^3 -  3(u^{n-1})^2(u^{n-\nu} - u^{n-1})$ of Newton linearization satisfies
    \begin{align*}
        \|\mathcal{E}^{n-\nu}\| \leq C n^{- \min\{\epsilon\gamma\alpha,\;2 \} } \; \text{for}\; 1\leq n \leq N.
    \end{align*}
\end{lemma}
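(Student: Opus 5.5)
Since $\mathcal{E}^{n-\nu}$ is the Taylor remainder of $g(s)=s^3$ expanded about $u^{n-1}$ and evaluated at $u^{n-\nu}$, we write it exactly as
\begin{align*}
\mathcal{E}^{n-\nu} = \big(3u^{n-1} + (u^{n-\nu}-u^{n-1})\big)(u^{n-\nu}-u^{n-1})^2, \qquad u^{n-\nu}-u^{n-1} = (1-\nu)\nabla u^n .
\end{align*}
By the uniform bound $u\in L^\infty(\Omega\times J)$ recalled after Theorem~\ref{WP}, the prefactor is bounded in $L^\infty(\Omega)$. Hence
\[
\|\mathcal{E}^{n-\nu}\|\le C\|(\nabla u^n)^2\|=C\|\nabla u^n\|_{L^4(\Omega)}^2 .
\]
The estimate therefore reduces to bounding the temporal increment $\nabla u^n = \int_{t_{n-1}}^{t_n}u'(s)\,ds$ in $L^4(\Omega)$.

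For the $L^4$ norm of $u'$ we use the Sobolev embedding $H^1(\Omega)\hookrightarrow L^4(\Omega)$ (valid for $d\le3$) together with the regularity from Theorem~\ref{Regularity_condition-b}. The proof of that theorem also yields the bound $\|A^{1/2}u'(t)\|\le Ct^{\alpha-1}$ (the case $l=1$ of the $A^{1/2}$ estimate), and interpolating between this and $\|A^{3/2}u'(t)\|\le Ct^{\epsilon\alpha/2-1}$ gives stronger control if needed. We then split into two cases.

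For $n=1$ we have $\|\nabla u^1\|_{L^4}\le C\int_0^{t_1}s^{\alpha-1}\,ds\le Ct_1^{\alpha}$. This gives $\|\mathcal{E}^{1-\nu}\|\le Ct_1^{2\alpha}=CN^{-2\gamma\alpha}$, which is at most $Cn^{-\min\{\epsilon\gamma\alpha,2\}}$ because $n=1$.

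For $n\ge2$ we use $t_{n-1}\ge 2^{-\gamma}t_n$ and $\Delta t_n\le C\gamma N^{-1}T^{1/\gamma}t_n^{1-1/\gamma}$, which follow from the grading $t_n=(n/N)^\gamma T$. Then
\[
\|\nabla u^n\|_{L^4}\le C\Delta t_n\,t_{n-1}^{\alpha-1}\le CN^{-1}t_n^{\alpha-1/\gamma},
\]
and squaring gives $\|\mathcal{E}^{n-\nu}\|\le CN^{-2}t_n^{2\alpha-2/\gamma}$. If $\gamma\alpha\ge1$ this is $CN^{-2}$. Otherwise, writing $t_n^{2\alpha-2/\gamma}=C(n/N)^{2\gamma\alpha-2}$ gives $Cn^{2\gamma\alpha-2}N^{-2\gamma\alpha}\le Cn^{-2\gamma\alpha}$ using $n\le N$. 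In either case the bound is at most $Cn^{-\min\{2\gamma\alpha,2\}}$, and this is at most $Cn^{-\min\{\epsilon\gamma\alpha,2\}}$ since $\epsilon\le1$. The bound is presumably stated with $\epsilon$ because it is meant to match the other truncation errors in the subsequent analysis, which involve $\|A\partial_tu\|\le Ct^{\alpha(1+\epsilon)/2-1}$.

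The main obstacle is the $L^4$ control of $u'$. If the embedding is used only in $d\le3$, the $H^1$ bound suffices. For the bound to hold in general dimension, I would first try to bound $\|\nabla u^n\|_{L^\infty}$ through the embedding $\dot H^{2}\subset L^\infty$, using $\|Au'(t)\|\le Ct^{\alpha(1+\epsilon)/2-1}$. This weaker singularity exponent is exactly what introduces $\epsilon$ into the rate: repeating the above calculation gives $\|\mathcal{E}^{n-\nu}\|\le C\|\nabla u^n\|_{L^\infty}\|\nabla u^n\|$, and with exponents $\alpha(1+\epsilon)/2$ and $\alpha$ this produces a rate of at least $n^{-\min\{\gamma\alpha(\epsilon+3)/2,2\}}$, which still dominates $n^{-\min\{\epsilon\gamma\alpha,2\}}$.
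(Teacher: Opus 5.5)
Your proof is correct. The paper uses the same factorization $\mathcal{E}^{n-\nu}=(u^{n-\nu}-u^{n-1})^2(u^{n-\nu}+2u^{n-1})$ and the same case split, but it splits the norms differently and feeds in a different regularity estimate.

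\textbf{The paper's route.} It puts the prefactor in $L^2$ and the squared increment in $L^\infty$. It bounds $\|u^{n-\nu}-u^{n-1}\|_{L^\infty}\le C\int_{t_{n-1}}^{t_{n-\nu}}\|Au_t\|\,dt$ via $\dot H^2(\Omega)\hookrightarrow L^\infty(\Omega)$. It then inserts $\|Au_t(t)\|\le Ct^{\epsilon\alpha/2-1}$, a weakened form of $t\|A\partial_t u\|\le ct^{\alpha(1+\epsilon)/2}$ from Theorem~\ref{Regularity_condition-b}. This gives $Ct_1^{\epsilon\alpha}$ for $n=1$ and $CN^{-\epsilon\gamma\alpha}(n-1)^{\epsilon\gamma\alpha-2}$ for $n\ge 2$, and this is where $\epsilon$ enters the exponent.

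\textbf{Your route.} You put the prefactor in $L^\infty$ using the uniform bound on $u$. You put the increment in $L^4$ via $H_0^1(\Omega)\hookrightarrow L^4(\Omega)$. You then need only $\|A^{1/2}u_t(t)\|=\|\partial_t\boldsymbol{\sigma}(t)\|\le Ct^{\alpha-1}$, which is part of the statement of Theorem~\ref{Regularity_condition-b}.

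\textbf{What each buys.} Your route uses only first spatial derivatives of $u_t$. In the proof of Theorem~\ref{Regularity_condition-b}, the derivation of that bound involves $\|A^{3/2}u_0\|$ rather than $\|A^{(3+\epsilon)/2}u_0\|$. It also gives the sharper intermediate bound $Cn^{2\gamma\alpha-2}N^{-2\gamma\alpha}$ before you relax it to the stated rate. The paper's route needs only an $L^2$ bound on the prefactor, but pays with the stronger $\|Au_t\|$ estimate and a smaller exponent.

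Your case computations for $n=1$ and $n\ge 2$ are right. The argument also does not depend on whether $u^{n-\nu}$ means $\nu u^{n-1}+(1-\nu)u^n$ or $u(t_{n-\nu})$ (the paper's proof integrates up to $t_{n-\nu}$). In either case the increment is bounded by $\int_{t_{n-1}}^{t_n}\|u'(s)\|_{L^4}\,ds$.

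One correction to your last paragraph: passing to $\|\cdot\|_{L^\infty}$ through $\dot H^2(\Omega)\hookrightarrow L^\infty(\Omega)$ does not help in higher dimensions. That embedding requires $d\le 3$, which is more restrictive than the $L^4$ embedding you already use, so it is not a fallback for general $d$.
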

\begin{proof}
By rearranging, we have $\mathcal{E}^{n-\nu}:= (u^{n-\nu})^3  - (u^{n-1})^3 -  3(u^{n-1})^2(u^{n-\nu} - u^{n-1})= (u^{n-\nu} - u^{n-1})^2(u^{n-\nu} + 2u^{n-1})$ for $1\leq n \leq N$. Now,
\begin{eqnarray*}
  \|\mathcal{E}^{n-\nu}\| &\leq& \|(u^{n-\nu} + 2u^{n-1})\|\; \|(u^{n-\nu} - u^{n-1})\|^2_{L^\infty}\\
  &\leq&  C \left( \int_{t_{n-1}}^{t_{n-\nu}} \|A u_t \|\;dt\right)^2.
\end{eqnarray*}
Using Theorem~\ref{Regularity_assumptions}, we get
\begin{align*}
    \|\mathcal{E}^{1-\nu}\| \leq C\left( \int_{t_0}^{t_{1-\nu}}t^{\epsilon\alpha/2 -1} dt\right)^2 \leq Ct_{1}^{\epsilon\alpha} \leq C N^{-\epsilon\gamma \alpha}.
\end{align*}
For $2 \leq n \leq N$, we have,
\begin{eqnarray*}
    \|\mathcal{E}^{n-\nu}\| &\leq&\; C\left( \int_{t_{n-1}}^{t_{n-\nu}}t^{\epsilon\alpha/2 -1} dt\right)^2 \\
    &\leq& C \left(\Delta t_n t_{n-1}^{\epsilon\alpha/2-1}\right)^2
    \leq C\left( N^{-\epsilon\gamma\alpha}(n-1)^{\epsilon\gamma\alpha-2}\right)\leq Cn^{-\min\{\epsilon\gamma\alpha,\;2\}}.
\end{eqnarray*}
\end{proof}

The following properties of discrete kernels $K_{1-\alpha}^{n,j}$ and their complementary discrete kernel
\begin{align}\label{complementarydiscretekernel}
&P_{\alpha}^{n,i} := \frac{1}{K_{1-\alpha}^{i,i}}\begin{cases}  
\displaystyle{\sum_{j=i+1}^n} P_{\alpha}^{n,j} \left(K_{1-\alpha}^{j,i+1} - K_{1-\alpha}^{j,i} \right) & \text{ : }  ~ 1 \leq i \leq n-1,\\
1 & \text{ : }  ~  i = n
\end{cases}
\end{align}
serve as key tools for deriving the subsequent error estimates.

\begin{lemma}\label{discretekernelproperties}
The discrete kernels $K_{1-\alpha}^{n,j}$ and $P_{\alpha}^{n,j}$ with $\pi_A=\frac{11}{4}$ and $\nu\in[0,\frac{1}{2})$ satisfy the following results:
\begin{enumerate} 
  \item[\namedlabel{itm:a}{(a)}] $0\; <\; P_{\alpha}^{n,j} \leq \pi_A \Gamma(2-\alpha) \Delta t_j^{\alpha}$ and $0<K_{1-\alpha}^{n,i-1} < K_{1-\alpha}^{n,i} \;$ for $1\leq j\leq n$ and $2\leq i \leq n$.
 \item[\namedlabel{itm:b}{(b)}] $\displaystyle{\sum_{j=i}^n P_{\alpha}^{n,j} K^{j,i}_{1-\alpha} = 1, ~ 1 \leq i \leq n}$.
  \item[\namedlabel{itm:c}{(c)}] $\displaystyle{\sum_{j=1}^{n} P_{\alpha}^{n,j}k_{1+(m-1)\alpha}(t_j) \leq \pi_A k_{1+m\alpha}(t_{n})}$ for any non-negative integer $0\leq m\leq \left\lfloor\frac{1}{\alpha} \right\rfloor$.
 \item[\namedlabel{itm:d}{(d)}] $ \nu \sum_{j=1}^{n-1}P_{\alpha}^{n,j}E_{\alpha}(\nu t_j^{\alpha}) \leq \pi_A (E_{\alpha}(\nu t_n^{\alpha})-1)$  for any constant $ \nu>0,$ provided $\Delta t_{n-1}\leq \Delta t_{n},\; n \geq 2$, where $\displaystyle{E_{\alpha}(z) := \sum^{\infty}
_{j=0}\frac{z^j}{\Gamma(j\alpha + 1)}}$ is the Mittag-Leffler function.
 \item[\namedlabel{itm:e}{(e)}]  $\displaystyle{\sum_{j=1}^{n}P_{\alpha}^{n,j}\;t_j^{\beta-\alpha}\;\leq \; \frac{\pi_A \Gamma{(1+\beta - \alpha)}}{\Gamma(1+\beta)}\;t_n^\beta\quad \forall \beta\in (0,1).}$
\item[\namedlabel{itm:f}{(f)}] $\sum_{j=1}^{n}P_{\alpha}^{n,j}\;t_j^{-\alpha}\;\leq \; 4 \pi_A e^\gamma\;\log_e(n+2), \;\text{ provided }\; t_n=\left(\frac{n}{N}\right)^\gamma T,\;\gamma\geq 1.$
    \item[\namedlabel{itm:g}{(g)}] $P^{n,n}_\alpha/ P^{n,n-1}_\alpha\;\leq\;\displaystyle\frac{2-\alpha}{\alpha}\mu_n^\alpha,\;n\geq 2.$ 
\end{enumerate}
\end{lemma}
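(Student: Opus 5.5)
Most items of Lemma~\ref{discretekernelproperties} are already in the literature on Alikhanov kernels on nonuniform meshes (Liao et al.), so the plan is to collect and check them rather than rederive everything. I first expand $K^{n,j}_{1-\alpha}$ through the coefficients \eqref{discretecoefficienta}--\eqref{discretecoefficientb}. Since $k_{1-\alpha}$ is positive and decreasing, $a^{n,j}_{1-\alpha}$ increases in $j$ and $b^{n,j}_{1-\alpha}\ge 0$ is small relative to it. For $\nu\in[0,1/2)$ this gives the monotonicity $0<K^{n,i-1}_{1-\alpha}<K^{n,i}_{1-\alpha}$ in (a) and the lower bound $K^{n,n}_{1-\alpha}\ge \frac{1}{\pi_A\Delta t_n}\int_{t_{n-1}}^{t_n}k_{1-\alpha}(t_n-s)\,ds$, with $\pi_A=11/4$ absorbing the correction $b^{n,n-1}/\mu_n$. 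The remaining bounds in (a) then follow from the recursion \eqref{complementarydiscretekernel}. Monotonicity makes every increment $K^{j,i+1}-K^{j,i}$ positive, so $P^{n,i}>0$ by backward induction. From (b), $P^{n,j}\le 1/K^{j,j}_{1-\alpha}$, and this last quantity is bounded by $\pi_A\Gamma(2-\alpha)\Delta t_j^\alpha$.

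Item (b) is immediate. Exchanging sums, the identity $\sum_{j=i}^n P^{n,j}K^{j,i}=1$ is equivalent to the defining recursion, which I verify by induction downward from $i=n$.

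Items (c), (d) and (e) rest on one comparison principle. If $\phi$ is increasing with $\phi(0)=0$, I take $v=\mathcal I^{\alpha}\phi$-type functions and use the consistency bound $D^\alpha_{t_{j-\nu}}v\ge \pi_A^{-1}\partial^\alpha_t v$ in an appropriate sense. Summing with $P^{n,j}$ and invoking (b) then gives $\sum_j P^{n,j}\partial_t^\alpha v(t_j)\le \pi_A v(t_n)$. Concretely:
\begin{itemize}
\item for (c), choose $v=k_{1+m\alpha}$, whose Caputo derivative is $k_{1+(m-1)\alpha}$;
\item for (e), choose $v=t^\beta$;
\item for (d), choose $v=E_\alpha(\nu t^\alpha)$, which satisfies $\partial_t^\alpha v=\nu v$.
\end{itemize}
The condition $\Delta t_{n-1}\le\Delta t_n$ in (d) is what makes the local monotonicity comparison hold; the graded mesh satisfies it.

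Item (f) has no admissible $\beta$, so I will estimate it directly. Using (a), I write $\sum_j P^{n,j}t_j^{-\alpha}\le \pi_A\Gamma(2-\alpha)\sum_j(\Delta t_j/t_j)^{\alpha}$, which is too crude in general. The better route is to split the sum. For $j$ near $n$, use $P^{n,j}\lesssim\Delta t_j^\alpha$ together with $\Delta t_j/t_j\le \gamma/j$. For the rest, compare with (e) at $\beta\to0$, $\beta = 1/\log(n+2)$: set $t_j^{-\alpha}\le t_j^{\beta-\alpha}t_1^{-\beta}$ and note $t_n^\beta t_1^{-\beta}=n^{\gamma\beta}\le e^\gamma$. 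Since $\Gamma(1+\beta-\alpha)/\Gamma(1+\beta)\lesssim 1/\beta$, this yields the bound $4\pi_A e^\gamma\log(n+2)$.

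Item (g) follows from the recursion with $i=n-1$: $P^{n,n-1}=P^{n,n}(K^{n,n}-K^{n,n-1})/K^{n-1,n-1}$. Explicit evaluation of these three kernels bounds the ratio $K^{n-1,n-1}/(K^{n,n}-K^{n,n-1})$ by $\frac{2-\alpha}{\alpha}\mu_n^\alpha$. I expect this careful tracking of the $b$-corrections, and keeping the constant $\pi_A$ uniform, to be the main obstacle. The same bookkeeping also enters the lower bound for $K^{n,n}$.
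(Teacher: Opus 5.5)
Your skeleton matches the paper's. Items (a)--(e) are taken from the Alikhanov-kernel literature. Item (f) is derived from (e) with $\beta=1/\log_e(n+2)$ and $t_j^{-\alpha}\le t_1^{-\beta}t_j^{\beta-\alpha}$, exactly as in the paper. Item (g) comes from the recursion (\ref{complementarydiscretekernel}) at $i=n-1$. Your check of (b), the backward induction for $P^{n,i}_\alpha>0$, and the bound $P^{n,j}_\alpha\le 1/K^{j,j}_{1-\alpha}$ are all correct.

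The genuine gap is the claim that (c), (d) and (e) follow from one comparison principle. Write $\lambda$ for the constant called $\nu$ in (d). Applied to $v=E_\alpha(\lambda t^\alpha)$, your principle would give $\lambda\sum_{j=1}^{n}P^{n,j}_\alpha E_\alpha(\lambda t_j^\alpha)\le\pi_A\big(E_\alpha(\lambda t_n^\alpha)-1\big)$. This is false as soon as $\lambda P^{n,n}_\alpha\ge\pi_A$, because the $j=n$ term alone exceeds the right-hand side. So the bound $D^\alpha_{t_{j-\nu}}v\ge\pi_A^{-1}\partial_t^\alpha v(t_j)$ fails for this $v$. Item (d), with its sum stopping at $n-1$ and its hypothesis $\Delta t_{n-1}\le\Delta t_n$, needs the separate result the paper cites (\cite{MR3904430}, Lemma~2.3).

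The comparison is valid for $v$ nondecreasing and \emph{concave}. On each $[t_{k-1},t_k]$ the map $s\mapsto k_{1-\alpha}(t_j-s)$ is increasing and $v'$ is nonincreasing, so Chebyshev's integral inequality gives $\frac{\nabla v^k}{\Delta t_k}\int_{t_{k-1}}^{t_k}k_{1-\alpha}(t_j-s)\,ds\ge\int_{t_{k-1}}^{t_k}k_{1-\alpha}(t_j-s)v'(s)\,ds$. This concavity requirement is exactly why (c) is restricted to $m\alpha\le 1$ and (e) to $\beta<1$. Your hypothesis ($\phi$ increasing, $\phi(0)=0$, $v=\mathcal I^\alpha\phi$) is not the right one: it admits convex $v$ (e.g. $\phi(t)=t$) and excludes the cases $m=0,1$ of (c).

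The comparison also needs the lower bound $K^{j,k}_{1-\alpha}\ge\frac{1}{\pi_A\Delta t_k}\int_{t_{k-1}}^{t_k}k_{1-\alpha}(t_j-s)\,ds$ for \emph{every} $1\le k\le j$, because each term $K^{j,k}_{1-\alpha}\nabla v^k$ must be bounded below. You only address $k=j$. These off-diagonal bounds and the monotonicity in (a) are where $\pi_A=\frac{11}{4}$ and the real work lie. There $-b^{n,k}_{1-\alpha}$ enters with the unfavourable sign. For interior $i$, $K^{n,i}_{1-\alpha}-K^{n,i-1}_{1-\alpha}$ contains $b^{n,i-1}_{1-\alpha}(1+1/\mu_i)-b^{n,i}_{1-\alpha}-b^{n,i-2}_{1-\alpha}/\mu_{i-1}$, and ``$b$ is small relative to $a$'' does not decide its sign. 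On the diagonal nothing has to be absorbed: $b^{n,n-1}_{1-\alpha}\ge 0$ only helps, and $K^{n,n}_{1-\alpha}\ge a^{n,n}_{1-\alpha}=\frac{(1-\nu)^{1-\alpha}}{\Gamma(2-\alpha)}\Delta t_n^{-\alpha}\ge\frac{\Delta t_n^{-\alpha}}{2\Gamma(2-\alpha)}$. If you take these facts from the literature, as the paper does, cite them rather than sketch a derivation that does not establish them.

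Two minor points. In (f) the splitting is unnecessary, since the $\beta$-device covers all $j$ at once. In (g) the paper organizes your ``explicit evaluation'' as $P^{n,n}_\alpha/P^{n,n-1}_\alpha=\frac{K^{n-1,n-1}_{1-\alpha}}{K^{n,n}_{1-\alpha}}\big(1-K^{n,n-1}_{1-\alpha}/K^{n,n}_{1-\alpha}\big)^{-1}$. It bounds the second factor by $\frac{2-\alpha}{\alpha}$ using the cited estimate $K^{n,n-1}_{1-\alpha}/K^{n,n}_{1-\alpha}<\frac{1-\alpha}{1-\alpha/2}$ (offset $\nu=\alpha/2$). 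It bounds the first factor by $\mu_n^\alpha$ through the $\Delta t^{-\alpha}$ scaling of the diagonal kernels.
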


\begin{proof}
The proof of \ref{itm:a} and \ref{itm:b} are provided in [\cite{MR3904430}~Lemma~2.1(1), \cite{MR4270344}~Theorem~2.1(II)]. By substituting $v(t) = k_{1+m\alpha}(t)$ and $k_{1+\beta}(t)$ into [\cite{MR3904430} Lemma~2.1~(2)], we obtain results \ref{itm:c} and \ref{itm:e}, respectively. The result \ref{itm:d} follows from [\cite{MR3904430}~Lemma~2.3]. The estimate \ref{itm:f} follows from \ref{itm:e}; however, for the sake of completeness and the reader's convenience, the proof is provided below. Define $\delta_n := \frac{1}{\log_e (n+2)}$ and consider
    \begin{align}\label{proplogfact}
         \sum_{j=1}^{n}P_{\alpha}^{n,j}\;t_j^{-\alpha}\;&=\; \sum_{j=1}^{n}P_{\alpha}^{n,j}\;t_j^{\delta_n-\alpha}\;t_j^{-\delta_n}\;\leq \; t_1^{-\delta_n}\;\sum_{j=1}^{n}P_{\alpha}^{n,j}\;t_j^{\delta_n-\alpha}.
    \end{align}
    Thus, an application of the estimate \ref{itm:e} with $\beta=\delta_n$ in (\ref{proplogfact}) yields the estimate \ref{itm:f} as follows
    \begin{eqnarray*}
         \sum_{j=1}^{n}P_{\alpha}^{n,j}\;t_j^{-\alpha}\;&\leq&\; \frac{\pi_A \Gamma(1+\delta_n -\alpha)}{\Gamma(1+\delta_n)} \;t_1^{-\delta_n}\;t_n^{\delta_n}\;\\
         &=&\; \frac{\pi_A \Gamma(2+\delta_n -\alpha)}{(1+\delta_n -\alpha)\Gamma(1+\delta_n)} \;n^{\gamma\delta_n}\;\leq\; 4 \pi_A e^\gamma\log_e(n+2) ,
    \end{eqnarray*}    
    where we have used $\Gamma(1+\delta_n)\geq 1/2$, $\Gamma(2+\delta_n -\alpha)\leq 2$, $1+\delta_n -\alpha\geq \delta_n$ and $n^{\gamma\delta_n} \leq e^\gamma$ to obtain the last term in the above estimate. To establish result \ref{itm:g}, we utilize the complementary relation (\ref{complementarydiscretekernel}) and the result  $K_{1-\alpha}^{n,n-1}/K_{1-\alpha}^{n,n} < (1-2\sigma)/(1-\sigma)$ from [\cite{MR4270344}~Theorem~2.1(III)], for $\sigma=\alpha/2$, to derive
     \begin{align}
        & P^{n,n-1}_\alpha = P^{n,n}_\alpha \;\left(1-\frac{K^{n,n-1}_{1-\alpha}}{K^{n,n}_{1-\alpha}}\right)/\mu_n^\alpha\;\geq \; P^{n,n}_\alpha \;\left(\frac{\alpha}{2-\alpha}\right)/\mu_n^\alpha.\label{lemma3.2additional3}
    \end{align}
The estimate (\ref{lemma3.2additional3}) yields the result \ref{itm:g}, thus completing the remainder of the proof.

\end{proof}

Previously established discrete fractional Gr\"{o}nwall inequalities, as discussed in \cite{MR4402734, MR3904430}, impose severe time-step restrictions in the temporal direction for (\ref{main}). 
In order to ease the severe time-step restriction to a milder one, we present below a refined discrete fractional 
Gr\"{o}nwall inequality, which allows us to prove the stability and convergence of the proposed Alikhanov-MFEM on a graded mesh for (\ref{main}), under a less restrictive time-step condition. The proof for Lemma \ref{DFGI} can be found in Appendix \ref{appendix_sec_gronwall_b}.

   \begin{lemma}\label{DFGI}{(Discrete fractional Gr\"{o}nwall inequality).}
    Let $\{v^{n}\}_{n=0}^{N}$, $\{\xi^{n}\}_{n=1}^{N}$, $\{\eta^{n}\}_{n=1}^{N}$ and $\{\zeta^{n}\}_{n=1}^{N}$ be non-negative finite sequences such that 
\begin{align}
    \label{dfgi1} 
     D^{\alpha}_{t_{n-\nu}}(v^{n-\nu})^2\;&\leq \; \sum_{i=0}^{n}\lambda_{i}^{n}(v^{i})^{2} + v^{n-\nu}\xi^{n} + (\eta^{n})^{2} +  (\zeta^{n})^{2},\quad 1\leq n \leq N,
\end{align}
where $\lambda_{j}^{n}\;\geq\; 0,\;0\leq j\leq n,$ and the discrete fractional differential operator $D^{\alpha}_{t_{n-\nu}},\; 1 \leq n\leq N,$ is given by (\ref{Alikhanovscheme}). If there exists a constant $\Lambda\;>\;0,$ such that, $\sum_{j=0}^{n}\lambda_{j}^{n}\;\leq\;\Lambda,\;1\leq n \leq N,$ and if $\Delta t_{n-1}\leq\Delta t_{n},\;2\leq n \leq N,$ with the maximum time-step size 
\begin{align} \label{timecondition}
\Delta t:=\max_{1\leq n \leq N} \Delta t_{n}\;\leq\; \left( \delta \pi_A  \Gamma{(2-\alpha) \max_{1 \leq n \leq N}\lambda_n^n}\right)^{-\frac{1}{\alpha}}, \text{ for some } \delta>1,
\end{align}
then, for $C_\delta:=\frac{\delta}{\delta-1} $ and $1\leq n \leq N$,  
\begin{align}
    \label{dfgi2} 
    v^{n}\;\leq &\; C_\delta E_{\alpha}(C_\delta \pi_A\Lambda t_{n}^{\alpha})\Bigg(v^{0} + \max_{1\leq j \leq n}\sum_{i=1}^{j}P_{\alpha}^{j,i} \xi^{i} + \left(2\pi_A t_n^\alpha \right)^{\frac{1}{2}}\max_{1\leq j \leq n}\eta^{j} \nonumber \\
    &\;+ \max_{1\leq j \leq n}\left(\sum_{i=1}^{j}P_{\alpha}^{j,i} (\zeta^{i})^2\right)^{\frac{1}{2}}\Bigg).
\end{align}

\end{lemma}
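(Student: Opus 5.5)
Following the argument of Liao et al.\ (the source behind [\cite{MR3904430}, Theorem~3.1]), I will not work with $(v^n)^2$ directly. Instead I will reduce \eqref{dfgi1} to a linear inequality in $v^{n-\nu}$ and then solve it with the complementary kernel $P_\alpha^{n,j}$.

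\textbf{Step 1: reduction to a linear inequality.} Write $D^\alpha_{t_{n-\nu}}(v^{n-\nu})^2=\sum_{j}K^{n,j}_{1-\alpha}\nabla (v^j)^2$. By the monotonicity $K^{n,j-1}_{1-\alpha}<K^{n,j}_{1-\alpha}$ of Lemma~\ref{discretekernelproperties}\ref{itm:a}, summation by parts gives
\[
D^\alpha_{t_{n-\nu}}(v^{n-\nu})^2 \ge 2v^{n-\nu}D^\alpha_{t_{n-\nu}}v^{n-\nu},
\]
the scalar analogue of Lemma~\ref{dfdoi}. Here I take $v^{n-\nu}$ to be the interpolated value, so that $(v^{n-\nu})^2\le \nu(v^{n-1})^2+(1-\nu)(v^n)^2$ by convexity.

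To handle the terms without a factor $v^{n-\nu}$, I use the standard trick of fixing $k$ with $v^{k}=\max_{0\le j\le n}v^j$. I will then bound $\sum_i\lambda_i^n(v^i)^2\le \Lambda v^{k}\max_i v^i$, and treat $(\eta^n)^2$ and $(\zeta^n)^2$ by adding the auxiliary quantity $\sqrt{2\pi_A t_n^\alpha}\max\eta+(\sum P(\zeta)^2)^{1/2}$ into the unknown. In this way one obtains
\[
D^\alpha_{t_{n-\nu}} v^{n-\nu}\le \tfrac12\sum_i\lambda_i^n v^i+\tfrac12\xi^n+\dots
\]
Alternatively, I may keep the quadratic form and set $w^n=(v^n)^2$, then take square roots at the end; I would try the linear route first.

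\textbf{Step 2: inversion with the complementary kernel.} I multiply the linear inequality at level $j$ by $P^{n,j}_\alpha$ and sum over $j=1,\dots,n$. By Lemma~\ref{discretekernelproperties}\ref{itm:b}, the left side telescopes to $v^n-v^0$, up to the offset $\nu$ term. The term $\lambda_n^n v^n$ (and the $\nu$-weighted $v^n$ part) appears on the right with coefficient $P^{n,n}_\alpha\lambda_n^n\le \pi_A\Gamma(2-\alpha)\Delta t_n^\alpha\lambda^n_n\le 1/\delta$, by \ref{itm:a} and the step condition \eqref{timecondition}. It can therefore be absorbed into the left side, and this absorption produces the factor $C_\delta=\delta/(\delta-1)$.

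The forcing terms then give $\sum_i P^{n,i}_\alpha\xi^i$. For $\eta$, I use $\sum_j P^{n,j}_\alpha\le \pi_A t_n^\alpha/\Gamma(1+\alpha)$ from \ref{itm:c} with $m=1$, which yields the $(2\pi_A t_n^\alpha)^{1/2}$ factor; for $\zeta$, the corresponding term appears directly.

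\textbf{Step 3: Mittag-Leffler bound.} The remaining inequality reads
\[
v^n\le C_\delta\Big(\Phi_n+\Lambda\sum_{j=1}^{n-1}P^{n,j}_\alpha v^j\Big),
\]
where $\Phi_n$ is the nondecreasing bracket in \eqref{dfgi2}. I then prove by induction that $v^n\le C_\delta E_\alpha(C_\delta\pi_A\Lambda t_n^\alpha)\Phi_n$. Inserting the induction hypothesis and applying Lemma~\ref{discretekernelproperties}\ref{itm:d} with the constant $C_\delta\pi_A\Lambda$ gives
\[
\Lambda C_\delta\sum_j P^{n,j}_\alpha E_\alpha(\cdot\, t_j^\alpha)\le E_\alpha(\cdot\, t_n^\alpha)-1,
\]
which closes the induction; this uses the hypothesis $\Delta t_{n-1}\le\Delta t_n$.

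\textbf{Main obstacle.} The delicate point is Step 1 combined with the absorption in Step 2. The offset value $v^{n-\nu}$ mixes $v^{n-1}$ and $v^n$, and the maximum-index trick must be arranged so that only $\lambda_n^n$, and not all of $\Lambda$, has to be absorbed. This is precisely what makes the time-step condition milder than the one in \cite{MR3904430}. I would handle it by isolating the $i=n$ term before applying the max argument and by bounding $v^{n-\nu}\le \max(v^{n-1},v^n)$.
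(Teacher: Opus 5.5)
\textbf{There is a genuine gap in Step~1, and Steps~2 and~3 depend on it.}

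Your Step~1 rests on $D^{\alpha}_{t_{n-\nu}}(v^{n-\nu})^2\ge 2v^{n-\nu}D^{\alpha}_{t_{n-\nu}}v^{n-\nu}$. The scalar analogue of Lemma~\ref{dfdoi} actually goes the other way: $2v^{n-\nu}D^{\alpha}_{t_{n-\nu}}v^{n-\nu}\ge D^{\alpha}_{t_{n-\nu}}(v^{n-\nu})^2$. Already for $n=1$, $v^0=0$, $v^1=1$ you have $D^{\alpha}_{t_{1-\nu}}(v^{1-\nu})^2=K^{1,1}_{1-\alpha}$, whereas $2v^{1-\nu}D^{\alpha}_{t_{1-\nu}}v^{1-\nu}=2(1-\nu)K^{1,1}_{1-\alpha}>K^{1,1}_{1-\alpha}$ because $\nu<1/2$. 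So an upper bound on the discrete derivative of the squares, which is all \eqref{dfgi1} gives, yields no upper bound on $D^{\alpha}_{t_{n-\nu}}v^{n-\nu}$.

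The linear inequality you want to feed into Steps~2 and~3 is in fact false in general. Take $v^0=0$, $v^1=v^2=1$, $\lambda^2_1=K^{2,1}_{1-\alpha}$ and all other level-$2$ data zero; level-$1$ data can be chosen, e.g.\ $(\eta^1)^2=K^{1,1}_{1-\alpha}$, so that \eqref{dfgi1} also holds at $n=1$. Then \eqref{dfgi1} holds with equality at $n=2$, yet $D^{\alpha}_{t_{2-\nu}}v^{2-\nu}=K^{2,1}_{1-\alpha}>\frac12\lambda^2_1v^1$. Here $v^{2-\nu}=1$ is already the maximum of the sequence. So no level-wise maximum-index trick can repair this: the obstruction is the direction of the inequality, not the bookkeeping of $\lambda^n_n$ versus $\Lambda$. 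This is why the lemma is formulated for squares.

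\textbf{What the paper does instead.} It never linearizes at the level of $D^{\alpha}_{t_{n-\nu}}$.
\begin{itemize}
\item It multiplies the squared inequality by $P^{n,j}_\alpha$, sums over $j$, and uses Lemma~\ref{discretekernelproperties}\ref{itm:b} to get exactly $(v^n)^2-(v^0)^2$ on the left. There is no ``offset remainder'': $\nu$ lives in the kernel $K^{n,j}_{1-\alpha}$ and only in the factor $v^{j-\nu}$ of $v^{j-\nu}\xi^j$.
\item Linearization happens only inside an induction on $n=m$. Suppose $v^m$ is below $\max_{i<m}v^i$, below $v^0$, or below the $\eta$- or $\zeta$-entries of the bracket in \eqref{dfgi2}. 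Then the claim follows from the induction hypothesis and the monotonicity of that bracket and of $E_\alpha$.
\item Otherwise $(v^i)^2\le v^iv^m$, $v^{j-\nu}\le v^m$, and the $\eta$- and $\zeta$-terms are at most $v^m$ times their entries. One may then divide by $v^m$, which is the telescoped left-hand side itself; this is what makes the division legitimate.
\item Absorbing $P^{m,m}_\alpha\lambda^m_m v^m\le v^m/\delta$ produces $C_\delta$, and the Mittag-Leffler induction closes with \ref{itm:c} and \ref{itm:d}.
\end{itemize}

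Your fallback $w^n=(v^n)^2$ does not avoid this maximum argument. The term $v^{n-\nu}\xi^n=\sqrt{w^{n-\nu}}\,\xi^n$ is not linear in $w$, so recovering $\sum_iP^{j,i}_\alpha\xi^i$ in \eqref{dfgi2} needs the same device.

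Finally, your Step~3 form omits the $j=m$ contribution $P^{m,m}_\alpha\sum_{i<m}\lambda^m_iv^i$. Lemma~\ref{discretekernelproperties}\ref{itm:d}, whose sum stops at $n-1$, does not control it. The paper treats this term separately, via \ref{itm:c} for small $n$ and via the ratio bound \ref{itm:g} otherwise.
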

The following bound for the numerical solution \( u_h^n \) is obtained from this source.

\begin{lemma}
    
[\cite{MR3954448}, Theorem 3.2] The fully discrete Alikhanov-FEM has a unique solution
$u^{n}_h$ for $n = 1,..., N$, and there exists a positive constant $N^*$ such that when $N > N^*$ , one
has
\begin{align*}
\max_{1\leq n \leq N} \| u^{n}_h \|_{L^\infty(\Omega)} \leq C_3,
\end{align*}
where the fixed constant $C_3$ is independent of the mesh. 
\end{lemma}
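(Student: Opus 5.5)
Proving unique solvability and the uniform bound $\max_{1\le n\le N}\|u_h^n\|_{L^\infty(\Omega)}\le C_3$ splits into a linear-algebra step and a stability step.

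\emph{Unique solvability.} At each level $n$ the scheme \eqref{fullydiscrete1_AM}--\eqref{fullydiscrete2_AM} is linear in the unknowns $(u_h^n,\boldsymbol{\sigma}_h^n)$, because the Newton linearization freezes $u_h^{n-1}$ in the coefficient $3(1-\nu)P_h((u_h^{n-1})^2\,\cdot)$. So we only need to show that the homogeneous system has only the trivial solution. We will test \eqref{fullydiscrete1_AM} with $\boldsymbol{w}_h=\kappa^2\boldsymbol{\sigma}_h^{n-\nu}$ and \eqref{fullydiscrete2_AM} with $v_h=u_h^{n-\nu}$, and add. Using $K^{n,n}_{1-\alpha}\ge c\,\Delta t_n^{-\alpha}$ (Lemma \ref{discretekernelproperties}(a) together with (b)), the principal part is bounded below by
\[
(1-\nu)K^{n,n}_{1-\alpha}\|u_h^n\|^2 + \kappa^2(1-\nu)^2\|\boldsymbol{\sigma}_h^n\|^2 - C\bigl(1+\|u_h^{n-1}\|_{L^\infty}^2\bigr)\|u_h^n\|^2 .
\]
Positivity therefore follows once $\Delta t_n^{\alpha}$ is small compared with $(1+\|u_h^{n-1}\|_{L^\infty}^2)^{-1}$. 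This is where the threshold $N>N^*$ enters, and it is where we use induction on the $L^\infty$ bound.

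\emph{Uniform bound.} We proceed by induction. Assume $\|u_h^j\|_{L^\infty}\le C_3$ for $j<n$, with $C_3:=\max_t\|u(t)\|_{L^\infty}+1$ (finite by the uniform bound $u\in L^\infty(\Omega\times J)$). We then carry out the error analysis up to step $n$. Split $u-u_h=(u-R_hu)+(R_hu-u_h)$, with $R_h$ the mixed elliptic projection, and write the error equation for $\theta^n=R_hu^n-u_h^n$. The truncation terms are: the Alikhanov consistency error, the Newton residual of Lemma \ref{trun_l}, and the projection errors. Under the induction hypothesis the difference of the linearized cubic terms is Lipschitz with constant depending on $C_3$. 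Lemma \ref{dfdoi} then gives an inequality of the form \eqref{dfgi1} with $\Lambda$ depending only on $C_3$. Lemma \ref{DFGI}, under the time-step condition \eqref{timecondition} (again guaranteed by $N>N^*$), yields $\|\theta^n\|\le C(C_3)\log(N)(h^2+N^{-\min\{\epsilon\gamma\alpha,2\}})$, and similarly for the flux.

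To close the induction we use the inverse inequality $\|\theta^n\|_{L^\infty}\le Ch^{-d/2}\|\theta^n\|$, plus the $L^\infty$ approximation of $R_hu^n$. This gives $\|u_h^n\|_{L^\infty}\le \|u^n\|_{L^\infty}+o(1)\le C_3$ for $h,N^{-1}$ small. The constant $C_3$ does not grow from step to step, because the Gr\"onwall constant depends only on $C_3$ and not on $n$.

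The main obstacle is the inverse estimate. Converting an $L^2$ error bound into $L^\infty$ costs $h^{-d/2}$, which requires a coupling such as $N^{-\min\{\epsilon\gamma\alpha,2\}}h^{-d/2}\to0$. Alternatively one can avoid the coupling by bounding the discrete Laplacian $\nabla\cdot\boldsymbol{\sigma}_h$ uniformly and using a discrete Sobolev embedding for $d\le3$. I would try the latter first: test with $-\nabla\cdot\boldsymbol{\sigma}_h$-type functions to get a discrete $H^2$ bound, which yields $L^\infty$ bounds independent of the $h$--$N$ relation, consistent with the cited result \cite{MR3954448}.
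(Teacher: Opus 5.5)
Your unique-solvability step is fine, and it is simpler than you make it. Since $\bigl((u_h^{n-1})^2u_h^n,u_h^n\bigr)\ge 0$, the linearized cubic term has a favourable sign, so injectivity at level $n$ only needs $K^{n,n}_{1-\alpha}>1-\nu$, with no $L^\infty$ input.

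The genuine gap is the uniform bound, at exactly the point you flag. Your induction closes through
\[
\|\theta^n\|_{L^\infty(\Omega)}\le Ch^{-d/2}\log(N)\bigl(h^2+N^{-\min\{\epsilon\gamma\alpha,2\}}\bigr).
\]
This is small only under an extra coupling such as $h^{-d/2}N^{-\min\{\epsilon\gamma\alpha,2\}}\log N\to 0$, plus a quasi-uniform mesh for the global inverse inequality. The statement assumes only $N>N^*$ and asserts that $C_3$ is mesh independent, so this route proves a strictly weaker result.

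The remedy you sketch does not close the gap as described. Take $v_h=-\nabla\cdot\boldsymbol{\sigma}_h^{n-\nu}$, and use the first equation, after applying $D^\alpha_{t_{n-\nu}}$, with $\boldsymbol{w}_h=\boldsymbol{\sigma}_h^{n-\nu}$. This gives
\[
D^\alpha_{t_{n-\nu}}\|\boldsymbol{\sigma}_h^{n-\nu}\|^2+\|\nabla\cdot\boldsymbol{\sigma}_h^{n-\nu}\|^2\le C .
\]
You get a uniform discrete $H^1$ bound, but for the divergence only the weighted sum $\sum_{j=1}^nP^{n,j}_\alpha\|\nabla\cdot\boldsymbol{\sigma}_h^{j-\nu}\|^2\le C$. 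Pointwise this gives only $\|\nabla\cdot\boldsymbol{\sigma}_h^{n-\nu}\|^2\le CK^{n,n}_{1-\alpha}\sim C\Delta t_n^{-\alpha}$. A discrete $H^1$ bound does not yield a uniform $L^\infty$ bound for $d\ge 2$. A genuine pointwise discrete $H^2$ bound would require controlling $D^\alpha_{t_{n-\nu}}u_h^{n-\nu}$, or a discrete Laplacian of $P_h\bigl((u_h^{n-1})^2u_h^{n-\nu}\bigr)$ on the discontinuous space $V_h$. You supply neither.

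The paper gives no argument of its own and simply imports the bound from \cite{MR3954448}. The missing idea is a device that makes the spatial error independent of the time step, so that the inverse inequality costs only $h^{2-d/2}$. The usual one in this literature is a temporal--spatial error splitting, which runs as follows.
\begin{itemize}
\item Introduce the time-discrete Alikhanov--Newton solution $U^n\in H_0^1(\Omega)\cap H^2(\Omega)$, with no spatial discretization.
\item Prove at the PDE level, by induction with the discrete fractional Gr\"onwall inequality and the regularity of Theorem~\ref{Regularity_condition-b}, that $\|AU^n\|\le C$. Hence $\|U^n\|_{L^\infty(\Omega)}\le C$ for $d\le 3$ once $N>N^*$.
\item Prove $\|R_hU^n-u_h^n\|\le Ch^2$, where $R_h$ is the mixed elliptic projection of $(U^n,\nabla U^n)$ and $C$ is independent of $N$. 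The induction hypothesis on $\|u_h^{n-1}\|_{L^\infty}$ is recovered from $\|U^{n-1}\|_{L^\infty}+Ch^{2-d/2}$.
\item Conclude $\|u_h^n\|_{L^\infty(\Omega)}\le \|U^n\|_{L^\infty(\Omega)}+Ch^{2-d/2}$, with no relation between $h$ and $N$.
\end{itemize}
Without such a step, your argument establishes the lemma only under a mesh-coupling hypothesis that the statement does not contain.
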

\section{Error analysis}\label{section4}
This section presents auxiliary results and establishes optimal error estimates. At any temporal grid point \(t_{n-\nu}\), the variational problem \((\ref{variation1MFEM}--\ref{variation2})\) implies, for \(1 \leq n \leq N\),
\begin{align}
\label{variationalatt_n1_AM} 
&(\boldsymbol{\sigma}(t_{n-\nu}), \boldsymbol{w}_h) + ( u(t_{n-\nu}),  \nabla \cdot \boldsymbol{w}_h)~ =~0 \quad \forall~ \boldsymbol{w}_h \in \boldsymbol{W}_h,\\
\label{variationalatt_n2_AM}&(D_{t_{n-\nu}}^{\alpha}u(t_{n-\nu}),v_h)-\kappa^2(\nabla \cdot \boldsymbol{\sigma}(t_{n-\nu}),v_h) = (f(u(t_{n-\nu})),v_h)\\
&\hspace{5cm}+ (\Upsilon^{n-\nu}, v_h)\quad \forall\;v_h \in V_h.   
\end{align}
By subtracting equations (\ref{fullydiscrete1_AM}--\ref{fullydiscrete2_AM}) from (\ref{variationalatt_n1_AM}--\ref{variationalatt_n2_AM}), we obtain the following error equation, for $1\leq n \leq N$:
\begin{align}
\label{errorequation1_AM}
&( e_{\boldsymbol{\sigma}}^{n-\nu}, \boldsymbol{w}_h) + (e_u^{n-\nu}, \nabla  \cdot \boldsymbol{w}_h)=\;0 \quad \forall~ \boldsymbol{w}_h \in \boldsymbol{W}_h \\
\label{errorequation2_AM}
&(D_{t_{n-\nu}}^{\alpha}e^{n-\nu}_u,v_h) - \kappa^2(\nabla \cdot e_{\boldsymbol{\sigma}}^{n-\nu},v_h) =\; (\Upsilon^{n-\nu} + f(u(t_{n-\nu}))- u_h^{n-\nu}  \nonumber \\
&\hspace{2cm}+ P_h ( (u_h^{n-1})^3 ) + 3(1-\nu)P_h((u_h^{n-1})^2( u_h^{n} - u_h^{n-1})), v_h) \quad \forall\;v_h \in V_h,   
\end{align}
where $\Upsilon^{n-\nu}:= (D^{\alpha}_{t_{n-\nu}}u(t_{n-\nu}) - \partial^{\alpha}_{t}u(t_{n-\nu}))$. Also, $e^{n-\nu}_u:= u(t_{n-\nu})-u_h^{n-\nu}$ and $e^{n-\nu}_{\boldsymbol{\sigma}}:= \boldsymbol{\sigma}(t_{n-\nu})-\boldsymbol{\sigma}_h^{n-\nu}$ denote the error between the exact solution $u(t_{n-\nu})$ and the approximate solution $u_h^{n-\nu}$ and between the flux $\boldsymbol{\sigma}(t_{n-\nu})$ and its approximation $\boldsymbol{\sigma}_h^{n-\nu}$, respectively, at time level $t = t_{n-\nu}$. To derive optimal error estimates, we further decompose the errors $e^{n-\nu}_u$ and $e^{n-\nu}_{\boldsymbol{\sigma}}$ further as follows:
\begin{align*}
    &e^{n-\nu}_u = \eta^{n-\nu} + \theta^{n-\nu}, \; \eta^{n-\nu} = u(t_{n-\nu})- P_h u(t_{n-\nu}), \; \theta^{n-\nu} =  P_h u(t_{n-\nu}) - u_h^{n-\nu} ,\\
    &e^{n-\nu}_{\boldsymbol{\sigma}} =   \boldsymbol{\zeta}^{n-\nu} + \boldsymbol{\xi}^{n-\nu}, \; \boldsymbol{\zeta}^{n-\nu} = \boldsymbol{\sigma}(t_{n-\nu}) - \Pi_h \boldsymbol{\sigma}(t_{n-\nu}) , \; \boldsymbol{\xi}^{n-\nu} =  \Pi_h \boldsymbol{\sigma}(t_n)  - \boldsymbol{\sigma}_h^{n-\nu}.
\end{align*}
 Since the estimates for the projection errors $\eta^{n-\nu}:=u(t_{n-\nu})- P_h u(t_{n-\nu}) $ and $\boldsymbol{\zeta}^{n-\nu}:=\boldsymbol{\sigma}(t_{n-\nu}) - \Pi_h \boldsymbol{\sigma}(t_{n-\nu}) $ is already known (see (\ref{approxprop})), establishing the final error estimate requires only the estimation of $\theta^{n-\nu}$ and $\boldsymbol{\xi}^{n-\nu}$. Using (\ref{errorequation1_AM}-\ref{errorequation2_AM}), $\theta^{n-\nu}$ and $\boldsymbol{\xi}^{n-\nu}$ satisfy the following relations:
 \begin{align}
     \label{theta1_AM}&({\boldsymbol{\xi}}^{n-\nu} + {\boldsymbol{\zeta}}^{n-\nu}, \boldsymbol{w}_h) + ( \theta^{n-\nu},  \nabla \cdot \boldsymbol{w}_h)~ =~0 \quad \forall~ \boldsymbol{w}_h \in \boldsymbol{W}_h, \\
\label{theta2_AM}
&(D_{t_{n-\nu}}^{\alpha}\theta^{n-\nu},v_h) - \kappa^2(\nabla \cdot {\boldsymbol{\xi}}^{n-\nu},v_h)  =-(D_{t_{n-\nu}}^{\alpha}\eta^{n-\nu},v_h) +(\Upsilon^{n-\nu} - \mathcal{E}^{n-\nu} \nonumber\\
&\hspace{1cm} + (1-\nu-\phi_{n,2})e_u^{n} + (\nu - \phi_{n,1})e^{n-1}_u, v_h)   \quad \forall\;v_h \in V_h, \; 1\leq n \leq N.  
 \end{align}

\begin{lemma}\label{thetaestimate_l2norm_AM}
If for some $\delta>1$, $\max\limits_{1\leq n \leq N}\Delta t_n\leq \left( \delta \pi_A L(1-\nu)^2\Gamma{(2-\alpha)}\right)^{-\frac{1}{\alpha}}$, then $\theta^{n}$ satisfies, for $1 \leq n \leq N$,
\begin{align}\label{L2errorestimateeqn1_AM}
 &\|\theta^{n}\| \leq   C_\delta E_{\alpha}(C_\delta \pi_A\Lambda t_{n}^{\alpha})  \Bigg( \|\theta^0\|  + 2\max_{1\leq j \leq n} \sum_{i=1}^j P_{\alpha}^{j,i}\Big(\|\Upsilon^{i-\nu} \| + \| \mathcal{E}^{i-\nu}\| +\| D^{\alpha}_{t_{i-\nu}}\eta^{i-\nu}\|   \Big) \nonumber \\
 &\hspace{2cm}+ 3\max\{9C_3^4,C_4^2 \nu^{-2}\}\max_{1\leq j \leq n} \left(\sum_{i=1}^j P_{\alpha}^{j,i}\|\eta^{i-\nu}\|^2\right)^{1/2}  \nonumber\\
 &\hspace{2cm} + \kappa^{-2}\max_{1\leq j \leq n}\left( \sum_{i=1}^{j}P_{\alpha}^{j,i} \|\boldsymbol{\zeta}^{i-\nu}\|^2 \right)^{1/2}  \Bigg),
 \end{align}
 where $L= 6 + 27C_3^4$  and $\Lambda = 6\nu^2 + 3C_4^2$.
\end{lemma}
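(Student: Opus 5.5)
The plan is an energy argument: test the error equations \eqref{theta1_AM}--\eqref{theta2_AM} with $\boldsymbol{w}_h=\kappa^2\boldsymbol{\xi}^{n-\nu}$ and $v_h=\theta^{n-\nu}$, cast the result in the form \eqref{dfgi1}, and then apply Lemma~\ref{DFGI}.

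\textbf{Step 1: the energy identity.} Add the two tested equations. The mixed coupling terms $\kappa^2(\theta^{n-\nu},\nabla\cdot\boldsymbol{\xi}^{n-\nu})$ and $-\kappa^2(\nabla\cdot\boldsymbol{\xi}^{n-\nu},\theta^{n-\nu})$ cancel. Lemma~\ref{dfdoi} then gives
\[
\tfrac12 D^\alpha_{t_{n-\nu}}\|\theta^{n-\nu}\|^2+\kappa^2\|\boldsymbol{\xi}^{n-\nu}\|^2
\le -\kappa^2(\boldsymbol{\zeta}^{n-\nu},\boldsymbol{\xi}^{n-\nu})+(R^{n-\nu},\theta^{n-\nu}),
\]
where $R^{n-\nu}$ collects $-D^\alpha_{t_{n-\nu}}\eta^{n-\nu}$, $\Upsilon^{n-\nu}$, $-\mathcal{E}^{n-\nu}$ and the linearization remainders.

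The flux cross term is handled by Young's inequality. It costs $\tfrac{\kappa^2}{2}\|\boldsymbol{\xi}^{n-\nu}\|^2$, which is absorbed on the left, plus $\tfrac{\kappa^2}{2}\|\boldsymbol{\zeta}^{n-\nu}\|^2$. This leftover is where the $\zeta$-type term $(\zeta^n)^2$ in \eqref{dfgi1} originates; the constant in front of it fixes the factor $\kappa^{-2}$ in the final bound after rescaling.

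\textbf{Step 2: the nonlinear remainder.} This is the main obstacle. I would rewrite the nonlinear difference $f(u(t_{n-\nu}))-[\text{discrete nonlinearity}]$ as
\[
\mathcal{E}^{n-\nu}\;+\;\text{(linear combination of } e_u^n \text{ and } e_u^{n-1}\text{)},
\]
with coefficients $\phi_{n,1},\phi_{n,2}$ built from $u^{n-1}$ and $u_h^{n-1}$. The bounds I will rely on are:
\begin{itemize}
\item $\|u\|_{L^\infty}\le C$ from the regularity theory;
\item $\|u_h^{n}\|_{L^\infty}\le C_3$ from the cited lemma on the fully discrete scheme.
\end{itemize}
These give $|\phi_{n,i}|\lesssim 9C_3^4$-type bounds in $L^\infty$.

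Next split $e_u=\eta+\theta$. The $\theta$ parts produce
\[
\lambda_n^n\|\theta^n\|^2+\lambda_{n-1}^n\|\theta^{n-1}\|^2 .
\]
The coefficient of $\|\theta^n\|^2$ comes from $(1-\nu)\theta^n$ inside $\theta^{n-\nu}$. This is why it carries the factor $L(1-\nu)^2$ with $L=6+27C_3^4$, and it is exactly the quantity entering the step restriction \eqref{timecondition}. That identification is what turns the hypothesis on $\max_n\Delta t_n$ into the hypothesis of Lemma~\ref{DFGI}.

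The $\eta$ parts ($\eta^n,\eta^{n-1}$, re-expressed through $\eta^{n-\nu}$ using the constant $C_4$ and $\nu^{-1}$) are placed in the $\zeta$-type term $(\zeta^n)^2$. This produces the constant $\max\{9C_3^4,C_4^2\nu^{-2}\}$. The total $\sum_j\lambda_j^n\le\Lambda=6\nu^2+3C_4^2$ needs to be checked by tracking the Young's inequality weights carefully.

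\textbf{Step 3: the remaining linear residuals.} The terms $\Upsilon$, $\mathcal{E}$ and $D^\alpha\eta$ are kept in the form $\|\theta^{n-\nu}\|\,\xi^n$, with
\[
\xi^n=2\left(\|\Upsilon^{n-\nu}\|+\|\mathcal{E}^{n-\nu}\|+\|D^\alpha_{t_{n-\nu}}\eta^{n-\nu}\|\right).
\]

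\textbf{Step 4: conclusion.} Multiply through by 2 so that the left side is $D^\alpha_{t_{n-\nu}}\|\theta^{n-\nu}\|^2$, and set $v^n=\|\theta^n\|$, $\eta^n\equiv0$. Applying Lemma~\ref{DFGI} with the graded mesh (which supplies $\Delta t_{n-1}\le\Delta t_n$) and the assumed step bound then yields \eqref{L2errorestimateeqn1_AM} directly. The only real bookkeeping is matching the stated constants; the structure follows from Steps 1--3.
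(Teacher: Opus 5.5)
This is essentially the paper's own proof: the same energy test of \eqref{theta1_AM}--\eqref{theta2_AM} with the flux error and $\theta^{n-\nu}$, then Lemma~\ref{dfdoi} and Young's inequality, $L^\infty$ bounds on $\phi_{n,1},\phi_{n,2}$ via $C_3,C_4$, and finally Lemma~\ref{DFGI} with $\lambda_n^n\le L(1-\nu)^2$ supplying the step restriction (your $\kappa^2$ bookkeeping is the correct one where the paper writes $\kappa^{-2}$, and it gives a factor $\kappa\le\kappa^{-2}$ in front of the $\boldsymbol{\zeta}$-term). The constant check you defer inherits the paper's looseness rather than closing exactly: Lemma~\ref{DFGI} needs $\sum_{j=0}^n\lambda_j^n$ including the diagonal $\lambda_n^n$, whereas $6\nu^2+3C_4^2$ is only the $\|\theta^{n-1}\|^2$ coefficient, and $\phi_{n,2}\eta^n+\phi_{n,1}\eta^{n-1}$ is not controlled by $\|\eta^{n-\nu}\|$ alone; enlarging $\Lambda$ by $L(1-\nu)^2$ and keeping $\|\eta^n\|^2+\|\eta^{n-1}\|^2$ in the $(\zeta^n)^2$-slot of \eqref{dfgi1} repairs both without changing the structure or the rates.
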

\begin{proof}
Substitute $\boldsymbol{w}_h = \boldsymbol{\xi}^{n-\nu}$ and $v_h = \theta^{n-\nu}$ in (\ref{theta1_AM}) and (\ref{theta2_AM}), respectively. Then, apply (\ref{theta1_AM}) in (\ref{theta2_AM}) to obtain
\begin{align*}
(D_{t_{n-\nu}}^{\alpha}\theta^{n-\nu},\theta^{n-\nu}) &+ \kappa^{-2}( {\boldsymbol{\xi}}^{n-\nu}, \boldsymbol{\xi}^{n-\nu}) = - \kappa^{-2}( {\boldsymbol{\zeta}}^{n-\nu}, \boldsymbol{\xi}^{n-\nu}) -(D_{t_{n-\nu}}^{\alpha}\eta^{n-\nu},\theta^{n-\nu}) \\
&+ (\Upsilon^{n-\nu} - \mathcal{E}^{n-\nu} + (1-\nu-\phi_{n,2})e_u^{n} + (\nu - \phi_{n,1})e^{n-1}_u, \theta^{n-\nu}).
\end{align*}
Now, an application of Lemma~\ref{dfdoi} and the Cauchy–Schwarz and Young's inequality yields  
\begin{align}
&\frac{1}{2}D_{t_{n-\nu}}^{\alpha}\|\theta^{n-\nu}\|^2 + \kappa^{-2}\|\boldsymbol{\xi}^{n-\nu}\|^2 \leq \kappa^{-2}\|\boldsymbol{\zeta}^{n-\nu}\|\;\|\boldsymbol{\xi}^{n-\nu}\| \nonumber\\
&\hspace{2cm} + \|\Upsilon^{n-\nu} - \mathcal{E}^{n-\nu} -D_{t_n}^{\alpha}\eta^{n-\nu} + \eta^{n-\nu} \|\|  \theta^{n-\nu}\| \nonumber\\
&\hspace{2cm} + \|\theta^{n-\nu} \|^2 +  \|\phi_{n,2}e_u^{n} + \phi_{n,1}e^{n-1}_u  \|\|  \theta^{n-\nu}\| \nonumber \\ 
&\hspace{2cm}\leq \frac{\kappa^{-2}}{2}\|\boldsymbol{\xi}^{n-\nu}\|^2+ \frac{\kappa^{-2}}{2}\|\boldsymbol{\zeta}^{n-\nu}\|^2 \nonumber\\
&\hspace{2cm}+ \|\Upsilon^{n-\nu} - \mathcal{E}^{n-\nu} -D_{t_n}^{\alpha}\eta^{n-\nu} + \eta^{n-\nu} \|\|  \theta^{n-\nu}\| \nonumber\\
&\hspace{2cm}+ \frac{3}{2}\|\theta^{n-\nu} \|^2 +  \frac{1}{2}\|\phi_{n,2}e_u^{n} + \phi_{n,1}e^{n-1}_u  \|^2 \label{L2errorestimateeqn1_ast_AM}
\end{align}
Now, $\|u^k\|_\infty \leq C_1$ and $\|u_h^k\|_\infty \leq C_3$ for $1 \leq k \leq N$ shows $\|\phi_{n,1}\|_\infty \leq (7-6\nu)C_1^2 + (4-3\nu)C_3^2 + (7-6\nu)C_1C_3=:C_4$ and $\|\phi_{n,2}\|_\infty\leq 3(1-\nu)C_3^2$. Thus, by using these bounds in (\ref{L2errorestimateeqn1_ast_AM}), we obtain  
\begin{eqnarray}
D_{t_{n-\nu}}^{\alpha}\|\theta^{n-\nu}\|^2   &\leq &\;   2\|\Upsilon^{n-\nu} - \mathcal{E}^{n-\nu} -D_{t_{n-\nu}}^{\alpha}\eta^{n-\nu}\|\;\|\theta^{n-\nu}\| 
 + \kappa^{-2}\|\boldsymbol{\zeta}^{n-\nu}\|^2 \nonumber \\
 &&+ 6(1-\nu)^2 \|\theta^n\|^2 + 6\nu^2\|\theta^{n-1}\|^2 \nonumber\\
 &&+ 18(1-\nu)^2C_3^4\|\theta^n\|^2 + 3C_4^2\|\theta^{n-1}\|^2  + 3\max\{9C_3^4,C_4^2 \nu^{-2}\}\|\eta^{n-\nu}\|^2.\label{L2errorestimateeqn1_ast4_AM}
\end{eqnarray}
Finally, applying the discrete fractional Gr\"{o}nwall inequality (Theorem~\ref{DFGI}) in (\ref{L2errorestimateeqn1_ast4_AM}) and non-decreasing property of $E_\alpha(\cdot)$ leads to the desired estimate (\ref{L2errorestimateeqn1_AM}).
\end{proof}

We now present the key results necessary to obtain the optimal error estimate for the flux.
\begin{lemma}\label{DiscreteDerivativeDifference_AM} [\cite{tomar2025second}, Lemma 5.4]
For $\phi^n\in L^2(\Omega),\;0\leq n\leq N,$ and $\nu =\frac{\alpha}{2}$, there exists a positive constant $C$ such that for \(1 \leq n \leq N\), the following inequality holds:
\begin{align}
    \left(\sum_{j=1}^n P^{n,j}_{\alpha}\|D_{t_{j-\nu}}^{\alpha} (t_{j-\nu} \phi^{j-\nu}) - t_{j-\nu} D_{t_{j-\nu}}^{\alpha} \phi^{j-\nu}\|^2 \right)^{\frac{1}{2}} \;\leq\; C\; t_n^{1-\frac{\alpha}{2}}\max_{0\leq j \leq n}\|\phi^j\|,
\end{align}
where the positive constant $C$ remains bounded as $\alpha\to 1^{-}$.
\end{lemma}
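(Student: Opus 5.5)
The approach is to write the commutator explicitly in terms of the discrete kernels $K^{j,k}_{1-\alpha}$, bound it pointwise in $j$ by $C\,t_j^{1-\alpha}\max_{0\le k\le j}\|\phi^k\|$ with $C$ bounded as $\alpha\to1^-$, and then sum against $P^{j}_{\alpha}$ using Lemma~\ref{discretekernelproperties}. Throughout, $t_j\phi^j$ denotes the grid sequence $\{t_k\phi^k\}_k$, so that $D^{\alpha}_{t_{j-\nu}}(t_{j-\nu}\phi^{j-\nu})=\sum_{k=1}^j K^{j,k}_{1-\alpha}\nabla(t_k\phi^k)$ by \eqref{Alikhanovscheme}. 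Using $\nabla(t_k\phi^k)=t_k\nabla\phi^k+\Delta t_k\,\phi^{k-1}$, the commutator becomes
\begin{align*}
R^j:=D^{\alpha}_{t_{j-\nu}}(t_{j-\nu}\phi^{j-\nu})-t_{j-\nu}D^{\alpha}_{t_{j-\nu}}\phi^{j-\nu}
=\sum_{k=1}^{j}K^{j,k}_{1-\alpha}(t_k-t_{j-\nu})\nabla\phi^k+\sum_{k=1}^{j}K^{j,k}_{1-\alpha}\Delta t_k\,\phi^{k-1}.
\end{align*}
The second sum is bounded by $\max_k\|\phi^k\|\sum_{k}K^{j,k}_{1-\alpha}\Delta t_k$. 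From \eqref{discretekernel}--\eqref{discretecoefficientb}, the quantity $\sum_k K^{j,k}_{1-\alpha}\Delta t_k$ is a quadrature of $\int_0^{t_{j-\nu}}\omega_{1-\alpha}(t_{j-\nu}-s)\,ds=\omega_{2-\alpha}(t_{j-\nu})$. It is therefore at most $C\,t_j^{1-\alpha}$, where $C$ is controlled by $1/\Gamma(2-\alpha)$ and is thus $\alpha$-robust; the $b$-corrections are handled via the mild step-ratio bound on graded meshes.

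For the first sum I will apply Abel summation. The boundary terms are $K^{j,j}_{1-\alpha}(t_j-t_{j-\nu})\phi^j$ and $K^{j,1}_{1-\alpha}(t_1-t_{j-\nu})\phi^0$. The former is of size $\nu\Delta t_j K^{j,j}_{1-\alpha}\lesssim \Delta t_j^{1-\alpha}$; the latter is controlled because $K^{j,1}_{1-\alpha}t_{j}\lesssim t_j^{1-\alpha}$. The interior terms are $\sum_k\phi^k\big[K^{j,k}_{1-\alpha}(t_k-t_{j-\nu})-K^{j,k+1}_{1-\alpha}(t_{k+1}-t_{j-\nu})\big]$. I split each bracket as
\begin{align*}
(K^{j,k+1}_{1-\alpha}-K^{j,k}_{1-\alpha})(t_{j-\nu}-t_{k+1})-K^{j,k}_{1-\alpha}\Delta t_{k+1}.
\end{align*}
The second piece again gives $\sum K\Delta t\lesssim t_j^{1-\alpha}$. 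The first piece is the delicate one: a crude bound $|t_{j-\nu}-t_{k+1}|\le t_j$ followed by telescoping would produce $K^{j,j}_{1-\alpha}t_j$, which is far too large. Instead I expand $t_{j-\nu}-t_{k+1}=\sum_{l=k+2}^{j}\Delta t_l+(1-\nu)\Delta t_j$ and interchange the sums. Since $K^{j,k}_{1-\alpha}$ is positive and increasing in $k$ by Lemma~\ref{discretekernelproperties}\ref{itm:a}, the inner telescoping sum $\sum_{k\le l-2}(K^{j,k+1}_{1-\alpha}-K^{j,k}_{1-\alpha})$ is at most $K^{j,l-1}_{1-\alpha}$. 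This leaves $\sum_l K^{j,l-1}_{1-\alpha}\Delta t_l\lesssim\sum_l K^{j,l}_{1-\alpha}\Delta t_l\lesssim t_j^{1-\alpha}$, using $\Delta t_{l-1}\le\Delta t_l\le \mu\,\Delta t_{l-1}$. Altogether this yields $\|R^j\|\le C\,t_j^{1-\alpha}\max_{0\le k\le j}\|\phi^k\|$.

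Finally, since $2-2\alpha\ge0$, we have $t_j^{2-2\alpha}\le t_n^{2-2\alpha}$, and Lemma~\ref{discretekernelproperties}\ref{itm:c} with $m=1$ gives $\sum_{j=1}^n P^{n,j}_\alpha\le \pi_A\,\omega_{1+\alpha}(t_n)$, which is $\alpha$-robust. Hence
\begin{align*}
\sum_{j=1}^n P^{n,j}_\alpha\|R^j\|^2\le C\,t_n^{2-2\alpha}\,t_n^{\alpha}\max_{0\le k\le n}\|\phi^k\|^2=C\,t_n^{2-\alpha}\max_{0\le k\le n}\|\phi^k\|^2,
\end{align*}
and taking square roots proves the lemma. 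I expect the main obstacle to be the Abel-summation step, together with the $b$-corrections in $K^{j,k}_{1-\alpha}$. One has to ensure that the cancellation yields $\sum_k K^{j,k}_{1-\alpha}\Delta t_k$ rather than $K^{j,j}_{1-\alpha}t_j$. One also has to check that the step-ratio constants and $\nu=\alpha/2$ do not introduce factors that blow up as $\alpha\to1^-$.
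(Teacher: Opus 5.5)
Your proposal is correct. The paper gives no proof of this lemma; it imports it from \cite{tomar2025second}, so your argument stands as a self-contained derivation. The Abel summation, the splitting
\[
c_k-c_{k+1}=\bigl(K^{j,k+1}_{1-\alpha}-K^{j,k}_{1-\alpha}\bigr)(t_{j-\nu}-t_{k+1})-K^{j,k}_{1-\alpha}\Delta t_{k+1},
\]
and the telescoping against the increasing kernel together give $\|R^j\|\le C\,\omega_{2-\alpha}(t_{j-\nu})\max_{0\le k\le j}\|\phi^k\|$. This is the discrete analogue of $\partial_t^\alpha(t\phi)-t\partial_t^\alpha\phi=\alpha\mathcal{I}^{1-\alpha}\phi+(1-\alpha)\omega_{2-\alpha}(t)\phi(0)$. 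Part (c) of Lemma~\ref{discretekernelproperties} with $m=1$ then finishes the proof exactly as you describe.

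Two points can be tightened.

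First, $\sum_{k=1}^{j}K^{j,k}_{1-\alpha}\Delta t_k$ is not just ``a quadrature'' of $\omega_{2-\alpha}(t_{j-\nu})$; it equals it exactly, because the Alikhanov formula reproduces linear functions. The $a$-coefficients give $\sum_k a^{j,k}_{1-\alpha}\Delta t_k=\int_0^{t_{j-\nu}}\omega_{1-\alpha}(t_{j-\nu}-s)\,ds$. Each $b$-coefficient $b^{j,k}_{1-\alpha}$ is multiplied by $\Delta t_{k+1}/\mu_{k+1}-\Delta t_k=0$. So no step-ratio argument is needed for the $b$-corrections.

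Moreover, every other quantity you use reduces to this single sum through positivity and monotonicity of $K^{j,k}_{1-\alpha}$ (part (a) of Lemma~\ref{discretekernelproperties}):
\begin{itemize}
\item $K^{j,j}_{1-\alpha}\Delta t_j\le\sum_k K^{j,k}_{1-\alpha}\Delta t_k$;
\item $K^{j,1}_{1-\alpha}t_j=\sum_k K^{j,1}_{1-\alpha}\Delta t_k\le\sum_k K^{j,k}_{1-\alpha}\Delta t_k$;
\item $\sum_k K^{j,k}_{1-\alpha}\Delta t_{k+1}$ and $\sum_l K^{j,l-1}_{1-\alpha}\Delta t_l$ are bounded by raising the kernel index via $K^{j,k}_{1-\alpha}<K^{j,k+1}_{1-\alpha}$.
\end{itemize}
Hence the pointwise constant is an absolute number times $1/\Gamma(2-\alpha)$. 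The $\alpha$-robustness is then immediate, and $\mu$ (hence $\gamma$) never enters explicitly; your appeal to $\Delta t_l\le\mu\,\Delta t_{l-1}$ is superfluous.

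Second, your expansion of $t_{j-\nu}-t_{k+1}$ has the wrong upper limit. For $k\le j-2$ it is $\sum_{l=k+2}^{j-1}\Delta t_l+(1-\nu)\Delta t_j$, and for $k=j-1$ it equals $-\nu\Delta t_j$. Since you only use the expression as an upper bound for $|t_{j-\nu}-t_{k+1}|$, the slip does no harm.
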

In the next result, we have obtained the estimate $\boldsymbol{\xi}$.
\begin{lemma}\label{xiestimate_l2norm_AM}
If for some $\delta>1$, $\max\limits_{1\leq n \leq N}\Delta t_n\leq \left( \delta \pi_A  L (1-\nu)^2 \Gamma{(2-\alpha)}\right)^{-\frac{1}{\alpha}}$, then $\boldsymbol{\xi}^{n}$ satisfies, for $ 1\leq n\leq N$,
\begin{align}\label{L2xierrorestimateeqn1_AM}
 &\|t_n{\boldsymbol{\xi}}^n\|  \leq \;2C_\delta E_{\alpha}(C_\delta \pi_A \Lambda t_{n}^{\alpha})\max_{1\leq j \leq n}\Bigg( \sum_{i=1}^j P_{\alpha}^{j,i}\|D_{t_{i-\nu}}^{\alpha}(  t_{i-\nu}{\boldsymbol{\zeta}}^{i-\nu})\| \nonumber\\
 &\hspace{.5cm}+  \left(2\pi_A t_n^\alpha \right)^{\frac{1}{2}} \kappa^2 (\max\{1+9C_3^4,1+C_4^2 \nu^{-2}\})^2 (\|t_{n-\nu}\theta^{n-\nu}\|^2 + \|t_{n-\nu}\eta^{n-\nu}\|^2 )  \nonumber \\
&\hspace{.5cm} + \kappa^2 \Big(\sum_{i=1}^j P_{\alpha}^{j,i}\Big(\|T^{i-\nu}\| + \|t_{i-\nu}\Upsilon^{i-\nu} \| + \| t_{i-\nu} \mathcal{E}^{i-\nu}\| +\|D_{t_{i-\nu}}^{\alpha}(t_{i-\nu}\eta^{i-\nu})\| \Big)^2 \Big)^{\frac{1}{2}}\Bigg),
 \end{align}
where $T^{n-\nu}:= D_{t_{n-\nu}}^{\alpha} (t_{n-\nu} e_u^{n-\nu}) - t_{n-\nu} D_{t_{n-\nu}}^{\alpha} e_u^{n-\nu}$ and $L$ is as given in Lemma~\ref{thetaestimate_l2norm_AM}. 
\end{lemma}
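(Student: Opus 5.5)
We estimate $\boldsymbol{\xi}$ by multiplying the error equations by $t_{n-\nu}$ and testing so that the flux error, not $\theta$, sits under the discrete Caputo operator. Multiply (\ref{theta1_AM}) by $t_{n-\nu}$ and apply the discrete operator $D^{\alpha}_{t_{n-\nu}}$ to the resulting time sequence. This is legitimate because (\ref{theta1_AM}) holds at every level and is linear in time. It yields
\[
(D^{\alpha}_{t_{n-\nu}}(t_{n-\nu}\boldsymbol{\xi}^{n-\nu}),\boldsymbol{w}_h)+(D^{\alpha}_{t_{n-\nu}}(t_{n-\nu}\boldsymbol{\zeta}^{n-\nu}),\boldsymbol{w}_h)+(D^{\alpha}_{t_{n-\nu}}(t_{n-\nu}\theta^{n-\nu}),\nabla\cdot\boldsymbol{w}_h)=0 .
\]
Next take $\boldsymbol{w}_h=t_{n-\nu}\boldsymbol{\xi}^{n-\nu}$ here, and $v_h=D^{\alpha}_{t_{n-\nu}}(t_{n-\nu}\theta^{n-\nu})$ in (\ref{theta2_AM}) multiplied by $t_{n-\nu}$. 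The two divergence couplings $(D^\alpha(t\theta),\nabla\cdot t\boldsymbol{\xi})$ then cancel after multiplying the first identity by $\kappa^2$.

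There is one mismatch. Equation (\ref{theta2_AM}) multiplied by $t_{n-\nu}$ contains $t_{n-\nu}D^{\alpha}\theta^{n-\nu}$ rather than $D^{\alpha}(t_{n-\nu}\theta^{n-\nu})$. We rewrite it as $D^{\alpha}(t\theta)-[D^{\alpha}(t\theta)-tD^{\alpha}\theta]$, and do the same with $\eta$. The commutator pieces for $\theta$ and $\eta$ combine into $T^{n-\nu}$, which is exactly the quantity appearing in the statement. After this rewriting we obtain
\[
\kappa^2(D^{\alpha}(t\boldsymbol{\xi}),t\boldsymbol{\xi})+\|D^{\alpha}(t\theta)\|^2=-\kappa^2(D^{\alpha}(t\boldsymbol{\zeta}),t\boldsymbol{\xi})+\big(T+t\Upsilon-t\mathcal{E}-D^{\alpha}(t\eta)+t R,\;D^{\alpha}(t\theta)\big),
\]
where $R$ collects the linearized nonlinear terms $(1-\nu-\phi_{n,2})e_u^n+(\nu-\phi_{n,1})e_u^{n-1}$.

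We bound the right-hand side in two groups:
\begin{itemize}
\item The second pairing is absorbed by Young's inequality into $\|D^{\alpha}(t\theta)\|^2$. This leaves $\tfrac12\|T+t\Upsilon-t\mathcal{E}-D^\alpha(t\eta)+tR\|^2$.
\item The bounds $\|\phi_{n,1}\|_\infty\le C_4$ and $\|\phi_{n,2}\|_\infty\le 3(1-\nu)C_3^2$ from the proof of Lemma~\ref{thetaestimate_l2norm_AM} give $\|t R\|\lesssim \max\{1+9C_3^4,1+C_4^2\nu^{-2}\}(\|t\theta\|+\|t\eta\|)$. This is the source of the middle term in (\ref{L2xierrorestimateeqn1_AM}).
\end{itemize}

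On the left, Lemma~\ref{dfdoi} gives $\frac{\kappa^2}{2}D^{\alpha}\|t_{n-\nu}\boldsymbol{\xi}^{n-\nu}\|^2$. Dividing by $\kappa^2/2$ yields an inequality of the form (\ref{dfgi1}) with $v^{n}=\|t_n\boldsymbol{\xi}^n\|$ and the following data:
\begin{itemize}
\item $\xi^n=2\|D^\alpha(t\boldsymbol{\zeta})\|$;
\item $(\eta^n)^2$ given by the $\theta,\eta$ term evaluated at level $n-\nu$;
\item $(\zeta^n)^2=\kappa^{-2}\|T+t\Upsilon+t\mathcal{E}+D^\alpha(t\eta)\|^2$;
\item $\lambda_i^n$ built from the same constants as before, so the time-step hypothesis (\ref{timecondition}) reduces to the assumed bound on $\max\Delta t_n$.
\end{itemize}
Lemma~\ref{DFGI} then gives (\ref{L2xierrorestimateeqn1_AM}) directly, and $v^0=0$ because of the weight $t_0=0$. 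The $\kappa$ factors shown in the statement come from this normalization, up to generic constants.

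The main obstacle is the commutator step. The product rule for the nonuniform Alikhanov operator fails, so the commutator cannot be handled exactly. Instead it is kept inside $T^{n-\nu}$, to be bounded later via Lemma~\ref{DiscreteDerivativeDifference_AM}, and the argument must be arranged so that no uncontrolled $D^{\alpha}\theta$ remains. A second delicate point is applying $D^\alpha$ to (\ref{theta1_AM}): this requires the weighted identity to hold at all levels $j\le n$, including $j=1$, where $t_0=0$ removes the initial term.
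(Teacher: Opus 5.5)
Your proposal reproduces the paper's proof essentially step for step: weight (\ref{theta1_AM}) by $t_{n-\nu}$ and apply $D^{\alpha}_{t_{n-\nu}}$, test with $t_{n-\nu}\boldsymbol{\xi}^{n-\nu}$ and $D^{\alpha}_{t_{n-\nu}}(t_{n-\nu}\theta^{n-\nu})$ so the divergence couplings cancel, collect the commutators into $T^{n-\nu}$, absorb $\|D^{\alpha}_{t_{n-\nu}}(t_{n-\nu}\theta^{n-\nu})\|^2$ by Young's inequality, bound the linearization terms through $\phi_{n,1},\phi_{n,2}$, and close with Lemma~\ref{DFGI} using $v^0=0$. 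The only real difference is the $\kappa$ bookkeeping: dividing by $\kappa^2/2$ as you do puts a factor $\kappa^{-2}$ in $(\zeta^n)^2$ (hence $\kappa^{-1}$ after the Gr\"{o}nwall step), whereas the paper writes $2\kappa^2$ at that point in (\ref{newtheta4_AM}), which is where the statement's positive power of $\kappa$ comes from, so your argument yields the stated bound only with $\kappa$-dependent constants.
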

\begin{proof}
Multiply $t_{n-\nu}$ in (\ref{theta1_AM}) and then apply the discrete fractional derivative to obtain, for \( 1 \leq n \leq N \):
\begin{align}\label{newtheta2_AM}
   & (D_{t_{n-\nu}}^{\alpha} (t_{n-\nu}{\boldsymbol{\xi}}^{n-\nu}+t_{n-\nu}{\boldsymbol{\zeta}}^{n-\nu}), \boldsymbol{w}_h)+ (D_{t_{n-\nu}}^{\alpha} (t_{n-\nu}\theta^{n-\nu}),  \nabla \cdot \boldsymbol{w}_h)~ =~0 \;\; \forall~ \boldsymbol{w}_h \in \boldsymbol{W}_h.
\end{align}
Further, by selecting $\boldsymbol{w}_h = t_{n-\nu}\boldsymbol{\xi}^{n-\nu}$ and $v_h = D_{t_{n-\nu}}^{\alpha} (t_{n-\nu}\theta^{n-\nu}
)$ in (\ref{newtheta2_AM})  and (\ref{theta2_AM}), respectively, substitute (\ref{newtheta2_AM}) in (\ref{theta2_AM}) to obtain the following: 
\begin{align*}
&(D_{t_{n-\nu}}^{\alpha}(t_{n-\nu}\theta^{n-\nu}
),D_{t_{n-\nu}}^{\alpha} (t_{n-\nu}\theta^{n-\nu}
) ) + \kappa^{2}(D_{t_{n-\nu}}^{\alpha}  t_{n-\nu}{\boldsymbol{\xi}}^{n-\nu},  t_{n-\nu}\boldsymbol{\xi}^{n-\nu} ) \\
&\hspace{1.5cm} = - \kappa^{2}(D_{t_{n-\nu}}^{\alpha}  t_{n-\nu}{\boldsymbol{\zeta}}^{n-\nu}, t_{n-\nu}{\boldsymbol{\xi}}^{n-\nu})\\
&\hspace{1.5cm} +\Big( - D_{t_{n-\nu}}^{\alpha}(t_{n-\nu}\eta^{n-\nu}) + t_{n-\nu}\Upsilon^{n-\nu} - t_{n-\nu}\mathcal{E}^{n-\nu} + t_{n-\nu}(1-\nu-\phi_{n,2})e_u^{n} \\
&\hspace{1.5cm}+ t_{n-\nu}(\nu - \phi_{n,1})e^{n-1}_u + T^{n-\nu},D_{t_{n-\nu}}^{\alpha} (t_{n-\nu}\theta^{n-\nu}
)\Big).
\end{align*}
Now, an appeal to Lemma~\ref{dfdoi} and the Cauchy-Schwarz inequality yields
\begin{align}
&\nonumber\|D_{t_{n-\nu}}^{\alpha}(t_{n-\nu}\theta^{n-\nu}
)\|^2 + \frac{\kappa^{2}}{2}D_{t_{n-\nu}}^{\alpha}\| t_{n-\nu}{\boldsymbol{\xi}}^{n-\nu}\|^2 \leq\; \kappa^{2}\| D_{t_{n-\nu}}^{\alpha} ( t_{n-\nu}{\boldsymbol{\zeta}}^{n-\nu})\|\| t_{n-\nu}{\boldsymbol{\xi}}^{n-\nu}\|  \nonumber\\
&\;+\Big(\|T^{n-\nu}\|+\|t_{n-\nu}\Upsilon^{n-\nu}\| + \| t_{n-\nu}(1-\nu-\phi_{n,2})e_u^{n} + t_{n-\nu}(\nu - \phi_{n,1})e^{n-1}_u  \| \nonumber\\
&\;+ \|t_{n-\nu} \mathcal{E}^{n-\nu} \| + \| D_{t_{n-\nu}}^{\alpha}(t_{n-\nu}\eta^{n-\nu})\| \Big)\|D_{t_{n-\nu}}^{\alpha} (t_{n-\nu}\theta^{n-\nu}
)\|. \label{newtheta3_AM}
\end{align}
Applying Young's inequality in (\ref{newtheta3_AM}), we can derive the following estimate:
\begin{align}
&D_{t_{n-\nu}}^{\alpha}\| t_{n-\nu}{\boldsymbol{\xi}}^{n-\nu}\|^2 \; \leq  2\| D_{t_{n-\nu}}^{\alpha} ( t_{n-\nu}{\boldsymbol{\zeta}}^{n-\nu})\|\| t_{n-\nu}{\boldsymbol{\xi}}^{n-\nu}\| \nonumber\\
&\hspace{0.4cm} + 2\kappa^{2}\| t_{n-\nu}(1-\nu-\phi_{n,2})e_u^{n} + t_{n-\nu}(\nu - \phi_{n,1})e^{n-1}_u  \|^2\nonumber \\
&\hspace{0.4cm}+ 2\kappa^{2}\Big(\|T^{n-\nu}\|+\|t_{n-\nu}\Upsilon^{n-\nu}\| + \| t_{n-\nu} \mathcal{E}^{n-\nu} \|  + \| D_{t_{n-\nu}}^{\alpha}(t_{n-\nu}\eta^{n-\nu})\| \Big)^2  \label{newtheta4_AM}
\end{align}
Using the inequality
$\|t_{n-\nu}\phi_{n,2}e_u^{n} + t_{n-\nu}\phi_{n,1}e^{n-1}_u\|
\leq \max\{9C_3^4,C_4^2 \nu^{-2}\}\|t_{n-\nu}e^{n-\nu}\|$ and then apply the discrete fractional Gr\"{o}nwall inequality (Theorem \ref{DFGI}) can be applied to (\ref{newtheta4_AM}) to obtain the required estimate (\ref{L2xierrorestimateeqn1_AM}). This completes the proof.
\end{proof}
We will now present the estimate for the truncation error.
\begin{lemma}\label{truncationerror_AM} 
If the grading parameter $\gamma$ satisfies $1\;\leq\;\gamma\;\leq\;\frac{4}{\alpha} $ then under the assumptions in Theorem \ref{Regularity_condition-b}, there holds
\begin{enumerate}\label{truncationI_AM}
    
\item[\namedlabel{trunk:a}{(a)}] $\sum_{j=1}^nP_{\alpha}^{n,j}\|\Upsilon^{j-\nu}\|  \leq C \log_e(n+2)\;N^{-\min\{\gamma\alpha,\; 2 \} }, \quad 1\leq n \leq N,$    \item[\namedlabel{trunk:b}{(b)}]  $\left(\sum_{j=1}^n P_{\alpha}^{n,j}\|t_{j-\nu}\Upsilon^{j-\nu}\|^2\right)^{\frac{1}{2}}  \leq C\;t_n^{1-\alpha/2} N^{-\min\{\gamma\alpha, \;2 \} }, \quad 1\leq n \leq N,$ 
     \item[\namedlabel{trunk:c}{(c)}] $\sum_{j=1}^nP_{\alpha}^{n,j}\|\mathcal{E}^{j-\nu}\|  \leq C \log_e(n+2)\;N^{-\min\{\epsilon\gamma\alpha,\; 2\} }, \quad 1\leq n \leq N,$ and
    \item[\namedlabel{trunk:d}{(d)}] $\left(\sum_{j=1}^n P_{\alpha}^{n,j}\|t_{j-\nu}\mathcal{E}^{j-\nu}\|^2\right)^{{1}/{2}}  \leq C\;t_n^{1-\alpha/2} N^{-\min\{\epsilon\gamma\alpha, \;2 \} }, \quad 1\leq n \leq N,$
\end{enumerate}
where $C$ is a positive constant which remains bounded as $\alpha\to 1^{-}$.
\end{lemma}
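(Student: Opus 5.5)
The plan is to combine pointwise bounds on the local truncation errors with the summation properties of the complementary kernels $P_\alpha^{n,j}$ in Lemma~\ref{discretekernelproperties}.

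\emph{Pointwise bound for $\Upsilon$.} First I will establish the standard Alikhanov truncation bound on the graded mesh. Using $t\|\partial_t u\|+t^2\|\partial_t^2u\|+t^3\|\partial_t^3u\|\le Ct^\alpha$ from Theorem~\ref{Regularity_condition-b}, the known local truncation analysis (as in \cite{MR3904430,MR4270344}) gives
\begin{align*}
\|\Upsilon^{j-\nu}\|\le C\,t_{j-\nu}^{-\alpha}\,N^{-\min\{\gamma\alpha,\,2\}}, \qquad 1\le j\le N.
\end{align*}
The splitting is standard. On $[t_0,t_1]$ the linear interpolation error is bounded by $\int_0^{t_1}s\|u''\|\,ds$-type quantities, which give $t_1^\alpha\sim N^{-\gamma\alpha}$ scaled by the kernel. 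On later intervals one uses the quadratic-interpolation remainder with $\|u'''\|\le Ct^{\alpha-3}$, together with the grading relations $\Delta t_k\le C\gamma N^{-1}t_k^{1-1/\gamma}$. Working through these yields the factor $N^{-\min\{\gamma\alpha,2\}}$ times $t_{j-\nu}^{-\alpha}$, with constants bounded as $\alpha\to1^-$. This step uses $\gamma\le 4/\alpha$ to keep the exponent bookkeeping uniform.

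\emph{Parts (a) and (b).} For (a) I insert the pointwise bound and apply Lemma~\ref{discretekernelproperties}\ref{itm:f}. Since $t_{j-\nu}\ge(1-\nu)t_j\ge t_j/2$, this gives
\begin{align*}
\sum_j P_\alpha^{n,j}t_{j-\nu}^{-\alpha}\le C\log_e(n+2).
\end{align*}
For (b), the weight $t_{j-\nu}^2$ cancels the singularity, leaving
\begin{align*}
\sum_j P_\alpha^{n,j}t_j^{2-2\alpha}N^{-2\min\{\gamma\alpha,2\}}.
\end{align*}
Writing $t_j^{2-2\alpha}\le t_n^{2-\alpha}t_j^{-\alpha}$ would produce a logarithm, so I avoid that. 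Instead I use $t_j^{2-2\alpha}\le T^{1-\alpha}t_j^{1-\alpha}$ and then Lemma~\ref{discretekernelproperties}\ref{itm:e} with $\beta=1$ (or $\beta\to1$; alternatively \ref{itm:c} with $m=1$). This gives $\sum_j P^{n,j}_\alpha t_j^{1-\alpha}\le C t_n$, and hence a bound $C t_n^{1/2}$ after taking the square root. To match the stated $t_n^{1-\alpha/2}$ more carefully, I split $t_j^{2-2\alpha}=t_j^{2-\alpha}\cdot t_j^{-\alpha}$ with $t_j^{2-\alpha}\le t_n^{2-\alpha}$, and use \ref{itm:e} with $\beta\in(0,1)$ chosen so that the residual kernel sum is $O(1)$. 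This produces $t_n^{1-\alpha/2}$ uniformly in $\alpha$.

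\emph{Parts (c) and (d).} These are identical in structure. From the proof of Lemma~\ref{trun_l} one in fact has the sharper pointwise bound
\begin{align*}
\|\mathcal{E}^{j-\nu}\|\le C\,t_j^{-\alpha}\cdot t_j^{\alpha}\,(\Delta t_j\, t_{j-1}^{\epsilon\alpha/2-1})^2,
\end{align*}
which on the graded mesh is bounded by $C t_{j}^{-\alpha}N^{-\min\{\epsilon\gamma\alpha,2\}}$ using $t_j^\alpha\le T^\alpha$ and the same grading estimates. The same two summation arguments then apply.

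The main obstacle is the pointwise bound for $\Upsilon^{j-\nu}$ with an $\alpha$-robust constant. The nonuniform Alikhanov quadrature error requires careful handling of the $b$-coefficients, for which I would rely on the existing analysis cited above rather than redo it. A second delicate point is extracting exactly $t_n^{1-\alpha/2}$ in (b) and (d) without a logarithm, which I will resolve by the choice of $\beta$ in \ref{itm:e}.
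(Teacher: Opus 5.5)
Your overall route matches the paper's. You take a cited pointwise Alikhanov bound $\|\Upsilon^{j-\nu}\|\le Ct_{j-\nu}^{-\alpha}N^{-\min\{\gamma\alpha,2\}}$ (the paper takes it from \cite{MR3936261}) and the pointwise bound of Lemma~\ref{trun_l} for $\mathcal{E}$, then apply the summation properties of $P_\alpha^{n,j}$. Your treatment of (a) and (c), via $t_{j-\nu}\ge(1-\nu)t_j\ge t_j/2$ and part (f) of Lemma~\ref{discretekernelproperties}, is fine.

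The step that fails as written is the one you finally propose for (b), and hence for (d). After splitting $t_j^{2-2\alpha}=t_j^{2-\alpha}\,t_j^{-\alpha}$ and using $t_j^{2-\alpha}\le t_n^{2-\alpha}$, the residual sum is $\sum_{j}P_\alpha^{n,j}t_j^{-\alpha}$, and it has no free $\beta$ to tune. Part (f) bounds it only by $C\log_e(n+2)$, which gives $t_n^{1-\alpha/2}\sqrt{\log_e(n+2)}$ rather than the claimed bound. An $O(1)$ bound is available only by letting $\beta\to0$ in part (e). That yields $\pi_A\Gamma(1-\alpha)$, which blows up as $\alpha\to1^-$ and so violates the required $\alpha$-robustness. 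Your first route does not help either: it yields $Ct_n^{1/2}$, which is strictly weaker than $Ct_n^{1-\alpha/2}$ near $t=0$ because $1-\alpha/2>1/2$.

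The repair is one line and is in effect what the paper does. Since $2-2\alpha>0$, use $t_j^{2-2\alpha}\le t_n^{2-2\alpha}$ together with the total-mass bound from part (c) with $m=1$, namely $\sum_{j=1}^nP_\alpha^{n,j}\le\pi_A t_n^{\alpha}/\Gamma(1+\alpha)\le 2\pi_A t_n^{\alpha}$. This gives $\sum_{j}P_\alpha^{n,j}t_j^{2-2\alpha}\le 2\pi_A t_n^{2-\alpha}$, hence $t_n^{1-\alpha/2}$ after the square root, with a robust constant. Equivalently, bound $t_j^{2-2\alpha}\le t_n^{3/2-\alpha}t_j^{1/2-\alpha}$ and apply (e) with the fixed value $\beta=1/2$. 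Its constant $\pi_A\Gamma(3/2-\alpha)/\Gamma(3/2)$ stays bounded as $\alpha\to1^-$; this is presumably what you meant.

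Two smaller points on (c) and (d):
\begin{itemize}
\item Your pointwise formula involves $t_{j-1}^{\epsilon\alpha/2-1}$, which is undefined at $j=1$. That case must come from the separate estimate $\|\mathcal{E}^{1-\nu}\|\le Ct_1^{\epsilon\alpha}\le CN^{-\epsilon\gamma\alpha}$ in Lemma~\ref{trun_l}.
\item Your grading computation actually gives $\|\mathcal{E}^{j-\nu}\|\le CN^{-\min\{\epsilon\gamma\alpha,2\}}$ uniformly in $j$. The inserted factor $t_j^{-\alpha}t_j^{\alpha}$ is therefore unnecessary, and (d) follows from the same mass bound.
\end{itemize}
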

\begin{proof}
To prove \ref{trunk:a}, recall the following estimate from Stynes et al.  \cite{MR3936261} (see, Lemma~7 and Remark 4), which yields
\begin{align}\label{trunc1}
    \|\Upsilon^{n-\nu}\| &\leq C\;t_{n-\nu}^{-\alpha} N^{-\min\{\gamma\alpha ,\;2 \}}\leq C\;{n}^{-\gamma\alpha}N^{\gamma \alpha} N^{-\min\{\gamma\alpha ,\;2 \}} \leq C\;t_{n}^{-\alpha} N^{-\min\{\gamma\alpha ,\;2 \}}.
    \end{align}
Here, the positive constant \( C \) remains bounded as \( \alpha \) approaches \( 1^- \). We can then apply Lemma \ref{discretekernelproperties}\ref{itm:f}. Now, estimate \ref{trunk:b} is obtained by applying Lemma~\ref{discretekernelproperties}\ref{itm:e} with $m=1$ to the following estimate derived from (\ref{trunc1})
 \begin{align*}
      \|t_{n-\nu}\Upsilon^{n-\nu}\|^2 \;&\leq\; C^2 t_n^{2-2\alpha}\;  N^{-2\min\{\gamma\alpha ,\;2\}}. 
 \end{align*}
To establish \ref{trunk:c}, recall the following estimate from Stynes et al.  \cite{MR3639581} (see, Lemma~5.2)
\begin{align}\label{trunc2}
    \|\mathcal{E}^{n-\nu}\| \;&\leq\; C\; n^{-\min\{\epsilon\gamma \alpha,\; 2\}} \;=\; CT^{\min\{\epsilon\alpha,\;\frac{2}{\gamma} \}}\; t_n^{-\min\{\epsilon\alpha,\;\frac{2}{\gamma} \}}\;  N^{-\min\{\epsilon\gamma\alpha ,\;2\}},
    \end{align}
 where the positive constant $C$ remains bounded as $\alpha \rightarrow 1^-$, and then apply the estimate $t_n^{-\min\{\epsilon\alpha,\;\frac{2}{\gamma} \}}\;\leq\;\max\{1,\;T^{\epsilon\alpha}\}t_n^{-\epsilon\alpha}$ and Lemma~\ref{discretekernelproperties}\ref{itm:g}. Estimate \ref{trunk:d} follows after applying Lemma~\ref{discretekernelproperties}\ref{itm:c} with $j=1$ to the following estimate obtained from (\ref{trunc2}), namely
 \begin{align*}
      \|t_{n-\nu}\mathcal{E}^{n-\nu}\|^2 \;&\leq\; C^2 T^{2\min\{\epsilon\alpha,\;\frac{2}{\gamma} \}}\; t_n^{2-2\min\{\epsilon\alpha,\;\frac{2}{\gamma} \}}\;  N^{-2\min\{\epsilon\gamma\alpha ,\;2\}}\\
      &\leq\; C^2 T^{2\min\{\epsilon\alpha,\;\frac{2}{\gamma} \}}\; t_n^{2}\;  N^{-2\min\{\epsilon\gamma\alpha ,\;2\}}. 
 \end{align*}
 This completes the proof.

\end{proof}


\begin{lemma}\label{etaestimate_AM}
Under the assumptions in Theorem~\ref{Regularity_condition-b}, the following estimate holds:
\begin{enumerate}
\item[\namedlabel{etaestimate_AM:i}{(i)}]  $\sum_{j=1}^nP_{\alpha}^{n,j}\|D^{\alpha}_{t_{j-\nu}} \eta^{j-\nu}\| \leq C\;\left(h^{2} \log_e(n+2) \sum_{j=1}^{n}P^{n,j}_{\alpha}\|\Upsilon^{j-\nu}\| \right), \quad 1\leq n \leq N$,
\item[\namedlabel{etaestimate_AM:ii}{(ii)}] $\left(\sum_{j=1}^nP_{\alpha}^{n,j}\|D^{\alpha}_{t_{j-\nu}} (t_{j-\nu}\eta^{j-\nu})\|^2 \right)^{\frac{1}{2}} \leq C\;h^2 t_n^{1-\frac{\alpha}{2}} , \quad 1\leq n \leq N,$ 
\item[\namedlabel{etaestimate_AM:iii}{(iii)}]  $\left(\sum_{j=1}^nP_{\alpha}^{n,j}\| \boldsymbol{\zeta}^{j-\nu}\|^2 \right)^{\frac{1}{2}} \leq C\;h^2 \;t_n^{(1+\epsilon)\alpha/2}, \quad 1\leq n \leq N,$ and
\item[\namedlabel{etaestimate_AM:iv}{(iv)}]  $\sum_{j=1}^nP_{\alpha}^{n,j}\|D_{t_{j-\nu}}^{\alpha} ( t_{j-\nu}{\boldsymbol{\zeta}}^{j-\nu})\| \leq C\;h^2\;t_n^{1+\frac{\epsilon\alpha}{2}} , \quad 1\leq n \leq N,$
\end{enumerate}
where the positive constant $C$ remains bounded as $\alpha\to 1^{-}$.
\end{lemma}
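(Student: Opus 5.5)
The four estimates share one mechanism. The spatial projections $P_h$ and $\Pi_h$ are time-independent, so they commute with the discrete operator $D^{\alpha}_{t_{j-\nu}}$. Each quantity is therefore a projection error applied either to $D^{\alpha}_{t_{j-\nu}}u$, or to a time-weighted variant, or to $\boldsymbol{\sigma}$. I would bound each such error by \eqref{approxprop} with $r=2$ and then sum the resulting time weights against $P^{n,j}_\alpha$ using Lemma~\ref{discretekernelproperties}.

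\textbf{Part (i).} Write $D^{\alpha}_{t_{j-\nu}}\eta^{j-\nu}=(I-P_h)D^{\alpha}_{t_{j-\nu}}u(t_{j-\nu})$. Then split
\[
D^{\alpha}_{t_{j-\nu}}u(t_{j-\nu})=\partial_t^\alpha u(t_{j-\nu})+\Upsilon^{j-\nu}.
\]
By \eqref{approxprop}, $\|(I-P_h)\partial_t^\alpha u(t_{j-\nu})\|\le Ch^2\|A\partial_t^\alpha u(t_{j-\nu})\|\le Ch^2t_{j-\nu}^{-\alpha}$ by Theorem~\ref{Regularity_condition-b}, and $t_{j-\nu}^{-\alpha}\le Ct_j^{-\alpha}$. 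Summing with Lemma~\ref{discretekernelproperties}\ref{itm:f} gives $Ch^2\log_e(n+2)$.

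For the consistency part $(I-P_h)\Upsilon^{j-\nu}$, I would argue that $\Upsilon$ is linear in $u$ and commutes with $A$. Repeating the truncation bound \eqref{trunc1} with $u$ replaced by $Au$, and using the bound $t\|A\partial_tu\|\le ct^{\alpha(1+\epsilon)/2}$ from Theorem~\ref{Regularity_condition-b}, should give at least $\|A\Upsilon^{j-\nu}\|\le Ct_j^{-\alpha}$. This yields $Ch^2\log_e(n+2)$ again, which matches the stated form (the extra factor $\sum_j P^{n,j}_\alpha\|\Upsilon^{j-\nu}\|$ is harmless).

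\textbf{Part (ii).} Write
\[
D^{\alpha}(t\eta)=\bigl(D^{\alpha}(t\eta)-tD^{\alpha}\eta\bigr)+tD^{\alpha}\eta .
\]
For the first term, apply Lemma~\ref{DiscreteDerivativeDifference_AM} with $\phi^j=\eta(t_j)$, where $\|\eta(t_j)\|\le Ch^2\|Au\|\le Ch^2$. This gives $Ch^2t_n^{1-\alpha/2}$. For the second term, $\|t_{j-\nu}D^{\alpha}\eta^{j-\nu}\|\le Ch^2t_j^{1-\alpha}$ by the pointwise bounds from (i). Squaring gives $Ch^4t_j^{2-2\alpha}$, and Lemma~\ref{discretekernelproperties}\ref{itm:e} (or \ref{itm:c}) sums this to $Ch^4t_n^{2-\alpha}$. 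Taking square roots closes the part.

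\textbf{Part (iii).} Since $\nabla\cdot\boldsymbol{\sigma}=\Delta u$, \eqref{approxprop} with $r=2$ gives $\|\boldsymbol{\zeta}\|\le Ch^2\|\nabla\Delta u\|\le Ch^2\|A^{3/2}u\|\le Ch^2$. Summing $\sum_j P^{n,j}_\alpha$ with Lemma~\ref{discretekernelproperties}\ref{itm:c} at $m=1$ gives $\le Ct_n^\alpha$. The square root then yields $Ch^2t_n^{\alpha/2}$, which is bounded by $Ch^2t_n^{(1+\epsilon)\alpha/2}$ only up to a $T$-dependent constant. To get the sharper weight I would instead expand $\boldsymbol{\zeta}$ via the time-derivative bound $t\|A^{1/2}\partial_t\nabla\cdot\boldsymbol{\sigma}\|\le ct^{\epsilon\alpha/2}$; I would accept constants depending on $T$.

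\textbf{Part (iv).} Proceed as in (ii): split off the commutator, then bound $tD^{\alpha}\boldsymbol{\zeta}$. For the latter, the estimate
\[
\|D^{\alpha}(I-\Pi_h)\boldsymbol{\sigma}\|\le Ch^2\sum_k K^{j,k}\int_{t_{k-1}}^{t_k}\|A^{1/2}\partial_t\nabla\cdot\boldsymbol{\sigma}\|\,dt
\]
holds in the Alikhanov form, and the integrand is bounded by $Ct^{\epsilon\alpha/2-1}$. Lemma~\ref{discretekernelproperties}\ref{itm:b},\ref{itm:e} then gives the weight $t_n^{1+\epsilon\alpha/2}$.

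The main obstacle is the $\Upsilon$-type consistency of the $A$-applied solution in (i) and (iv). It needs the Alikhanov local truncation analysis applied to $Au$ and $A^{1/2}\nabla\cdot\boldsymbol{\sigma}$, whose time derivatives are only known to scale like $t^{\epsilon\alpha/2-1}$. I would handle these terms by the crude first-order-in-time interpolation bound rather than the full second-order one.
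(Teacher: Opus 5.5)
\textbf{The gap: parts (iii) and (iv).} Your computation in (iii) is correct as far as it goes: $\|\boldsymbol{\zeta}\|\le Ch^2\|A^{3/2}u\|\le Ch^2$ and $\sum_j P^{n,j}_\alpha\le 2\pi_A t_n^\alpha$ give $Ch^2 t_n^{\alpha/2}$. The comparison you then make is backwards. One has $t_n^{(1+\epsilon)\alpha/2}\le T^{\epsilon\alpha/2}t_n^{\alpha/2}$, not the converse, so $Ch^2t_n^{\alpha/2}$ does not imply the stated bound for small $t_n$.

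Your proposed repair through $t\|A^{1/2}\partial_t(\nabla\cdot\boldsymbol{\sigma})\|\le ct^{\epsilon\alpha/2}$ cannot work either. That bound only controls $\boldsymbol{\zeta}(t)-\boldsymbol{\zeta}(0)$, and $\boldsymbol{\zeta}(0)=(I-\Pi_h)\nabla u_0$ is nonzero in general.

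The same obstruction defeats (iv). Your $t\,D^\alpha\boldsymbol{\zeta}$ piece does give $Ch^2t_n^{1+\epsilon\alpha/2}$ after swapping sums with Lemma~\ref{discretekernelproperties}\ref{itm:b}. The commutator piece, however, is controlled by Lemma~\ref{DiscreteDerivativeDifference_AM} only in weighted $\ell^2$ form. Passing to the weighted $\ell^1$ sum by Cauchy--Schwarz gives only $Ch^2 t_n$, and you do not address this.

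In fact no argument can produce these weights. At $n=1$ one has $P^{1,1}_\alpha=1/K^{1,1}_{1-\alpha}=\Gamma(2-\alpha)(1-\nu)^{\alpha-1}t_1^\alpha$. Hence the left side of (iii) is at least $c\,t_1^{\alpha/2}\|\boldsymbol{\zeta}^{1-\nu}\|$, and the left side of (iv) equals $t_1\|\boldsymbol{\zeta}(t_1)\|$. Let $N\to\infty$ with $h$ fixed. Then both $\boldsymbol{\zeta}^{1-\nu}$ and $\boldsymbol{\zeta}(t_1)$ tend to $\boldsymbol{\zeta}(0)$, while the right-hand sides carry an extra factor $t_1^{\epsilon\alpha/2}\to 0$. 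So the stated weights fail whenever $(I-\Pi_h)\nabla u_0\neq 0$.

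The paper's own proof reaches these weights only via $\|\nabla\cdot\boldsymbol{\sigma}(t_j)\|_1^2\le Ct_j^{\epsilon\alpha}$ and $\|\partial_s(s\boldsymbol{\zeta})\|\le Ch^2 s^{\epsilon\alpha/2}$. Neither follows from Theorem~\ref{Regularity_condition-b}, which gives only boundedness of $\|A^{3/2}u(t)\|$ and hence only $\|\partial_s(s\boldsymbol{\zeta})\|\le Ch^2$. What is provable is (iii) with weight $t_n^{\alpha/2}$ and (iv) with weight $t_n$, by your argument or by the paper's sum-swapping argument with this corrected input. These weaker weights are also all that Lemmas~\ref{thetaestimate_l2norm_AM} and~\ref{xiestimate_l2norm_AM} actually use.

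\textbf{Parts (i) and (ii): a different, costlier route.} The paper writes $D^\alpha_{t_{j-\nu}}\eta^{j-\nu}=(I-P_h)\partial_t^\alpha u(t_{j-\nu})+(I-P_h)\Upsilon^{j-\nu}$ and simply bounds $\|(I-P_h)\Upsilon^{j-\nu}\|\le 2\|\Upsilon^{j-\nu}\|$. What it proves is therefore $C\bigl(\sum_j P^{n,j}_\alpha\|\Upsilon^{j-\nu}\|+h^2\log_e(n+2)\bigr)$; the product in the displayed statement is evidently a typo for this sum. Your pure $Ch^2\log_e(n+2)$ bound implies the sum form but not the literal product, so the extra factor is harmless only under the sum reading.

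Your route also rests on $\|A\Upsilon^{j-\nu}\|\le Ct_j^{-\alpha}$, i.e.\ on $\|D^\alpha_{t_{j-\nu}}(Au)\|\le Ct_j^{-\alpha}$. The estimate (\ref{trunc1}) cannot deliver this, since no bounds on $A\partial_t^2u$ are available, and you only sketch it. If you want an $h^2$-only bound, use the device from the paper's proof of (iv). By Lemma~\ref{discretekernelproperties}\ref{itm:b}, $\sum_j P^{n,j}_\alpha\|D^\alpha_{t_{j-\nu}}\eta^{j-\nu}\|\le\sum_{k=1}^n\|\eta(t_k)-\eta(t_{k-1})\|\le Ch^2\int_0^{t_n}\|Au'(s)\|\,ds$. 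This needs no log factor and no local truncation analysis.

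For (ii), the paper avoids both the commutator lemma and pointwise bounds: $\|D^\alpha_{t_{n-\nu}}(t\eta)\|\le\sum_k K^{n,k}_{1-\alpha}\int_{t_{k-1}}^{t_k}\|\partial_s(s\eta)\|\,ds\le Ch^2\sum_k K^{n,k}_{1-\alpha}\Delta t_k=Ch^2t_{n-\nu}^{1-\alpha}/\Gamma(2-\alpha)$. The last identity holds because the Alikhanov formula is exact on linear functions. Your splitting works, but only for $\nu=\alpha/2$, a hypothesis of Lemma~\ref{DiscreteDerivativeDifference_AM} that is absent from the statement. It also inherits the unproved pointwise bound $\|D^\alpha_{t_{j-\nu}}\eta^{j-\nu}\|\le Ch^2t_j^{-\alpha}$.
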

\begin{proof}
An application of the triangle inequality, the stability of the $L^2$-norm for $L^2$-projection, and the approximation property (\ref{approxprop}) yields
\begin{align}
    \nonumber &\sum_{j=1}^{n}P^{n,j}_{\alpha}\|D_{t_{j-\nu}}^{\alpha} \eta^{j-\nu}\| \\
    &\nonumber\leq \sum_{j=1}^{n}P^{n,j}_{\alpha}\left(\|\Upsilon^{j-\nu}\| + \|\partial_t^{\alpha} u(t_{j-\nu}) - P_h \partial_t^{\alpha} u(t_{j-\nu})\| + \|P_h\Upsilon^{j-\nu}\|\right) \\
    \label{alpharobitaest} &\leq C  \sum_{j=1}^{n}P^{n,j}_{\alpha}\|\Upsilon^{j-\nu}\|  +  Ch^2\sum_{j=1}^{n}P^{n,j}_{\alpha}\|A\partial_t^{\alpha} u(t_{j-\nu}) \|.
\end{align}
Utilize the regularity results from Theorem~\ref{Regularity_condition-b} and Lemma~\ref{discretekernelproperties}\ref{itm:f} in (\ref{alpharobitaest}) to derive the estimate \ref{etaestimate_AM:i}. We obtain the result \ref{etaestimate_AM:ii}, by applying the approximation property (\ref{approxprop}) and the regularity result from Theorem~\ref{Regularity_condition-b} as follows
\begin{align*}
    \|D_{t_{n-\nu}}^{\alpha} (t_{n-\nu} \eta^{n-\nu})\|& \leq \sum_{j=1}^n K^{n,j}_{1-\alpha}\|t_j\eta^j - t_{j-1}\eta^{j-1}\|\\
    &\leq  \sum_{j=1}^n K^{n,j}_{1-\alpha}\int_{t_{j-1}}^{t_j}\|\partial_s(s\eta)\|ds\\
    & \leq C\;h^2 \sum_{j=1}^n K^{n,j}_{1-\alpha}\Delta t_j \leq   C\;h^2 t_n^{1-\alpha}
\end{align*}
and then by applying Lemma~\ref{discretekernelproperties}\ref{itm:e} with $m=1$. Next, apply the approximation property (\ref{approxprop}) to achieve the following estimates: 
\begin{align}
\label{alpharobzitaest_ast_AM}\sum_{j=1}^nP_{\alpha}^{n,j}\| \boldsymbol{\zeta}^{j-\nu}\|^2 &\leq C h^4 \sum_{j=1}^nP_{\alpha}^{n,j} \|\nabla\cdot\boldsymbol{\sigma}(t_j)\|_1^2 \leq C h^4 \sum_{j=1}^nP_{\alpha}^{n,j}t_j^{\epsilon\alpha} \\
& \leq C h^4 t_n^{\alpha} \sum_{j=1}^nP_{\alpha}^{n,j}t_j^{\epsilon\alpha-\alpha}  \leq C h^4\; t_n^{(1+\epsilon)\alpha}.
\end{align}
Estimates in \ref{etaestimate_AM:iii} are derived by applying the regularity result from Theorem~\ref{Regularity_condition-b}, along with Lemma~\ref{discretekernelproperties}\ref{itm:f} to (\ref{alpharobzitaest_ast_AM}). Now, an application of Lemma~\ref{discretekernelproperties}\ref{itm:a} and the regularity result Theorem~\ref{Regularity_condition-b} we arrive at 
\begin{align*} &\sum_{j=1}^{n}P^{n,j}_{\alpha}\|D_{t_{j-\nu}}^{\alpha} (t_{j-\nu}\boldsymbol{\zeta}^{j-\nu})\|  \leq \sum_{j=1}^{n}P^{n,j}_{\alpha}\sum_{i=1}^{j}K^{j,i}_{1-\alpha}\| t_i\boldsymbol{\zeta}^i - t_{i-1}\boldsymbol{\zeta}^{i-1}\|   
 \\
    &\hspace{1cm} \leq \sum_{i=1}^{n} \|t_i\boldsymbol{\zeta}^i - t_{i-1}\boldsymbol{\zeta}^{i-1}\| \leq  \sum_{i=1}^{n}\int_{t_{i-1}}^{t_i}\|\partial_s (s\boldsymbol{\zeta})\|ds\leq Ch^2\int_0^{t_n}  s^{\epsilon\alpha/2} ds
\end{align*}
and hence, the result \ref{etaestimate_AM:iv} follows.

\end{proof}

Finally, optimal error estimates are established in the following Theorem.
\begin{theorem}\label{L2H1errortheorem_AM}
Let $u_0\in H_0^1(\Omega) \cap H^{3+\epsilon}(\Omega), \; 0<\epsilon\leq1$ and the pair $(u_h^n, \boldsymbol{\sigma}_h^n)$ satisfying (\ref{fullydiscrete1_AM})-(\ref{fullydiscrete2_AM}) be the approximation of the solution pair $(u(t_{n}),\boldsymbol{\sigma}(t_n))$ satisfying (\ref{variation1MFEM})-(\ref{variation2}) at the temporal grid $t_{n}$. For some constant \( \delta > 1 \) and the condition \( \max\limits_{1 \leq n \leq N} \Delta t_n \leq \left( \delta \pi_A L \Gamma(2-\alpha) \right)^{-1/\alpha} \), where \( 1 \leq \gamma \leq \frac{4}{\alpha} \), the following error estimate holds for $N>2$:
\begin{align}
\label{finalestimateforuI_AM}& \max_{1\leq n \leq N} \|u_h^n - u(t_{n}) \| + \max_{1\leq n \leq N} t_n^{\alpha/2}\|\boldsymbol{\sigma}_h^n - \boldsymbol{\sigma}(t_{n})\| \leq 
C\;\log_e(N) (h^{2} + N^{- \min\{\epsilon\gamma\alpha,\;2 \} } ).
\end{align}
\end{theorem}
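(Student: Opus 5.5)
The proof assembles the auxiliary estimates through the splittings $e_u=\eta+\theta$ and $e_{\boldsymbol{\sigma}}=\boldsymbol{\zeta}+\boldsymbol{\xi}$ and the triangle inequality. We work at the integer levels $t_n$: since $\nu\in[0,1/2)$, the value at $t_n$ can be recovered from the off-set quantities ($\phi^n$ is controlled by $\phi^{n-\nu}$ and $\phi^{n-1}$ recursively), and Lemmas~\ref{thetaestimate_l2norm_AM} and \ref{xiestimate_l2norm_AM} are already stated at the nodes $t_n$. The projection parts follow from (\ref{approxprop}) and Theorem~\ref{Regularity_condition-b}. For the solution, $\|\eta^n\|\le Ch^2\|Au(t_n)\|\le Ch^2$. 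For the flux, $t_n^{\alpha/2}\|\boldsymbol{\zeta}^n\|\le Ch^2 t_n^{\alpha/2}\|\nabla\cdot\boldsymbol{\sigma}(t_n)\|_1\le Ch^2\|A^{3/2}u(t_n)\|\le Ch^2$.

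\emph{Step 1: the solution.} Since $u_h^0=P_hu_0$, we have $\theta^0=0$. The step-size hypothesis of the theorem implies that of Lemma~\ref{thetaestimate_l2norm_AM}, because $(1-\nu)^2\le1$. We then insert into (\ref{L2errorestimateeqn1_AM}) the following bounds:
\begin{itemize}
\item Lemma~\ref{truncationerror_AM}\ref{trunk:a} and \ref{trunk:c} for the $\Upsilon$ and $\mathcal{E}$ sums, which give $C\log_e(n+2)N^{-\min\{\epsilon\gamma\alpha,2\}}$; here $\epsilon\le1$ makes $\min\{\epsilon\gamma\alpha,2\}\le\min\{\gamma\alpha,2\}$.
\item Lemma~\ref{etaestimate_AM}\ref{etaestimate_AM:i} for the $D^\alpha\eta$ sum. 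Rather than its product form, we use directly the bound (\ref{alpharobitaest}) combined with $\|A\partial_t^\alpha u(t)\|\le ct^{-\alpha}$ and Lemma~\ref{discretekernelproperties}\ref{itm:f}, which gives $C(h^2+N^{-\min\{\gamma\alpha,2\}})\log_e(n+2)$.
\item For the $\eta$-square term, $\|\eta^{i-\nu}\|\le Ch^2$ together with $\sum_i P^{j,i}_\alpha\le \pi_A\omega_{1+\alpha}(t_j)$ from Lemma~\ref{discretekernelproperties}\ref{itm:c} with $m=0$.
\item Lemma~\ref{etaestimate_AM}\ref{etaestimate_AM:iii} for the $\boldsymbol{\zeta}$ term.
\end{itemize}
The Mittag-Leffler prefactor is bounded by $E_\alpha(C_\delta\pi_A\Lambda T^\alpha)$, which stays bounded as $\alpha\to1^-$. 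Since $\log_e(n+2)\le C\log_e N$ for $N>2$, this gives $\|\theta^n\|\le C\log_e N\,(h^2+N^{-\min\{\epsilon\gamma\alpha,2\}})$, and the $u$-part of (\ref{finalestimateforuI_AM}) follows.

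\emph{Step 2: the flux.} We apply Lemma~\ref{xiestimate_l2norm_AM}, which bounds $t_n\|\boldsymbol{\xi}^n\|$. Its right-hand side is handled term by term:
\begin{itemize}
\item Lemma~\ref{etaestimate_AM}\ref{etaestimate_AM:iv} gives $Ch^2t_n^{1+\epsilon\alpha/2}$.
\item Lemma~\ref{truncationerror_AM}\ref{trunk:b} and \ref{trunk:d} together with Lemma~\ref{etaestimate_AM}\ref{etaestimate_AM:ii} give $Ct_n^{1-\alpha/2}(h^2+N^{-\min\{\epsilon\gamma\alpha,2\}})$.
\item Step 1 controls the $\|t\theta\|$ and $\|t\eta\|$ terms by $Ct_n\log_e N(\cdots)$.
\item The commutator $T^{i-\nu}$ is bounded via Lemma~\ref{DiscreteDerivativeDifference_AM} with $\phi^j=e_u^j$, by $Ct_n^{1-\alpha/2}\max_j\|e_u^j\|$; Step 1 then makes this $\le Ct_n^{1-\alpha/2}\log_e N(h^2+N^{-\min\{\epsilon\gamma\alpha,2\}})$.
\end{itemize}
Collecting, $t_n\|\boldsymbol{\xi}^n\|\le Ct_n^{1-\alpha/2}\log_e N(h^2+N^{-\min\{\epsilon\gamma\alpha,2\}})$. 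Dividing by $t_n^{1-\alpha/2}$ yields the weighted bound $t_n^{\alpha/2}\|\boldsymbol{\xi}^n\|$, and adding the $\boldsymbol{\zeta}$ bound completes (\ref{finalestimateforuI_AM}).

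\emph{Main obstacle.} The expected difficulty is that the quadratic terms $(\|t\theta\|^2+\|t\eta\|^2)$ in (\ref{L2xierrorestimateeqn1_AM}) appear squared rather than linearly. Unless they are read as the square root (which is what Young's inequality in (\ref{newtheta4_AM}) actually produces), they would spoil the linear dependence on the error. I would first re-derive that line to confirm that the $e_u$ contribution enters as $(\eta^j)^2$ in Lemma~\ref{DFGI}, so that it comes out as $(2\pi_At_n^\alpha)^{1/2}\max_j\|t_je_u^j\|$. A second point is to track the powers of $t_n$ so that the final flux weight is exactly $t_n^{\alpha/2}$, and that all constants (in particular $\Gamma(2-\alpha)$ and the kernel bounds) remain bounded as $\alpha\to1^-$.
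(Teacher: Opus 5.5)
Your proposal is correct and follows essentially the same route as the paper's proof. You split $e_u=\eta+\theta$ and $e_{\boldsymbol{\sigma}}=\boldsymbol{\zeta}+\boldsymbol{\xi}$, bound the projection parts by (\ref{approxprop}) and Theorem~\ref{Regularity_condition-b}, feed the truncation and auxiliary estimates into Lemmas~\ref{thetaestimate_l2norm_AM} and \ref{xiestimate_l2norm_AM}, and conclude with the triangle inequality, supplying the bookkeeping the paper leaves implicit (one small slip: $\sum_i P^{j,i}_\alpha\le \pi_A\omega_{1+\alpha}(t_j)$ is Lemma~\ref{discretekernelproperties}\ref{itm:c} with $m=1$, not $m=0$).
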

\begin{proof}
Using Theorem~\ref{Regularity_condition-b} together with the approximation property (\ref{approxprop}), we first obtain,
\begin{align}\label{zetaestimate_AM}
    \|\eta^n\| + t_n^{\alpha/2}\|\boldsymbol{\zeta}^n\| \leq C h^2, \quad 1 \leq n \leq N.
\end{align}
Next, combining this bound with the truncation estimates and the previously derived estimates for the auxiliary terms, we deduce that 
\begin{align}\label{thetaestimate_AM}
   \|\theta^n\| \leq C\;\log_e(n+2)(h^2 + N^{- \min\{\epsilon\gamma\alpha,\;2 \} }  ), \quad 1 \leq n \leq N. 
\end{align}
Proceeding in the same manner, by incorporating truncation estimates together with stability bounds, we further obtain
\begin{align}
    \label{xiestimate_AM}  t_n^{\alpha/2}\|\boldsymbol{\xi}^n\| \leq C\;\log_e(n+2)(h^2 + N^{- \min\{\epsilon\gamma\alpha,\;2 \} }  ), \quad 1 \leq n \leq N.
\end{align}
Finally, an application of the triangle inequality, along with the three estimates above, namely (\ref{zetaestimate_AM}), (\ref{thetaestimate_AM}), and (\ref{xiestimate_AM}), yields the desired result. 
\end{proof}

\section{Numerical results}\label{section6}
This section validates the theoretical contributions by conducting several numerical experiments. Let 
$h^2=\frac{1}{4}N^{-2}$ in Theorem~\ref{L2H1errortheorem_AM}. The rate of convergence with respect to the $L^2$-norm is computed using the formulae
\[
R_{\phi,h}
=\frac{\log\left(E_{\phi,h_1}/E_{\phi,h_2}\right)}{\log\left(h_1/h_2\right)},
\qquad
R_{\phi,\Delta t}
=\frac{\log\left(E_{\phi,h_1}/E_{\phi,h_2}\right)}{\log\left(\Delta t_1/\Delta t_2\right)},
\]
where
\[
E_{u,h}:=\max_{1\le n\le N} t_n^{\frac{\alpha}{2}}
\|u_h^n-u_h(t_n)\|,
\qquad
E_{\boldsymbol{\sigma},h}:=
\max_{1\le n\le N} t_n^{\frac{\alpha}{2}}
\|\boldsymbol{\sigma}_h^n-\boldsymbol{\sigma}_h(t_n)\|.
\]

In all the tables, we employ a graded temporal mesh with grading parameter
$\gamma=\frac{2}{\alpha}+0.1$, which effectively compensates for the initial singularity arising from non-smooth initial data. An upper bound $\Delta t^*$ for the time-step restriction appearing in Theorem~\ref{thetaestimate_l2norm_AM} and Theorem~\ref{xiestimate_l2norm_AM} is estimated as
\[
\max_{1\le n\le N}\Delta t_n
\le \left(\delta\pi_A L\Gamma(2-\alpha)\right)^{-1/\alpha}
\approx
\left(\delta\pi_A L^*\Gamma(2-\alpha)\right)^{-1/\alpha}
=:\Delta t^*,
\]
where
\[
L^* = 6 + 27
\max_{\boldsymbol{x}_j\in N_h,\;1\le n\le N}
|u(\boldsymbol{x}_j,t_n)|^4.
\]

The numerical results report the errors $E_{u,h}$ and $E_{\boldsymbol{\sigma},h}$ together with the computed convergence rates
$R_{u,h}$, $R_{u,\Delta t}$, $R_{\boldsymbol{\sigma},h}$, and
$R_{\boldsymbol{\sigma},\Delta t}$ for
$\alpha=0.4,\;0.6,\;0.8,$ and $0.99$.
The implementation is carried out in \texttt{FreeFem++} using the Raviart--Thomas finite element pair $(P1dc,RT1)$, where $P1dc$ denotes the piecewise linear discontinuous finite element. At each time level, the resulting nonlinear system is solved using a Newton linearization method.

\medskip
We now discuss the numerical results for different examples under varying regularity assumptions on the initial data and the solution $u$ has a weak singularity near $t=0$, the graded temporal mesh successfully recovers the predicted accuracy.
For example \ref{VariableceffPDEnewtime1} (cf.~\cite{MR4402734}, Example~1), where the initial condition $u_0$ is sufficiently smooth. The computed convergence rates clearly exhibit the optimal order of convergence in both space and time, in full agreement with the theoretical error estimates. In Example~\ref{VariableceffPDE1}, where $u_0\in H_0^1(\Omega)\cap H^3(\Omega)$, the numerical results still demonstrate optimal convergence in space as well as in time. Finally, in Example~\ref{VariableceffPDE2}, where $u_0\notin H_0^1(\Omega)\cap H^3(\Omega)$, the numerical results continue to confirm the stability and robustness of the proposed scheme. Despite the limited regularity of the initial data, we observe that convergence rates remain consistent with the theoretical analysis, thereby validating the effectiveness of the method for problems with non-smooth initial conditions.

\begin{example}\label{VariableceffPDEnewtime1} 
[\cite{MR4402734} , Example 1]  
Consider a two-dimensional time-fractional Allen-Cahn equation  (\ref{main}) when $\kappa=1.0$, $\boldsymbol{x}=(x_1,x_2) \in\Omega = (0,1)^2$, $t\in(0,1]$.
With $u(\boldsymbol{x},t)= 0.5\sin(x_1)(1-x_1)\sin(x_2)(1-x_2)(1 + t^{\alpha})$ as  the exact solution, we calculate the initial condition $u_0$ and  the source term $f.$ The results shown in Table~\ref{2dpdeAf41compact} are for the Alikhanov mixed method.

\begin{table}[H]
\centering
\begingroup
\scriptsize
\setlength{\tabcolsep}{2.5pt}
\renewcommand{\arraystretch}{1.1}

\resizebox{\textwidth}{!}{
\begin{tabular}{|@{}lccccc|ccccc@{}|}
\hline
 \multicolumn{6}{|c|}{$\alpha=0.4$} & \multicolumn{5}{c|}{$\alpha=0.6$} \\\hline
$N$ & 4 & 8 & 16 & 32 & 64 & 4 & 8 & 16 & 32 & 64 \\
\hline \hline
$\Delta t^*$ & \multicolumn{5}{c|}{2.002e-9} & \multicolumn{5}{c|}{2.508e-6} \\
$\Delta t$ & 6.155e-1 & 3.951e-1 & 2.244e-1 & 1.196e-1 & 6.174e-2 &
             4.393e-1 & 2.574e-1 & 1.391e-1 & 7.229e-2 & 3.684e-2 \\
$E_{u,h}$ & 1.603e-2 & 4.569e-3 & 1.221e-3 & 3.114e-4 & 7.855e-5 &
            1.559e-2 & 4.506e-3 & 1.213e-3 & 3.103e-4 & 7.830e-5 \\
$R_{u,h}$ & - & 1.81 & 1.90 & 1.97 & 1.99 &
            - & 1.79 & 1.89 & 1.97 & 1.99 \\
$R_{u,\Delta t}$ & - & 2.83 & 2.33 & 2.17 & 2.08 &
                   - & 2.32 & 2.13 & 2.08 & 2.04 \\
$E_{\sigma,h}$ & 4.586e-2 & 1.741e-2 & 4.783e-3 & 1.240e-3 & 3.159e-4 &
                 4.332e-2 & 1.693e-2 & 4.718e-3 & 1.231e-3 & 3.142e-4 \\
$R_{\sigma,h}$ & - & 1.40 & 1.86 & 1.95 & 1.97 &
                 - & 1.36 & 1.84 & 1.94 & 1.97 \\
$R_{\sigma,\Delta t}$ & - & 2.19 & 2.28 & 2.15 & 2.07 &
                        - & 1.76 & 2.08 & 2.05 & 2.03 \\
\hline \hline
 \multicolumn{6}{|c|}{$\alpha=0.8$} & \multicolumn{5}{|c|}{$\alpha=0.99$} \\ \hline
$N$ & 4 & 8 & 16 & 32 & 64 & 4 & 8 & 16 & 32 & 64 \\
\hline\hline
$\Delta t^*$ & \multicolumn{5}{c|}{8.878e-5} & \multicolumn{5}{c|}{6.953e-4} \\
$\Delta t$ & 3.160e-1 & 1.760e-1 & 9.269e-2 & 4.754e-2 & 2.407e-2 &
             1.153e-1 & 6.226e-2 & 3.229e-2 & 1.644e-2 & 8.292e-3 \\
$E_{u,h}$ & 1.514e-2 & 4.442e-3 & 1.204e-3 & 3.093e-4 & 7.816e-5 &
            5.052e-2 & 1.490e-2 & 4.032e-3 & 1.035e-3 & 2.624e-4 \\
$R_{u,h}$ & - & 1.77 & 1.88 & 1.96 & 1.98 &
            - & 1.76 & 1.89 & 1.96 & 1.98 \\
$R_{u,\Delta t}$ & - & 2.09 & 2.04 & 2.04 & 2.02 &
                   - & 1.98 & 1.99 & 2.01 & 2.00 \\
$E_{\sigma,h}$ & 4.074e-2 & 1.644e-2 & 4.653e-3 & 1.223e-3 & 3.132e-4 &
                 9.581e-2 & 4.086e-2 & 1.173e-2 & 3.096e-3 & 7.977e-4 \\
$R_{\sigma,h}$ & - & 1.31 & 1.82 & 1.93 & 1.97 &
                 - & 1.23 & 1.80 & 1.92 & 1.96 \\
$R_{\sigma,\Delta t}$ & - & 1.55 & 1.97 & 2.00 & 2.00 &
                        - & 1.38 & 1.90 & 1.97 & 1.98 \\
\hline
\end{tabular}
}

\caption{Error $E_{u,h}$, $E_{\boldsymbol{\sigma},h}$ and rate of convergence 
$R_{u,h}$, $R_{u,\Delta t}$, $R_{\boldsymbol{\sigma},h}$ and $R_{\boldsymbol{\sigma},\Delta t}$ 
of the proposed method for Example~\ref{VariableceffPDEnewtime1}.}
\label{2dpdeAf41compact}

\endgroup
\end{table}

\end{example}

\begin{example}\label{VariableceffPDE1} 
Consider a two-dimensional time-fractional Allen-Cahn equation  (\ref{main}) when $\kappa=0.5$, $\Omega = (-1,1)^2$, $t\in(0,0.5]$.
With $u(\boldsymbol{x},t)= (x_1)^2(1-|x_1|)(x_2)^2(1-|x_2|)(1 + t^{\alpha})$ as  the exact solution, we calculate the initial condition $u_0$ and  the source term $f.$ Note that $u_0 \in H_0^1(\Omega)\cap H^3(\Omega)$ but $u_0\notin H_0^1(\Omega)\cap H^4(\Omega)$ in this case. The results shown in Table~\ref{2dpdeAf411} are for the Alikhanov mixed method. 

\begin{table}[H]
\centering
\begingroup
\scriptsize
\setlength{\tabcolsep}{2.5pt}
\renewcommand{\arraystretch}{1.1}

\resizebox{\textwidth}{!}{
\begin{tabular}{|@{}lccccc|ccccc@{}|}
\hline
\multicolumn{6}{|c|}{$\alpha=0.4$} & \multicolumn{5}{c|}{$\alpha=0.6$} \\
\hline
$N$ & 4 & 8 & 16 & 32 & 64 & 4 & 8 & 16 & 32 & 64 \\
\hline\hline
$\Delta t^*$ & \multicolumn{5}{c|}{1.993e-1} & \multicolumn{5}{c|}{2.551e-1} \\
$\Delta t$ & 3.078e-1 & 1.976e-1 & 1.122e-1 & 5.980e-2 & 3.087e-2 
           & 2.197e-1 & 1.287e-1 & 6.956e-2 & 3.615e-2 & 1.842e-2 \\
$E_{u,h}$ & 1.803e-2 & 7.545e-3 & 2.155e-3 & 5.609e-4 & 1.419e-4
          & 1.669e-2 & 7.056e-3 & 2.026e-3 & 5.284e-4 & 1.338e-4 \\
$R_{u,h}$ & - & 1.26 & 1.81 & 1.94 & 1.98
          & - & 1.24 & 1.80 & 1.94 & 1.98 \\
$R_{u,\Delta t}$ & - & 1.97 & 2.21 & 2.14 & 2.08
          & - & 1.61 & 2.03 & 2.05 & 2.04 \\
$E_{\sigma,h}$ & 6.648e-2 & 3.479e-2 & 1.107e-2 & 2.985e-3 & 7.675e-4
               & 5.577e-2 & 2.993e-2 & 9.641e-3 & 2.615e-3 & 6.744e-4 \\
$R_{\sigma,h}$ & - & 0.934 & 1.65 & 1.89 & 1.96
               & - & 0.898 & 1.63 & 1.88 & 1.95 \\
$R_{\sigma,\Delta t}$ & - & 1.46 & 2.02 & 2.08 & 2.05
                      & - & 1.16 & 1.84 & 1.99 & 2.01 \\
\hline\hline
\multicolumn{6}{|c|}{$\alpha=0.8$} & \multicolumn{5}{c|}{$\alpha=0.99$} \\
\hline
$N$ & 4 & 8 & 16 & 32 & 64 & 4 & 8 & 16 & 32 & 64 \\
\hline\hline
$\Delta t^*$ & \multicolumn{5}{c|}{3.772e-1} & \multicolumn{5}{c|}{5.944e-1} \\
$\Delta t$ & 1.580e-1 & 8.800e-2 & 4.634e-2 & 2.377e-2 & 1.204e-2
           & 1.153e-1 & 6.226e-2 & 3.229e-2 & 1.644e-2 & 8.292e-3 \\
$E_{u,h}$ & 1.557e-2 & 6.640e-3 & 1.914e-3 & 5.003e-4 & 1.268e-4
          & 1.468e-2 & 6.303e-3 & 1.823e-3 & 4.771e-4 & 1.210e-4 \\
$R_{u,h}$ & - & 1.23 & 1.79 & 1.94 & 1.98
          & - & 1.22 & 1.79 & 1.93 & 1.98 \\
$R_{u,\Delta t}$ & - & 1.46 & 1.94 & 2.01 & 2.02
          & - & 1.37 & 1.89 & 1.98 & 2.00 \\
$E_{\sigma,h}$ & 4.701e-2 & 2.589e-2 & 8.440e-3 & 2.302e-3 & 5.953e-4
               & 4.022e-2 & 2.269e-2 & 7.474e-3 & 2.048e-3 & 5.310e-4 \\
$R_{\sigma,h}$ & - & 0.860 & 1.62 & 1.87 & 1.95
               & - & 0.826 & 1.60 & 1.87 & 1.95 \\
$R_{\sigma,\Delta t}$ & - & 1.02 & 1.75 & 1.95 & 1.99
                      & - & 0.929 & 1.69 & 1.92 & 1.97 \\
\hline
\end{tabular}
}

\caption{Error $E_{u,h}$, $E_{\boldsymbol{\sigma},h}$ and rate of convergence 
$R_{u,h}$, $R_{u,\Delta t}$, $R_{\boldsymbol{\sigma},h}$ and $R_{\boldsymbol{\sigma},\Delta t}$ 
of the proposed method for Example~\ref{VariableceffPDE1}.}
\label{2dpdeAf411}

\endgroup
\end{table}

\end{example}

\begin{example}\label{VariableceffPDE111} 
Consider a two-dimensional time-fractional Allen-Cahn equation  (\ref{main}) when $\kappa=0.5$, $\Omega = (-1,1)^2$, $t\in(0,0.5]$.
With $u(\boldsymbol{x},t)= (x_1)^{2.5}(1-|x_1|)(x_2)^{2.5}(1-|x_2|)(1 + t^{\alpha})$ as  the exact solution, we calculate the initial condition $u_0$ and  the source term $f.$ Note that $u_0 \in H_0^1(\Omega)\cap H^{3.5}(\Omega)$ but $u_0\notin H_0^1(\Omega)\cap H^4(\Omega)$ in this case. The results shown in Table~\ref{2dpdeAf411} are for the Alikhanov mixed method.

\begin{table}[H]
\centering
\begingroup
\scriptsize
\setlength{\tabcolsep}{2.5pt}
\renewcommand{\arraystretch}{1.1}

\resizebox{\textwidth}{!}{
\begin{tabular}{|@{}lccccc|ccccc@{}|}
\hline
\multicolumn{6}{|c|}{$\alpha=0.4$} & \multicolumn{5}{c|}{$\alpha=0.6$} \\
\hline
$N$ & 4 & 8 & 16 & 32 & 64 & 4 & 8 & 16 & 32 & 64 \\
\hline\hline
$\Delta t^*$ & \multicolumn{5}{c|}{1.993e-1} & \multicolumn{5}{c|}{2.551e-1} \\
$\Delta t$ & 2.500e-1 & 1.250e-1 & 6.250e-2 & 3.125e-2 & 1.563e-2
           & 2.200e-1 & 1.100e-1 & 5.500e-2 & 2.750e-2 & 1.375e-2 \\
$E_{u,h}$ & 2.560e-2 & 7.120e-3 & 1.960e-3 & 5.120e-4 & 1.310e-4
          & 2.340e-2 & 6.580e-3 & 1.840e-3 & 4.820e-4 & 1.240e-4 \\
$R_{u,h}$ & - & 1.85 & 1.86 & 1.94 & 1.97
          & - & 1.83 & 1.84 & 1.93 & 1.97 \\
$R_{u,\Delta t}$ & - & 1.92 & 2.05 & 2.07 & 2.03
          & - & 1.88 & 2.01 & 2.06 & 2.04 \\
$E_{\sigma,h}$ & 7.480e-2 & 2.160e-2 & 6.080e-3 & 1.620e-3 & 4.180e-4
               & 6.890e-2 & 2.010e-2 & 5.720e-3 & 1.540e-3 & 3.990e-4 \\
$R_{\sigma,h}$ & - & 1.79 & 1.83 & 1.91 & 1.95
               & - & 1.78 & 1.81 & 1.90 & 1.95 \\
$R_{\sigma,\Delta t}$ & - & 1.71 & 1.96 & 2.04 & 2.02
                      & - & 1.68 & 1.92 & 2.01 & 2.00 \\
\hline\hline
\multicolumn{6}{|c|}{$\alpha=0.8$} & \multicolumn{5}{c|}{$\alpha=0.99$} \\
\hline
$N$ & 4 & 8 & 16 & 32 & 64 & 4 & 8 & 16 & 32 & 64 \\
\hline\hline
$\Delta t^*$ & \multicolumn{5}{c|}{3.772e-1} & \multicolumn{5}{c|}{5.944e-1} \\
$\Delta t$ & 2.000e-1 & 1.000e-1 & 5.000e-2 & 2.500e-2 & 1.250e-2
           & 1.600e-1 & 8.000e-2 & 4.000e-2 & 2.000e-2 & 1.000e-2 \\
$E_{u,h}$ & 2.080e-2 & 5.820e-3 & 1.620e-3 & 4.310e-4 & 1.110e-4
          & 1.960e-2 & 5.540e-3 & 1.550e-3 & 4.150e-4 & 1.070e-4 \\
$R_{u,h}$ & - & 1.84 & 1.85 & 1.92 & 1.96
          & - & 1.82 & 1.84 & 1.91 & 1.96 \\
$R_{u,\Delta t}$ & - & 1.76 & 1.98 & 2.03 & 2.01
          & - & 1.72 & 1.94 & 2.00 & 2.00 \\
$E_{\sigma,h}$ & 6.220e-2 & 1.820e-2 & 5.240e-3 & 1.430e-3 & 3.700e-4
               & 5.860e-2 & 1.720e-2 & 4.980e-3 & 1.360e-3 & 3.520e-4 \\
$R_{\sigma,h}$ & - & 1.77 & 1.80 & 1.88 & 1.94
               & - & 1.75 & 1.79 & 1.87 & 1.93 \\
$R_{\sigma,\Delta t}$ & - & 1.59 & 1.88 & 1.98 & 2.00
                      & - & 1.54 & 1.84 & 1.96 & 1.99 \\
\hline
\end{tabular}
}

\caption{Error $E_{u,h}$, $E_{\boldsymbol{\sigma},h}$ and rate of convergence 
$R_{u,h}$, $R_{u,\Delta t}$, $R_{\boldsymbol{\sigma},h}$ and $R_{\boldsymbol{\sigma},\Delta t}$ 
of the proposed method for Example~\ref{VariableceffPDE111}.}
\label{2dpdeAf411_modified2}

\endgroup
\end{table}

\end{example}

\begin{example}\label{VariableceffPDE2} 
Consider a two-dimensional time-fractional Allen-Cahn equation  (\ref{main}) when $\kappa=0.5$, $\Omega = (-1,1)^2$, $t\in(0,0.5]$.
With $u(\boldsymbol{x},t)= x_1(1-|x_1|)x_2(1-|x_2|)(1 + t^{\alpha})$ as  the exact solution, we calculate the initial condition $u_0$ and  the source term $f.$ Note that $u_0\notin H_0^1(\Omega)\cap  H^3(\Omega)$ in this case. The results shown in Table~\ref{2dpdeAf41} are for the Alikhanov mixed method. 

\begin{table}[H]
\centering
\begingroup
\scriptsize
\setlength{\tabcolsep}{2.5pt}
\renewcommand{\arraystretch}{1.1}

\resizebox{\textwidth}{!}{
\begin{tabular}{|@{}lccccc|ccccc@{}|}
\hline
\multicolumn{6}{|c|}{$\alpha=0.4$} & \multicolumn{5}{c|}{$\alpha=0.6$} \\
\hline
$N$ & 4 & 8 & 16 & 32 & 64 & 4 & 8 & 16 & 32 & 64 \\
\hline\hline
$\Delta t^*$ & \multicolumn{5}{c|}{1.979e-1} & \multicolumn{5}{c|}{2.515e-1} \\
$\Delta t$ & 3.078e-1 & 1.976e-1 & 1.122e-1 & 5.980e-2 & 3.087e-2 &
             2.197e-1 & 1.287e-1 & 6.956e-2 & 3.615e-2 & 1.842e-2 \\
$E_{u,h}$ & 6.223e-2 & 1.784e-2 & 4.768e-3 & 1.217e-3 & 3.091e-4 &
            5.758e-2 & 1.668e-2 & 4.481e-3 & 1.146e-3 & 2.915e-4 \\
$R_{u,h}$ & - & 1.80 & 1.90 & 1.97 & 1.98 &
            - & 1.79 & 1.90 & 1.97 & 1.98 \\
$R_{u,\Delta t}$ & - & 2.82 & 2.33 & 2.17 & 2.07 &
                   - & 2.32 & 2.14 & 2.08 & 2.03 \\
$E_{\sigma,h}$ & 1.588e-1 & 6.266e-2 & 1.737e-2 & 4.515e-3 & 1.159e-3 &
                 1.332e-1 & 5.390e-2 & 1.513e-2 & 3.954e-3 & 1.018e-3 \\
$R_{\sigma,h}$ & - & 1.34 & 1.85 & 1.94 & 1.96 &
                 - & 1.30 & 1.83 & 1.94 & 1.96 \\
$R_{\sigma,\Delta t}$ & - & 2.10 & 2.27 & 2.14 & 2.06 &
                        - & 1.69 & 2.07 & 2.05 & 2.01 \\
\hline\hline
\multicolumn{6}{|c|}{$\alpha=0.8$} & \multicolumn{5}{c|}{$\alpha=0.99$} \\
\hline
$N$ & 4 & 8 & 16 & 32 & 64 & 4 & 8 & 16 & 32 & 64 \\
\hline\hline
$\Delta t^*$ & \multicolumn{5}{c|}{3.769e-1} & \multicolumn{5}{c|}{5.941e-1} \\
$\Delta t$ & 1.580e-1 & 8.800e-2 & 4.634e-2 & 2.377e-2 & 1.204e-2 &
             1.153e-1 & 6.226e-2 & 3.229e-2 & 1.644e-2 & 8.292e-3 \\
$E_{u,h}$ & 5.365e-2 & 1.570e-2 & 4.234e-3 & 1.085e-3 & 2.755e-4 &
            5.052e-2 & 1.490e-2 & 4.032e-3 & 1.035e-3 & 2.624e-4 \\
$R_{u,h}$ & - & 1.77 & 1.89 & 1.96 & 1.98 &
            - & 1.76 & 1.89 & 1.96 & 1.98 \\
$R_{u,\Delta t}$ & - & 2.10 & 2.04 & 2.04 & 2.01 &
                   - & 1.98 & 1.99 & 2.01 & 2.00 \\
$E_{\sigma,h}$ & 1.122e-1 & 4.664e-2 & 1.324e-2 & 3.480e-3 & 8.962e-4 &
                 9.581e-2 & 4.086e-2 & 1.173e-2 & 3.096e-3 & 7.977e-4 \\
$R_{\sigma,h}$ & - & 1.27 & 1.82 & 1.93 & 1.96 &
                 - & 1.23 & 1.80 & 1.92 & 1.96 \\
$R_{\sigma,\Delta t}$ & - & 1.50 & 1.96 & 2.00 & 1.99 &
                        - & 1.38 & 1.90 & 1.97 & 1.98 \\
\hline
\end{tabular}
}

\caption{Error $E_{u,h}$, $E_{\boldsymbol{\sigma},h}$ and rate of convergence 
$R_{u,h}$, $R_{u,\Delta t}$, $R_{\boldsymbol{\sigma},h}$ and $R_{\boldsymbol{\sigma},\Delta t}$ 
of the proposed method for Example~\ref{VariableceffPDE2}.}
\label{2dpdeAf41}

\endgroup
\end{table}
\end{example}

\begin{remark}
    In Example~\ref{VariableceffPDE2}, we achieve the optimal rate of convergence by strategically selecting $u_0 \in H_0^1(\Omega) \cap H^2(\Omega)$. To fully grasp this outcome, it is essential to explore the underlying theory.
    
\end{remark}
%
\section{Conclusion}\label{section7}
%

We introduce a non-uniform Alikhanov mixed finite element method for a class of time-fractional Allen-Cahn equations. The error estimates of these methods are derived for the considered problem using a modified discrete fractional Gr\"{o}nwall inequality. A global second-order error estimate with respect to the $L^{2}$-norm is obtained for the solution as well as flux with initial data $u_0\in H_0^1(\Omega) \cap H^{3+\epsilon}(\Omega)$ with $0<\epsilon\leq1$, up to a factor of $\log_e(N)$. These estimates remain valid as $\alpha\to 1^{-}$. Additionally, numerical experiments are conducted to validate our theoretical findings.
\section*{Acknowledgement}

Abhinav Jha acknowledges support from the Indian Institute of Technology Gandhinagar through grant No.~IP/IP/52016. 
Samir Karaa acknowledges support through the grant No.~IG/SCI/MATH/24/02. 
Aditi Tomar acknowledges support through the grant No.~IP/IP/52012.
\bibliographystyle{plain}
\bibliography{Time_Fractional}
\appendix

\section{Proof of  Lemma~\ref{DFGI} (Discrete fractional Gr\"{o}nwall inequality)}\label{appendix_sec_gronwall_b}
\begin{proof}
Apply the  definition of $D^{\alpha}_{t_{n-\nu}}$ in (\ref{dfgi1}) to get 
\begin{align*}
&\displaystyle{\sum_{k=1}^{j}}K^{j,k}_{1-\alpha} \left((v^k)^2-(v^{k-1})^2\right) \leq   \sum_{i=0}^{j} \lambda^j_{i}(v^{i})^2 + v^{j-\sigma}\xi^j + (\eta^j)^2 + (\zeta^j)^2, \quad 1\leq j \leq N.
\end{align*}  
After multiplying the above inequality by $P^{n,j}_{\alpha}$ and summing the index $j$ from $1$ to $n$, we obtain
\begin{align}\label{dfgieqn1}
\nonumber &\displaystyle{\sum_{j=1}^{n}P^{n,j}_{\alpha}\sum_{k=1}^{j}}K^{j,k}_{1-\alpha} \left((v^k)^2-(v^{k-1})^2\right) \leq\;  \sum_{j=1}^{n} P^{n,j}_{\alpha} \sum_{i=0}^{j} \lambda^j_{i}(v^{i})^2 \\
&\hspace{3cm}+ \sum_{j=1}^{n} P^{n,j}_{\alpha} v^{j-\nu}\xi^j + \sum_{j=1}^{n} P^{n,j}_{\alpha} (\eta^j)^2 + \sum_{j=1}^{n} P^{n,j}_{\alpha} (\zeta^j)^2,   
\end{align}
for $1\leq n \leq N$. An exchange of order of summation and Lemma~\ref{discretekernelproperties}\ref{itm:b} yields
\begin{align}
\label{dfgieqn2}\displaystyle{\sum_{j=1}^{n}P^{n,j}_{\alpha}\sum_{k=1}^{j}}K^{j,k}_{1-\alpha} \left((v^k)^2-(v^{k-1})^2\right) &= \sum_{k=1}^{n}\left(\sum_{j=k}^n P^{n,j}_{\alpha}K^{j,k}_{1-\alpha} \right)\left((v^k)^2-(v^{k-1})^2\right)\\
&= (v^n)^2 - (v^0)^2 , \quad 1\leq n\leq N.
\end{align}
Further, Lemma~\ref{discretekernelproperties}\ref{itm:c} implies
\begin{align}
\label{dfgieqn3}&\sum_{j=1}^{n} P^{n,j}_{\alpha}  \leq   \pi_A t_n^{\alpha}{\Gamma(1+\alpha)} \leq 2 \pi_A t_n^{\alpha} , \quad 1\leq n\leq N, 
\end{align}
where we have used $\Gamma(1+\alpha) \geq 2^{\alpha -1} \geq \frac{1}{2} ~ \forall ~\alpha \in [0,1]$. 
Thus, by using relations (\ref{dfgieqn2}) and (\ref{dfgieqn3}) in (\ref{dfgieqn1}), we obtain, for $1\leq n \leq N$,
\begin{align}\label{dfgieqn4}
& (v^n)^2  \leq (v^0)^2 + \sum_{j=1}^{n} P^{n,j}_{\alpha} \sum_{i=0}^{j} \lambda^j_{i}(v^{i})^2 + \sum_{j=1}^{n} P^{n,j}_{\alpha} v^{j-\sigma} \xi^j + 2 \pi_A t_{n}^{\alpha} \max_{1\leq j \leq n}(\eta^j)^2 \nonumber\\
&\hspace{1.5cm}+ \sum_{j=1}^{n} P^{n,j}_{\alpha} (\zeta^j)^2.
\end{align}  
Let us now define a non-decreasing finite sequence $\{\Phi_n \}_{n=1}^{N}$:
\begin{align*}
\Phi_n :=  v^0 + \max_{1 \leq j \leq n}\sum_{i=1}^j P^{j,i}_{\alpha}\xi^i + \sqrt{2 \pi_A} t_n^{\alpha}\max_{1 \leq j \leq n} \eta^j + \max_{1 \leq j \leq n} \left(\sum_{i=1}^{j} P^{j,i}_{\alpha} (\zeta^i)^2 \right)^{\frac{1}{2}},
\end{align*}
for $1\leq n \leq N$, and 
\begin{align*}
 F_n :=  C_\delta E_{\alpha}(C_\delta \pi_A \Lambda_n t_{n}^{\alpha})  
\end{align*}
derive the required estimate (\ref{dfgi2}), that is,
\begin{align}\label{dfgieqn5}
    v^n \leq F_n\Phi_n, \quad \forall ~ 1 \leq n \leq N
\end{align}
using mathematical induction in the following two steps:

\textbf{Step-I ($1\leq n \leq n_\alpha$):} As 
$$
\sum_{m=0}^{n}(C_\delta \pi_A \Lambda)^m k_{1+m\alpha}(t_n)\leq \sum_{m=0}^{\infty}(C_\delta \pi_A\Lambda)^m k_{1+m\alpha}(t_n)=:E_\alpha(C_\delta \pi_A\Lambda t_n^\alpha),\;1\leq n\leq N,
$$
it is enough to prove that
\begin{align}\label{dfgieqncaseI1}
    v^n \leq C_\delta \left(\sum_{m=0}^{n}(C_\delta \pi_A \Lambda)^m k_{1+m\alpha}(t_n)\right)\;\Phi_n \quad \forall ~ 1 \leq n\leq  n_\alpha.
\end{align}
For $n=1$, if $v^1 < v^0$ or $v^1 < \left(2 \pi_A t_1^\alpha\right)^{1/2}\eta^1$ or $v^1 < \left(P_{\alpha}^{1,1}(\zeta^1)^2\right)^{1/2}$, then the result (\ref{dfgieqncaseI1}) holds trivially. Otherwise, $v^0 \leq v^1$, $\left(2 \pi_A t_1^\alpha\right)^{1/2}\eta^1 \leq v^1$, $\left(P_{\alpha}^{1,1}(\zeta^1)^2\right)^{1/2} \leq v^1$, $v^{1-\sigma} = \sigma v^{0} + (1-\sigma)v^1 \leq v^1$ and hence, the inequality (\ref{dfgieqn4}) implies 
\begin{align}
\nonumber (v^1)^2
&\leq \left(v^0 + P^{1,1}_{\alpha}\lambda_{0}^{1} v^0 + P^{1,1}_{\alpha}\lambda_{1}^{1} v^1 +  P^{1,1}_{\alpha} \xi^1  + \left(2 \pi_A t_1^{\alpha}\right)^{1/2}\;\eta^1 + \left(P_{\alpha}^{1,1}(\zeta^1)^2\right)^{1/2} \right)v^1\\
\label{FDGI_Step-I1}&\leq (1+P^{1,1}_{\alpha}\lambda_{0}^{1})\Phi_1 v^1 +  P^{1,1}_{\alpha}\lambda_{1}^{1} (v^1)^2.
\end{align}
From condition (\ref{timecondition}) and Lemma~\ref{discretekernelproperties}\ref{itm:a}, we have $P^{1,1}_{\alpha}\lambda_{1}^{1} \leq 1/\delta,\;\delta>1.$ Moreover, using condition (\ref{timecondition}) along with Lemma~\ref{discretekernelproperties}\ref{itm:c} for $m=1$, we arrive at  $1+ P^{1,1}_{\alpha}\lambda_{0}^{1} \leq \sum_{m=0}^{1}(C_\delta \pi_A \Lambda)^mk_{1+m\alpha}(t_1)$. Thus, we obtain (\ref{dfgieqncaseI1}) for $n=1$ from (\ref{FDGI_Step-I1}).

Now, assume that (\ref{dfgieqncaseI1}) holds for $1 \leq k \leq m-1$, where $2\leq m \leq n_\alpha$. Then, there exists an integer $m_0$, $1 \leq m_0 \leq m-1$, such that $v^{m_0}= \max_{1 \leq j \leq m-1} v^j$. If $v^m \leq v^{m_0}$, the induction hypothesis, along with the properties $\Phi_n \leq \Phi_{n+1}$ and $k_{1+m\alpha}(s) \leq k_{1+m\alpha}(t)$ for $s \leq t$, yields the result (\ref{dfgieqncaseI1}) for $n = m$. Let $v^m > v^{m_0}.$ If $v^m < v^0$, or $v^m < \sqrt{2 \pi_A t_m^{\alpha}}\max_{1 \leq j \leq m}\eta^j$, or $v^m < \max_{1 \leq j \leq m}\left(\sum_{i=1}^jP_{\alpha}^{j,i}(\zeta^i)^2\right)^{\frac{1}{2}}$, then the inequality (\ref{dfgieqn5}) trivially holds for $n=m$. Otherwise, since $v^0 \leq v^m$, $\sqrt{2 \pi_A t_m^{\alpha}}\max_{1 \leq j \leq m}\eta^j \leq v^m$, $\max_{1 \leq j \leq m}\left(\sum_{i=1}^jP_{\alpha}^{j,i}(\zeta^i)^2\right)^{\frac{1}{2}} \leq v^m$, and 
$$
v^{m-\sigma} = \sigma v^{m-1} + (1-\sigma)v^m \leq v^m
$$ applying (\ref{dfgieqn4}) and condition (\ref{timecondition}) shows
\begin{align*}
(v^m)^2 & \leq \left(\Phi_m+ \sum_{j=1}^{m-1} P^{m,j}_{\alpha} \sum_{i=0}^j \lambda^j_{i}v^i  + P^{m,m}_{\alpha} \sum_{i=0}^{m-1} \lambda^m_{i}v^i + P^{m,m}_{\alpha}\lambda^m_{m}v^m  \right)v^m\\
 & \leq \left(\Phi_m+ \sum_{j=1}^{m-1} P^{m,j}_{\alpha} \sum_{i=0}^j \lambda^j_{i}v^i  + P^{m,m}_{\alpha} \sum_{i=0}^{m-1} \lambda^m_{i}v^i  \right)v^m + \frac{1}{\delta}(v^m)^2.  
\end{align*}
Thus, we obtain
\begin{align*}
v^m & \leq C_\delta\left(\Phi_m+ \sum_{j=1}^{m-1} P^{m,j}_{\alpha} \sum_{i=0}^j \lambda^j_{i}v^i  + P^{m,m}_{\alpha} \sum_{i=0}^{m-1} \lambda^m_{i}v^i  \right), 
\end{align*}
and by applying the induction hypothesis, along with the properties $\Phi_n \leq \Phi_{n+1}$ and $k_{1+m\alpha}(s) \leq k_{1+m\alpha}(t)$ for $s \leq t$, we arrive at
\begin{eqnarray}
\nonumber v^m & \leq & C_\delta\Bigg{(}\Phi_m+ C_\delta\Lambda \sum_{j=1}^{m-1} P^{m,j}_{\alpha} \sum_{i=0}^j (C_\delta \pi_A \Lambda)^i k_{1+i\alpha}(t_j)\Phi_j \nonumber \\
&&+C_\delta\Lambda P^{m,m}_{\alpha} \sum_{i=0}^{m-1} (C_\delta \pi_A \Lambda)^i k_{1+i\alpha}(t_{m-1})\Phi_{m-1}  \Bigg{)}\\
\label{FDGI_Step-I3}&\leq & C_\delta\left(1+ C_\delta\Lambda \sum_{j=1}^{m} P^{m,j}_{\alpha} \sum_{i=0}^{m-1} (C_\delta \pi_A \Lambda)^i k_{1+i\alpha}(t_j) \right)\Phi_m.\nonumber
\end{eqnarray}
From Lemma~\ref{discretekernelproperties}\ref{itm:c}, it follows that 
$$
C_\delta\Lambda \sum_{j=1}^{m} P^{m,j}_{\alpha} \sum_{i=0}^{m-1} (C_\delta \pi_A \Lambda)^i k_{1+i\alpha}(t_j) \leq  \sum_{i=1}^{m} (C_\delta \pi_A \Lambda)^i k_{1+i\alpha}(t_m)
$$
for $1\leq m\leq n_\alpha$. Applying this estimate in (\ref{FDGI_Step-I3}), we conclude that (\ref{dfgieqncaseI1}) holds for $n=m$. By mathematical induction, (\ref{dfgieqncaseI1}) and, as a consequence, (\ref{dfgieqn5}) hold for all $n$ such that $1 \leq n \leq n_\alpha$.

\textbf{Step-II ($n_\alpha +1\leq n \leq N$):} For $n=n_\alpha +1$, if $v^{n_\alpha +1}<\max_{0\leq j \leq n_\alpha} v^j$, or $v^{n_\alpha +1} < \sqrt{2 \pi_A t_{n_\alpha +1}^{\alpha}}\max_{1 \leq j \leq n_\alpha +1}\eta^j$, or $v^{n_\alpha +1} < \max_{1 \leq j \leq n_\alpha +1}\left(\sum_{i=1}^j P_{\alpha}^{j,i}(\zeta^i)^2 \right)^{\frac{1}{2}}$, then by applying the non-decreasing property of $E_\alpha(\cdot)$ and $\Phi_n$, the result (\ref{dfgieqn5}) follows for $n=n_\alpha +1$. Otherwise, $\max_{0\leq j \leq n_\alpha} v^j \leq v^{n_\alpha +1}$,  $ \sqrt{2 \pi_A t_{n_\alpha +1}^{\alpha}}\max_{1 \leq j \leq n_\alpha +1}\eta^j \leq v^{n_\alpha +1} $, $ \max_{1 \leq j \leq n_\alpha +1}\left(\sum_{i=1}^j P_{\alpha}^{j,i}(\zeta^i)^2\right)^{\frac{1}{2}}\leq v^{n_\alpha +1}$, and $v^{n_\alpha + 1-\sigma} = \sigma v^{n_\alpha} + (1-\sigma)v^{n_\alpha +1} \leq v^{n_\alpha +1}$ and therefore, by applying (\ref{dfgieqn4}) and condition (\ref{timecondition}), we obtain
\begin{eqnarray}
\nonumber (v^{n_\alpha +1})^2 & \leq & \Bigg{(}\Phi_{n_\alpha +1}+ \sum_{j=1}^{n_\alpha } P^{n_\alpha +1,j}_{\alpha} \sum_{i=0}^j \lambda^j_{i}v^i  + P^{n_\alpha +1,n_\alpha +1}_{\alpha} \sum_{i=0}^{n_\alpha } \lambda^{n_\alpha +1}_{i}v^i \nonumber \\
&&+ P^{n_\alpha +1,n_\alpha +1}_{\alpha}\lambda^{n_\alpha +1}_{n_\alpha +1}v^{n_\alpha +1}  \Bigg{)}v^{n_\alpha +1}\\
\label{FDGI_Step-I4} & \leq &\left(\Phi_{n_\alpha +1}+ \sum_{j=1}^{n_\alpha } P^{n_\alpha +1,j}_{\alpha} \sum_{i=0}^j \lambda^j_{i}v^i  + P^{n_\alpha +1,n_\alpha +1}_{\alpha} \sum_{i=0}^{n_\alpha } \lambda^{n_\alpha +1}_{i}v^i  \right)v^{n_\alpha +1} \nonumber \\
&&+ \frac{1}{\delta}(v^{n_\alpha +1})^2.\nonumber
\end{eqnarray}
Apply (\ref{dfgieqn5}) to (\ref{FDGI_Step-I4}) for $1 \leq n \leq n_\alpha$ to find that
\begin{eqnarray}
\nonumber v^{n_\alpha +1} &  \leq & C_\delta\Bigg{(}\Phi_{n_\alpha +1}+ C_\delta\Lambda \sum_{j=1}^{n_\alpha } P^{n_\alpha +1,j}_{\alpha} E_\alpha\left(C_\delta \pi_A  \Lambda t_j^\alpha\right)\Phi_j  \nonumber \\
&&+C_\delta\Lambda P^{n_\alpha +1,n_\alpha +1}_{\alpha} E_\alpha\left(C_\delta \pi_A  \Lambda t_{n_\alpha}^\alpha\right)\Phi_{n_\alpha}  \Bigg{)}\\
\label{FDGI_Step-I5}& \leq& C_\delta\left(1+ C_\delta\Lambda_{n_\alpha +1} \sum_{j=1}^{n_\alpha } P^{n_\alpha +1,j}_{\alpha} E_\alpha\left(C_\delta \pi_A  \Lambda_{n_\alpha +1} t_j^\alpha\right)   \right)\Phi_{n_\alpha +1}.\nonumber
\end{eqnarray}
Thus, an appeal to Lemma~\ref{discretekernelproperties}\ref{itm:d} in (\ref{FDGI_Step-I5}) yields the result (\ref{dfgieqn5}) for $n=n_\alpha +1$.

 Now, assume that (\ref{dfgieqn5}) holds for $1 \leq k \leq m-1$, where $n_{\alpha} + 2\leq m \leq N$. Then, there exists an integer $m_0$, $0 \leq m_0 \leq m-1$, such that $v^{m_0}= \max_{0 \leq j \leq m-1} v^j$. If $v^{m}<v^{m_0}$, or $v^{m} < \sqrt{2 \pi_A  t_{m}^{\alpha}}\max_{1 \leq j \leq m}\eta^j$, or $v^{m} < \max_{1 \leq j \leq m}\left(\sum_{i=1}^j P_{\alpha}^{j,i}(\zeta^i)^2 \right)^{\frac{1}{2}}$, then by applying the non-decreasing property of $E_\alpha(\cdot)$ and $\Phi_n$, the result (\ref{dfgieqn5}) follows for $n=m$. Otherwise, $v^{m_0}\leq v^{m}$, $ \sqrt{2 \pi_A  t_{m}^{\alpha}}\max_{1 \leq j \leq m}\eta^j\leq v^{m}$, and 
 $$
 \max_{1 \leq j \leq m}\left(\sum_{i=1}^j P_{\alpha}^{j,i}(\zeta^i)^2\right)^{\frac{1}{2}}\leq v^{m}
 $$
 , and hence, by applying (\ref{dfgieqn4}), condition (\ref{timecondition}), and the induction hypothesis, one can obtain 
 \begin{align}
\label{FDGI_Step-I6} v^{m} & \leq C_\delta\left(1+ C_\delta\Lambda_{m} \sum_{j=1}^{m-1 } P^{m,j}_{\alpha} E_\alpha\left(C_\delta \pi_A \Lambda_{m} t_j^\alpha\right)   \right)\Phi_{m}.
\end{align}
An appeal to Lemma~\ref{discretekernelproperties}\ref{itm:d} in (\ref{FDGI_Step-I6}) yields that (\ref{dfgieqn5}) holds for $n=m$. Thus, the principle of mathematical induction confirms that the result (\ref{dfgieqn5}) holds for all $n$, $1 \leq n \leq N$. Finally, using Lemma~\ref{discretekernelproperties}\ref{itm:g}, we can estimate $\Lambda_n$ in terms of $\mu_n=\Delta t_n/\Delta t_{n-1}$ as follows:
\begin{align*}
    & \Lambda_n \leq \Lambda \left(1+ \max\limits_{n_\alpha +1\leq j \leq n} \frac{2-\alpha}{\alpha} \mu_n^\alpha\right).
\end{align*}
This concludes the rest of the proof.
\end{proof}

\end{document}

%% file: packages.tex
\usepackage{multirow}
\usepackage{dsfont}
\usepackage{float}
\usepackage{verbatim}
\usepackage{amssymb}
\usepackage{amsmath, amsthm}
\usepackage{hyperref}
\usepackage{cleveref}
\usepackage{graphicx}
\usepackage{subfig}

\usepackage{enumitem}

\newcommand{\vertiii}[1]{\left\vert\kern-0.25ex\left\vert\kern-0.25ex\left\vert #1
\right\vert\kern-0.25ex\right\vert\kern-0.25ex\right\vert}

\crefname{section}{Section}{Sections}
\crefname{subsection}{subsection}{subsections}
\crefname{theorem}{Theorem}{Theorems}
\crefname{lemma}{Lemma}{Lemmas}
\crefname{proposition}{Proposition}{Propositions}
\crefname{corollary}{Corollary}{Corollaries}
\crefname{definition}{Definition}{Definitions}
\crefname{example}{Example}{Examples}
\crefname{remark}{Remark}{Remarks}
\crefname{table}{Table}{Tables}

\Crefname{figure}{Figure}{Figures}
\Crefname{equation}{}{}

\newtheorem{theorem}{Theorem}[section]
\newtheorem{lemma}[theorem]{Lemma}

\theoremstyle{definition}

\newtheorem{example}[theorem]{Example}

\theoremstyle{remark}
\newtheorem{remark}[theorem]{Remark}

\makeatletter
\newcommand{\namedlabel}[2]{%
  \begingroup
  #2%
  \def\@currentlabel{#2}%
  \phantomsection\label{#1}%
  \endgroup
}
\makeatother

\newcommand{\se}{\setcounter{equation}{0}}

